\definecolor{orange}{HTML}{d9944f}
\newtheorem{theorem}{Theorem}[section]
\newtheorem{corollary}[theorem]{Corollary}
\newtheorem{remark}[theorem]{Remark}
\newcommand{\AW}[1]{{\color{black}#1}}
\newcommand{\LP}[1]{{\color{black}#1}}
\newcommand{\MK}[1]{{\color{black}#1}}
\journal{Elsevier}
\begin{document}

\begin{frontmatter}



\title{Revisiting the Linear Chain Trick in epidemiological models: Implications of underlying assumptions for numerical solutions}

\author[label1]{Lena Pl\"otzke\hspace{0.5mm}\orcidlink{0000-0003-0440-1429}}
\ead{lena.ploetzke@dlr.de}
\author[label1]{Anna Wendler\hspace{0.5mm}\orcidlink{0000-0002-1816-8907}}
\author[label1]{Ren\'{e} Schmieding\hspace{0.5mm}\orcidlink{0000-0002-2769-0270}}
\author[label1,label2]{Martin J. Kühn\hspace{0.5mm}\orcidlink{0000-0002-0906-6984}}
\ead{martin.kuehn@dlr.de}

\affiliation[label1]{organization={Institute of Software Technology, Department of High-Performance Computing, German Aerospace Center},
            city={Cologne},
            country={Germany}}
\affiliation[label2]{organization={Life and Medical Sciences Institute and Bonn Center for Mathematical Life Sciences, University of Bonn},
            city={Bonn},
            country={Germany}}

\begin{abstract}
In order to simulate the spread of infectious diseases, many epidemiological models use systems of ordinary differential equations (ODEs) to describe the underlying dynamics. These models incorporate the implicit assumption, that the stay time in each disease state follows an exponential distribution. However, a substantial number of epidemiological, data-based studies indicate that this assumption is not plausible. One method to alleviate this limitation is to employ the Linear Chain Trick (LCT) for ODE systems, which realizes the use of Erlang distributed stay times. As indicated by data, this approach allows for more realistic models while maintaining the advantages of using ODEs.

In this work, we propose an advanced LCT \MK{SECIR-type} model incorporating eight infection states with demographic stratification. We review key properties of \MK{the corresponding} LCT model and demonstrate that predictions derived from a simple ODE-based model can be significantly distorted, potentially leading to wrong political decisions. Our findings demonstrate that the influence of distribution assumptions on the behavior at change points and on the prediction of epidemic peaks is substantial, while the assumption has no effect on the final size of the epidemic. \MK{With respect to prior findings in literature, we demonstrate that the influence of the number of subcompartments on the timing and size of the epidemic peak is nontrivial and that a general statement cannot be obtained. We, then, show how these age-resolved LCT SECIR-type models capture the spread of SARS-CoV-2 in Germany in 2020. Eventually, we study the implications on the time-to-solution for different LCT models using fixed and adaptive step-size Runge-Kutta methods and provide computational performance for these models in the MEmilio software framework, also using distributed memory parallelism to speed up ensemble runs.}
\end{abstract}


\begin{highlights}
\item Generalization of statements on incorrect predictions for peak size and timing with simple ordinary differential equation-based models
\item Assessment of the impact of underlying distribution assumptions at change points
\item Development of an age-resolved SECIR-type model using the Linear Chain Trick to allow for Erlang distributed stay times
\item Validation of our model with demographic resolution for a COVID-19-inspired scenario
\item Assessment of the performance of Linear Chain Trick models \MK{in the MEmilio software framework}
\end{highlights}

\begin{keyword}
Ordinary differential equations \sep Exponential distribution \sep Linear Chain Trick \sep Gamma Chain Trick \sep Erlang distribution \sep Infectious disease modeling \sep Numerical solution 
\MSC 34A34 \sep 65L06 \sep 65Z05 \sep 92D30
\end{keyword}

\end{frontmatter}
\section{Introduction}

Infectious diseases have long been a major challenge to the health and well-being of society. Despite improvements in hygiene standards and the development of vaccines and other medical breakthroughs, existing and emerging infectious diseases remain a major global health concern~\cite{hethcote_mathematics_2000,daszak_workshop_2020}.
A recent example is the COVID-19 pandemic, caused by the coronavirus SARS-CoV-2, which has resulted in $14.9$ million excess deaths in 2020 and 2021~\cite{who_statistics_2023}.

Mathematical models are a crucial tool for understanding the dynamics of infectious disease spread and analyzing possible nonpharmaceutical interventions (NPIs)~\cite{wallinga_how_2007}. Numerous methodological approaches, using mathematical models, are available for the prediction of the spread of infectious diseases. Among these approaches, those based on ordinary differential equations (ODE) are the most prevalent in the scientific literature~\cite{hurtado_generalizations_2019}. Simple ODE-based models can be generalized or extended differently to achieve a more realistic representation of the infection dynamics, e.g., to ODE-based metapopulation models~\cite{kuhn_assessment_2021,pei_differential_2020,chen_compliance_2021,levin_effects_2021,liu_modelling_2022,zunker_novel_2024} or integral-based differential equations~\cite{medlock_integro-differential-equation_2004,messina_non-standard_2022,wendler2024nonstandardnumericalschemenovel}. Substantially different approaches rely on the modeling of individuals and are given by agent-based~\cite{collier_parallel_2013,willem_optimizing_2015,bershteyn_implementation_2018,Kerkmann_ABM_24} or even use a hybrid combination of metapopulation and agent-based models~\cite{bradhurst_hybrid_2015,hunter_hybrid_2020,bicker_hybrid_2025}. While models of artificial intelligence (AI) have also been directly applied to disease dynamics data, also AI surrogate models for ABMs or spatially resolved metapopulation models have been proposed~\cite{robertson_bayesian_2024,schmidt_surrogates_2024}. 

ODE-based models are known for their simple formulation, well established mathematical analysis and straightforward implementation and numerical solution~\cite{hurtado_generalizations_2019}. In these simple ODE-based models, each disease state corresponds to one ODE. The formulation using linear transition rates between two states leads to the implicit assumption of exponentially distributed stay times~\cite{donofrio_mixed_2004,lloyd_realistic_2001}. From the epidemiological application, this model assumption is considered unrealistic~\cite{donofrio_mixed_2004,lloyd_realistic_2001,wearing_appropriate_2005,feng_epidemiological_2007,krylova_effects_2013,wang_evaluations_2017}.
One solution is to use models based on integro-differential equations (IDE), which allow a flexible choice of the distributions of the stay times in the disease states, see e.g.~\cite{wendler2024nonstandardnumericalschemenovel,messina_non-standard_2022}. This approach has already been presented by Kermack and McKendrick in~\cite{kermack_contribution_1927}, which is, however, mostly cited for its simplified ODE formulation~\cite{breda_formulation_2012}. From our understanding, this is a result of the IDE formulations being mathematically more challenging to analyze, formulate, and implement in software. 

To bypass the complexity of IDE formulations, a method using linear chains of substates in ODE formulations was developed~\cite{macdonald_time_1978}. This concept, called the Linear Chain Trick (LCT), generates Erlang distributed stay times for the initial compartments~\cite{hurtado_generalizations_2019,macdonald_time_1978}. As Erlang distributed stay times are considered to be more realistic than exponentially distributed stay times, an alternative, situated between IDE-based and simple ODE-based formulations, is given.

In this work, we propose a model using the LCT \LP{w}ith eight disease states including exposed, pre- and asymptomatic, symptomatic, severe, and critical states, all stratified by age. It is a versatile model for early epidemic states of, e.g., respiratory diseases. \LP{Research on LCT models has been conducted by several authors, including ~\cite{hurtado_generalizations_2019,feng_mathematical_2016,champredon_equivalence_2018,krylova_effects_2013,wearing_appropriate_2005}. In order to incorporate more realistic distribution assumptions, the approach has been applied in different settings. In particular, LCT models were used in recent studies to model the dynamics of the COVID-19 pandemic~\cite{contento_integrative_2023,rozhnova_model-based_2021,blyuss_effects_2021,overton_epibeds_2022,birrell_real-time_2021}.} However, most of these models were not age-resolved or considered less disease states. Our objective is to revisit the LCT with a generic model to review corresponding, partially contradictory statements made in the literature and to also consider implications for the numerical solution process.

This paper is structured as follows.
In~\cref{sec:model}, we introduce our detailed age-resolved LCT-based model and provide a description of the model parameters. Subsequently, in~\cref{sec:properties}, we revise some mathematical properties of the Erlang distribution and, more precisely, of our model. Then, in~\cref{sec:numerics}, we investigate the model behavior in detail by numerical experiments. In particular, we examine the influence of the distribution assumption on the behavior at change points and on the size and timing of epidemic peaks, as well as on the final size of the epidemic. We then demonstrate the significance of using an age-resolved model and apply our model to a scenario based on the spread of COVID-19 in Germany. \MK{Furthermore, we conduct a run time and performance study for various LCT model realizations} before, eventually, providing discussion and conclusion.

\section{An age-resolved SECIR-type Linear Chain Trick model}\label{sec:model}
In this section, we present a detailed SECIR-type model with eight compartments realizing Erlang distributed stay times and a stratification by age. We extend a version of the ODE model presented in~\cite{kuhn_assessment_2021} by the Linear Chain Trick concept. The version of the ODE model that serves as the foundation for this is also presented in~\cite[Appendix~A]{wendler2024nonstandardnumericalschemenovel} (without age resolution). As our model is formulated using the LCT, we denote the model \textit{LCT-SECIR model} or simply \textit{LCT model}.

In the model, individuals are classified according to their disease state and assigned to a specific compartment. The compartment \textit{Susceptible} ($S$) is used for individuals who are susceptible to infection with the considered disease and have a default immune protection; \textit{Exposed} ($E$) for individuals in their latent period that are infected but not yet infectious and \textit{Carrier} ($C$) for people that are infectious but do not show symptoms, which may be pre- or asymptomatic. We use \textit{Infected} ($I$) for people who are infectious and mildly symptomatic; \textit{Hospitalized} ($H$) for people suffering from severe symptoms; In \textit{Intensive Care Unit} ($U$); \textit{Recovered} ($R$) for people that are immune to any future infection and \textit{Dead} ($D$) for people that died from the disease. We define the set of compartments as $\mathcal{Z}:=\{S,E,C,I,H,U,R,D\}$. As infectious disease parameters can be highly dependent on age, see e.g.~\cite{kuhn_assessment_2021}, we further divide the population in $m\in\mathbb{N}$ different sociodemographic groups. While one could also stratify the population according to gender or education, we, here, use a stratification into age groups. We use the notation $Z_i(t)$ for the number of people of age group $i\in\{1,\dots, m\}$ with the disease state $Z\in\mathcal{Z}$ at simulation time $t\in\mathbb{R}$. 

To realize Erlang distributed stay times with an ODE model, we divide the compartments $Z\in\mathcal{A}:=\{E,C,I,H,U\}$ into $n_Z\in\mathbb{N}$ subcompartments. The compartments $\{S,R,D\}$ are not divided as they serve as initial or absorbing states. According to the LCT, replacing a compartment by a chain of subcompartments with linear transition rates leads to an Erlang distributed stay time in the compartment itself. A more detailed examination of this topic will be provided in~\cref{sec:properties}. We allow different numbers of subcompartments for different age groups, which is denoted by $n_{Z,i}\in\mathbb{N}$ for $Z\in\mathcal{A}$ and $i\in\{1,\dots,m\}$. For compartment $Z\in\mathcal{A}$, the number of people of age group $i\in\{1,\dots,m\}$ in subcompartment $j\in\{1,\dots,n_{Z,i}\}$ at simulation time $t$ is denoted by $Z_{i,j}(t)$.
A schematic presentation of the model including subcompartments, omitting age group visualization, is depicted in~\cref{fig:LCTSECIR}.

\begin{figure}[tb]
\scalebox{.56}{
\centering
 \newcommand{\EQPLT}[1]{#1} 
\tikzstyle{roundbox}=[draw, fill=orange, rounded corners=10pt, minimum size=2.4cm, text width = 2.2cm, align = center, minimum height = 1.5cm]
	\tikzstyle{arrow}=[->, black, thick, text = black]
\begin{tikzpicture}[auto]
	\node [roundbox] (s) {Susceptible \\ $\mathbf{S}$};

    \node [roundbox, right = 4cm of s] (e1) {$\mathbf{E_{1}}$};
    \node [roundbox, right =  1cm of  e1] (e2) {$\mathbf{E_{2}}$};
    \node [roundbox,fill=none,draw=white, right = 0.5cm of  e2](edots){$\mathbf{\dots}$};
    \node [roundbox, right = 0.5cm of edots](en)  {$\mathbf{E_{n_E}}$};
    \node [roundbox, dashed ,fill=none, fit=(e1.north west) (en.south east),label=above:{Exposed}] (e) {};

	\draw [arrow] (s) -- node [above, font=\small] {\EQPLT{$ \phi\, \rho\,\frac{\xi_{C}\sum_{j=1}^{n_{C}}C_{j}+\xi_{I}\sum_{j=1}^{n_{I}}I_{j}}{N-D}$}}(e1);
    \draw [arrow] (e1) -- node [above, font=\large] {\EQPLT{$ \frac{n_E}{T_E}$}}(e2);
    \draw [arrow] (e2) -- node [above, font=\large] {\EQPLT{$ \frac{n_E}{T_E}$}}(edots);
    \draw [arrow] (edots) -- node [above, font=\large] {\EQPLT{$ \frac{n_E}{T_E}$}}(en);

    \node [roundbox, right = 1.5cm of en,fill=red!60] (c1) {$\mathbf{C_{1}}$};
    \node [roundbox,fill=none,draw=white, right = 0.8cm of  c1](cdots){$\mathbf{\dots}$};
    \node [roundbox, right = 0.8cm of cdots,fill=red!60](cn)  {$\mathbf{C_{n_{C}}}$};
    \node [roundbox, dashed ,fill=none, fit=(c1.north west) (cn.south east),label=above:{Carrier}] (c) {};

	\draw [arrow] (en) -- node [above, font=\large] {\EQPLT{$ \frac{n_E}{T_E}$}}(c1);
    \draw [arrow] (c1) -- node [above, font=\large] {\EQPLT{$\frac{n_{C}}{T_{C}}$}}(cdots);
    \draw [arrow] (cdots) -- node [above, font=\large] {\EQPLT{$\frac{n_{C}}{T_{C}}$}}(cn);

    \node [roundbox, below = 1.25cm of cn,fill=red!60] (i1) {$\mathbf{I_{1}}$};
    \node [roundbox,fill=none,draw=white, below = 0.75cm of  i1](idots){$\mathbf{\dots}$};
    \node [roundbox,below = 0.75cm of idots,fill=red!60](in)  {$\mathbf{I_{n_{I}}}$};
    \node [roundbox, dashed ,fill=none, fit=(i1.north west) (in.south east),label=left:{Infected}] (c) {};

	\draw [arrow] (cn) -- node [left, font=\large] {\EQPLT{$ \frac{\mu_{C}^{I}\,n_{C}}{T_{C}}$}}(i1);
    \draw [arrow] (i1) -- node [left, font=\large] {\EQPLT{$ \frac{n_{I}}{T_{I}}$}}(idots);
    \draw [arrow] (idots) -- node [left, font=\large] {\EQPLT{$ \frac{n_{I}}{T_{I}}$}}(in);

    \node [roundbox, below = 1.25cm of in] (h1) {$\mathbf{H_{1}}$};
    \node [roundbox,fill=none,draw=white, left = 1.25cm of  h1](hdots){$\mathbf{\dots}$};
    \node [roundbox, left =1.25cm of hdots](hn)  {$\mathbf{H_{n_{H}}}$};
    \node [roundbox, dashed ,fill=none, fit=(hn.north west) (h1.south east),label=above:{Hospitalized}] (h) {};

	\draw [arrow] (in) -- node [left, font=\large] {\EQPLT{$ \frac{\mu_{I}^{H}\,n_{I}}{T_{I}}$}}(h1);
    \draw [arrow] (h1) -- node [above, font=\large] {\EQPLT{$ \frac{n_{H}}{T_{H}}$}}(hdots);
    \draw [arrow] (hdots) -- node [above, font=\large] {\EQPLT{$ \frac{n_{H}}{T_{H}}$}}(hn);

    \node [roundbox, left = 2.4cm of hn] (u1) {$\mathbf{U_{1}}$};
    \node [roundbox,fill=none,draw=white, left = 1.25cm of  u1](udots){$\mathbf{\dots}$};
    \node [roundbox, left = 1.25cm of udots](un)  {$\mathbf{U_{n_{U}}}$};
    \node [roundbox, dashed ,fill=none, fit=(un.north west) (u1.south east),label=above:{Intensive Care Unit}] (u) {};

	\draw [arrow] (hn) -- node [below, font=\large] {\EQPLT{$ \frac{\mu_{H}^{U}\,n_{H}}{T_{H}}$}}(u1);
    \draw [arrow] (u1) -- node [above, font=\large] {\EQPLT{$ \frac{n_{U}}{T_{U}}$}}(udots);
    \draw [arrow] (udots) -- node [above, font=\large] {\EQPLT{$ \frac{n_{U}}{T_{U}}$}}(un);

   \node [roundbox, below = 3.6cm of s] (r) {Recovered \\ $\mathbf{R}$};
	\draw [arrow] (cn) -- node [below right, font=\large] {\EQPLT{$ \frac{\left(1-\mu_{C}^{I}\right)\,n_{C}}{T_{C}}$}}(r);
     \draw [arrow] (in) -- node [above, font=\large] {\EQPLT{$ \frac{\left(1-\mu_{I}^{H}\right)\,n_{I}}{T_{I}}$}}(r);
     \draw [arrow] (hn.north) -- node [above, font=\large] {\EQPLT{$ \frac{\left(1-\mu_{H}^{U}\right)\,n_{H}}{T_{H}}$}}(r);
     \coordinate[xshift=-1mm,yshift=+1mm] (faker) at (r.south east);
     \draw [arrow] (un.north) -- node [below left, font=\large] {\EQPLT{$\frac{\left(1-\mu_{U}^{D}\right)\,n_{U}}{T_{U}}$}}(faker);
    
    \node [roundbox, below = 8.6cm of s] (d) {Dead \\ $\mathbf{D}$};
    \draw [arrow] (un) -- node [below, font=\large] {\EQPLT{$ \frac{\mu_{U}^{D}\,n_{U}}{T_{U}}$}}(d);

\end{tikzpicture}}
\caption{\textbf{Structure of the LCT-SECIR model, omitting age groups visualization.} Schematic illustration of the possible transitions between compartments and subcompartments according to the LCT-SECIR model. For the sake of clarity, we have omitted the indices for age groups. The subcompartments, in which individuals are infectious and can infect people from the Susceptible compartment, are highlighted in red. A description of the model parameters can be found in~\cref{tab:parameters}.}
\label{fig:LCTSECIR}
\end{figure}\normalsize 
\begin{table}
    \centering
    \begin{tabular}{c | c}
     \hline
     Parameter &  Description\\
     \hline
     $Z_{i,j}(t)$ &  Number of people of age group $i$ in subcompartment $j$ of compartment $Z$\\& at simulation time $t$. \\
      $n_{Z,i}$ & Number of subcompartments of the compartment $Z$ of age group $i$.\\
      $\phi_{i,k}(t)$ & Average number of daily contacts of a person of age group $i$\\& with persons from group $k$ at simulation time $t$. \\
     $\rho_i(t)$ & Transmission risk on contact of age group $i$ at simulation time $t$.\\
    $\xi_{C,i}(t)$ & Proportion of Carrier individuals of age group $i$ not isolated at simulation time $t$.\\
    $\xi_{I,i}(t)$& Proportion of Infected individuals of age group $i$ not isolated at simulation time $t$.\\
    $N_i(t)$ & Total number of living people of age group $i$ at simulation time $t$. \\
      $T_{Z,i}$ & Average stay time in days in compartment $Z$ of individuals of age group $i$.\\
     $\mu_{Y_i}^{Z_i}$ & Expected probability of transition from compartment $Y$ to $Z$ of age group $i$.
     \end{tabular}
\caption{\textbf{Description of the parameters used in the LCT-SECIR model.}}\label{tab:parameters}
\end{table}

Finally, we define the model equations of the LCT-SECIR model for each age group $i\in\{1,\dots,m\}$ as
\begin{align} 
  \label{eq:LCTSECIR}
    S_i'(t) &=- S_i(t) \,\rho_i(t)\,\sum_{k=1}^{m}\frac{1}{N_k(t)}\,\phi_{i,k}(t)\, \Big( \xi_{C,k}(t) \, C_{k,*}(t)+\xi_{I,k}(t)\, I_{k,*}(t)\Big)
    \nonumber\\
    E_{i,1}'(t) &= S_i(t) \,\rho_i(t)\,\sum_{k=1}^{m}\frac{1}{N_k(t)}\,\phi_{i,k}(t)\, \Big( \xi_{C,k}(t) \, C_{k,*}(t)+\xi_{I,k}(t)\, I_{k,*}(t)\Big)
            - \frac{n_{E,i}}{T_{E,i}}\, E_{i,1}(t)
            \nonumber\\
    E_{i,j}'(t) &= \frac{n_{E,i}}{T_{E,i}}\, E_{i,j-1}(t)
            - \frac{n_{E,i}}{T_{E,i}}\, E_{i,j}(t) \hspace{7em} \text{ for }j\in\{2,\dots,n_{E,i}\}
            \nonumber\\[3pt]
    C_{i,1}'(t) &= \frac{n_{E,i}}{T_{E,i}}\, E_{i,n_{E,i}}(t)
            -\frac{n_{C,i}}{T_{C,i}}\, C_{i,1}(t) 
            \nonumber\\[3pt]
    C_{i,j}'(t) &= \frac{n_{C,i}}{T_{C,i}}\, C_{i,j-1}(t)
            - \frac{n_{C,i}}{T_{C,i}}\, C_{i,j}(t)\hspace{7.1em}  \text{ for }j\in\{2,\dots,n_{C,i}\}
            \nonumber\\[3pt]
    I_{i,1}'(t) &= \mu_{C_i}^{I_i}\,\frac{n_{C,i}}{T_{C,i}}\,             C_{i,n_{C,i}}(t) 
            -\frac{n_{I,i}}{T_{I,i}}\, I_{i,1}(t)
            \nonumber\\[3pt]
    I_{i,j}'(t) &= \frac{n_{I,i}}{T_{I,i}}\, I_{i,j-1}(t)
            - \frac{n_{I,i}}{T_{I,i}}\, I_{i,j}(t) \hspace{8.01em} \text{ for }j\in\{2,\dots,n_{I,i}\}
            \\[3pt]
    H_{i,1}'(t)&= \mu_{I_i}^{H_i}\,\frac{n_{I,i}}{T_{I,i}}\,                I_{i,n_{I,i}}(t)
            - \frac{n_{H,i}}{T_{H,i}}\,  H_{i,1}(t)
            \nonumber\\[3pt]
    H_{i,j}'(t) &= \frac{n_{H,i}}{T_{H,i}}\, H_{i,j-1}(t)
            - \frac{n_{H,i}}{T_{H,i}}\, H_{i,j}(t) \hspace{6.66em}\text{ for }j\in\{2,\dots,n_{H,i}\}
            \nonumber\\[3pt]
    U_{i,1}'(t)&=  \mu_{H_i}^{U_i}\,\frac{n_{H,i}}{T_{H,i}} \,            H_{i,n_{H,i}}(t)
            -\frac{n_{U,i}}{T_{U,i}}\, U_{i,1}(t)
            \nonumber\\[3pt]
    U_{i,j}'(t) &= \frac{n_{U,i}}{T_{U,i}}\, U_{i,j-1}(t)
            - \frac{n_{U,i}}{T_{U,i}}\, U_{i,j}(t)\hspace{7.16em} \text{ for }j\in\{2,\dots,n_{U,i}\}
            \nonumber\\[3pt]
    R_i'(t)&=  \left(1-\mu_{C_i}^{I_i}\right)\frac{n_{C,i}}{T_{C,i}}\,C_{i,n_{C,i}}(t)
            + \left(1-\mu_{I_i}^{H_i}\right)\frac{n_{I,i}}{T_{I,i}}\, I_{i,n_{I,i}}(t)
            \nonumber\\*
            &\quad+ \left(1-\mu_{H_i}^{U_i}\right)\frac{n_{H,i}}{T_{H,i}}\, H_{i,n_{H,i}}(t)
            + \left(1-\mu_{U_i}^{D_i}\right)\frac{n_{U,i}}{T_{U,i}}\, U_{i,n_{U,i}}(t) 
            \nonumber\\[1pt]
   D_i'(t)&=  \mu_{U_i}^{D_i}\,\frac{n_{U,i}}{T_{U,i}} \, U_{i,n_{U,i}}(t) \nonumber
\end{align}
for $t\geq0$, where
\begin{align*}
Z_{i,*}(t):=\sum_{j=1}^{n_{Z,i}}Z_{i,j}(t)
\end{align*}
is the total number of individuals of age group $i$ in compartment $Z\in \mathcal{A}$. Hereby, $N_i(t):=\sum_{Z\in\mathcal{Z}\setminus D}Z_i(t)$ is the total number of living people of age group $i$ where $N_i^+:=N_i(t)+D_i(t)$ is constant in time. The model excludes birth and disease unrelated death events. The parameters $\rho_i(t)\in[0,1]$ refer to the average transmission risk on a contact of age group $i$ at simulation time $t$. The entry $\phi_{i,k}(t)\geq0$ in the contact matrix $\phi(t)$ is the average number of daily contacts that a person of age group $i$ has with people belonging to age group $k$. Additionally, $\xi_{C,i}(t)\in[0,1]$ and $\xi_{I,i}(t)\in[0,1]$ represent the average proportion of Carrier and Infected individuals, respectively, of age group $i$ that are not isolated at simulation time $t$. The parameters $T_{Z,i}\geq 0$ depict the average stay time in days in compartment $Z\in\mathcal{A}$ for each age group $i$. The last set of remaining parameters to be described is $\mu_{Y_i}^{Z_i}\in[0,1]$, which is the expected probability for individuals of age group $i$ to move from disease state $Y\in\mathcal{Z}$ to a consecutive state $Z\in\mathcal{Z}$. Note that the parameter is only defined if a transition from $Y$ to $Z$ is possible according to~\cref{fig:LCTSECIR}. An overview of the model parameters can be found in~\cref{tab:parameters}.

\section{Properties of the LCT model}\label{sec:properties}
This section presents a review of the most significant properties of the proposed LCT-SECIR model~\eqref{eq:LCTSECIR}. The objective is to establish a foundation for explaining the differences in the simulation results that will be discussed in the following section. The findings are not exclusive to our model; they can be applied to general uses of the LCT.

We begin by demonstrating that the model formulation indeed results in Erlang distributed stay times in each compartment. The proofs can be done with elementary calculus, stochastics and solution theory for differential equations. For detailed proofs of the theorems presented, see, e.g.,~\cite{plotzke_ma_2023}. In order to provide a comprehensive overview, we recall the defining characteristics of an Erlang distribution.
\begin{remark}
   The probability density function of the Erlang distribution is given by
\begin{align*}
    f_{\lambda,\alpha}(x)=\begin{cases}
        \frac{\lambda^{\alpha}}{(\alpha-1)!}\, x^{\alpha-1}\,e^{-\lambda\,x} &\text{ for }x\geq0\\
        0&\text{ for }x<0
    \end{cases}
\end{align*} with a rate parameter $\lambda\in\mathbb{R}^{+}$ and an integer shape parameter $\alpha\in\mathbb{N}$. The cumulative distribution function of the Erlang distribution is \begin{align*}
    F_{\lambda,\alpha}(x)&=1-e^{-\lambda\, x}\sum_{j=1}^{\alpha} \frac{(\lambda \, x)^{j-1}}{(j-1)!}=1-\sum_{j=1}^{\alpha} \frac{1}{\lambda}\,f_{\lambda,j}(x)
\end{align*} for $x\geq0$.
\end{remark}
\begin{remark}
The Erlang distribution is a special case of the gamma distribution, with the restriction that only integer shape parameters are allowed. Therefore, the Linear Chain Trick is also sometimes called Gamma Chain Trick.
\end{remark}
As a first step to show that the overall stay time in a compartment is Erlang distributed, we examine the stay time distribution in each subcompartment.
\begin{theorem}\label{thm:subcompartments}
For each compartment $Z\in\mathcal{A}$ and age group $i\in\{1,\dots,m\}$, let $X_{Z,i,j}$ be the random variable describing the stay time in subcompartment $Z_{i,j}$ for each $j\in\{1,\dots,n_{Z,i}\}$. 

\noindent Then, the random variable $X_{Z,i,j}$ is exponentially distributed with parameter $\frac{n_{Z,i}}{T_{Z,i}}$ for each $j\in\{1,\dots,n_{Z,i}\}$.
\end{theorem}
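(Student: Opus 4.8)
The plan is to reduce the statement to the elementary fact that a compartment which is depleted at a constant per-capita rate $\lambda$ has an exponentially distributed residence time with parameter $\lambda$. Fix $Z\in\mathcal{A}$, an age group $i$, and an index $j\in\{1,\dots,n_{Z,i}\}$, and abbreviate $\lambda:=\frac{n_{Z,i}}{T_{Z,i}}$. Reading off the equation for $Z_{i,j}$ in~\eqref{eq:LCTSECIR}, the total loss term is always exactly $-\lambda\,Z_{i,j}(t)$: this holds for the first subcompartment, for the interior ones, and also for the terminal subcompartment $Z_{i,n_{Z,i}}$, where the outflow merely branches into several successor states with weights summing to one (for instance $\mu_{C_i}^{I_i}$ and $1-\mu_{C_i}^{I_i}$), so that the branching probabilities do not change the total per-capita departure rate. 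Thus in every case individuals leave $Z_{i,j}$ at the constant rate $\lambda$, independently of $t$ and of the inflow.

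First I would isolate the residence-time dynamics from the inflow by tracking a single cohort that enters $Z_{i,j}$ at a reference instant, normalized to $t=0$, and letting $g(t)$ denote the fraction of that cohort still present at time $t$. Since departures occur at the constant per-capita rate $\lambda$ and no further inflow is counted, $g$ solves
\begin{align*}
  g'(t)=-\lambda\,g(t),\qquad g(0)=1,
\end{align*}
whose unique solution is $g(t)=e^{-\lambda t}$. By construction $g$ is the survival function of the stay time, $g(t)=\mathbb{P}(X_{Z,i,j}>t)$, so the cumulative distribution function is $\mathbb{P}(X_{Z,i,j}\leq t)=1-e^{-\lambda t}$ for $t\geq0$; this is exactly the exponential distribution with parameter $\lambda=\frac{n_{Z,i}}{T_{Z,i}}$, and differentiating recovers the density $\lambda e^{-\lambda t}$. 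Since $j$ was arbitrary, the claim follows for all $j\in\{1,\dots,n_{Z,i}\}$.

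The main obstacle is conceptual rather than computational: one must justify the passage from the deterministic, population-level ODE to the individual-level stochastic residence time. The crucial point is that a constant linear loss coefficient encodes a constant hazard rate (memorylessness), and that suppressing the inflow is legitimate precisely because the outflow is governed by a per-capita rate that does not depend on the current inflow, leaving the autonomous decay equation above. Once this identification of the linear rate with a constant hazard is made precise, the computation is immediate.
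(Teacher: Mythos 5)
Your proposal is correct and follows essentially the same route as the paper: both arguments reduce the claim to the initial value problem $\gamma'(t)=-\tfrac{n_{Z,i}}{T_{Z,i}}\,\gamma(t)$, $\gamma(0)=1$, for the survival function of the stay time and conclude that it is the exponential $e^{-n_{Z,i}t/T_{Z,i}}$. The only difference is presentational: where you isolate a single cohort and suppress the inflow, the paper makes this same step precise by writing the subcompartment size as the convolution $Z_j(t)=\int_{0}^{\infty}\gamma_{Z,j}(\tau)\,\sigma_{Z,j}(t-\tau)\,\mathrm{d}\tau$ of the survival function with the inflow and matching it against the ODE, which is exactly the "conceptual obstacle" you identified.
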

\begin{proof}
    For this proof, we omit the age index $i$. Let $\sigma_{Z,j}(t)$ be the number of people entering subcompartment $Z_j$ at time $t$. Reformulating~\eqref{eq:LCTSECIR} leads to 
\begin{align}\label{eq:proof1}
   Z_{j}'(t)= \sigma_{Z,j}(t)
            -\frac{n_{Z}}{T_{Z}}\,Z_{j}(t).
\end{align} Let the function $\gamma_{Z,j}(\tau)=\mathbb{P}(X_{Z,j}>\tau)$ describe the probability that an individual is still in subcompartment $Z_j$ after $\tau$ days since entering. Therefore, $1-\gamma_{Z,j}(\tau)$ is the cumulative density function (CDF) of the random variable $X_{Z,j}$. Applying the definition of $\gamma_{Z,j}(\tau)$, the equality
\begin{align}\label{eq:proof2}
     Z_j(t)= \int_{0}^{\infty}\gamma_{Z,j}(\tau)\,\sigma_{Z,j}(t-\tau)\,\mathrm{d}\tau
\end{align} must hold. Together with the condition $\gamma_{Z,j}(0)=1$ resulting from the CDF property, equation~\eqref{eq:proof2} is equivalent to~\eqref{eq:proof1} if 
\begin{align*}
   \gamma_{Z,j}'(t)= - \frac{n_{Z}}{T_{Z}}\,\gamma_{Z,j}(t)
\end{align*} is satisfied. The unique exponential solution to this initial value problem finalizes the proof.
\end{proof}
Additionally, we need the statement that an Erlang distribution can be considered as a sum of exponential distributions, see also~\cite{hurtado_generalizations_2019}.
\begin{theorem}\label{thm:sum}
Let a set of random variables, $X_i$, with $i\in\{1,\dots,n\}$ and $n\in\mathbb{N}$, be independent and identically distributed according to the exponential distribution with parameter $\lambda\in\mathbb{R}_{>0}$. 

\noindent Then, the sum of the random variables, $X=\sum_{i=1}^{n}X_i$, is Erlang distributed with rate $\lambda$ and shape $n$.
\end{theorem}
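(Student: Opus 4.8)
The plan is to proceed by induction on the number of summands $n$, using the fact that the probability density of a sum of two independent, nonnegative random variables is the convolution of their individual densities. Since the Erlang density $f_{\lambda,\alpha}$ recalled in the preceding remark already fixes the target form, the whole argument reduces to checking that convolving an Erlang density of shape $n-1$ with an exponential density of the same rate reproduces an Erlang density of shape $n$.

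For the base case $n=1$ the single variable $X_1$ is exponential with parameter $\lambda$, and the Erlang density of shape $1$ reads $f_{\lambda,1}(x)=\lambda\,e^{-\lambda x}$ for $x\ge 0$; these coincide, so the claim holds trivially. For the inductive step I would set $S_{n-1}=\sum_{i=1}^{n-1}X_i$ and assume it is Erlang distributed with rate $\lambda$ and shape $n-1$. Because $X_n$ is independent of $S_{n-1}$ and all summands are supported on $[0,\infty)$, the density of $X=S_{n-1}+X_n$ is
\begin{align*}
f_X(x)=\int_0^x f_{\lambda,n-1}(y)\,\lambda\,e^{-\lambda(x-y)}\,\mathrm{d}y .
\end{align*}

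Substituting the explicit Erlang and exponential densities, the factor $e^{-\lambda y}$ from the Erlang density cancels against the $e^{+\lambda y}$ arising from $e^{-\lambda(x-y)}$, so the integrand loses its $y$-dependence in the exponential and the integral collapses to $\int_0^x y^{n-2}\,\mathrm{d}y=x^{n-1}/(n-1)$. Collecting the constants then yields $f_X(x)=\frac{\lambda^n}{(n-1)!}\,x^{n-1}\,e^{-\lambda x}=f_{\lambda,n}(x)$, which closes the induction. The only place demanding care is the constant bookkeeping, namely verifying $\frac{1}{(n-2)!}\cdot\frac{1}{n-1}=\frac{1}{(n-1)!}$ and tracking the power $\lambda^{n-1}\cdot\lambda=\lambda^n$; there is no genuine analytic obstacle, only a risk of an off-by-one error in the factorials.

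As an alternative route I would note that the same result follows immediately from moment generating functions: the exponential law has MGF $\lambda/(\lambda-t)$ for $t<\lambda$, independence turns the MGF of the sum into the product $\bigl(\lambda/(\lambda-t)\bigr)^n$, and uniqueness of MGFs identifies this as the Erlang law with rate $\lambda$ and shape $n$. Either argument is elementary, so the convolution induction is the more self-contained choice and matches the ``elementary calculus and stochastics'' framing announced for these proofs.
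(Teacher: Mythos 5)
Your proof is correct and follows exactly the route the paper indicates: induction on $n$ combined with the convolution rule for the density of a sum of two independent random variables, with the constant bookkeeping $\frac{1}{(n-2)!}\cdot\frac{1}{n-1}=\frac{1}{(n-1)!}$ carried out correctly. The paper only sketches this argument in one sentence, so your write-up is simply a fully worked version of the same proof.
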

\begin{proof}
    The proof can be conducted by induction, using the calculation rules for the density function of the sum of two independent random variables.
\end{proof}

Finally, the combination of the two preceding theorems yields the following corollary. The corollary demonstrates that the application of the LCT indeed results in Erlang distributed stay times within the compartments.
\begin{corollary}\label{thm:erlang}
    For each compartment $Z\in\mathcal{A}$ and age group $i\in\{1,\dots,m\}$, let 
    \begin{align*}
        X_{Z,i}=\sum_{j=1}^{n_{Z,i}}X_{Z,i,j}
    \end{align*}be the random variable representing the overall stay time in the compartment.

    \noindent Then, the random variable $X_{Z,i}$ is Erlang distributed with rate parameter $\frac{n_{Z,i}}{T_{Z,i}}$ and shape parameter $n_{Z,i}$.
\end{corollary}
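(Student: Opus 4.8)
The plan is to combine the two preceding results directly, since the corollary is essentially their composition. By \cref{thm:subcompartments}, for a fixed compartment $Z\in\mathcal{A}$ and age group $i$, each subcompartment stay time $X_{Z,i,j}$ with $j\in\{1,\dots,n_{Z,i}\}$ is exponentially distributed with the common parameter $\lambda=\frac{n_{Z,i}}{T_{Z,i}}$. The overall stay time is the sum $X_{Z,i}=\sum_{j=1}^{n_{Z,i}}X_{Z,i,j}$, so \cref{thm:sum} applies provided its hypotheses are met, yielding an Erlang distribution with rate $\frac{n_{Z,i}}{T_{Z,i}}$ and shape $n_{Z,i}$. The bulk of the argument is therefore just checking that the setup of \cref{thm:sum} is satisfied.

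First I would fix $Z\in\mathcal{A}$ and $i\in\{1,\dots,m\}$ and suppress the age index as in the earlier proofs. I would then record that each $X_{Z,i,j}$ is exponentially distributed with the \emph{same} parameter $\frac{n_{Z,i}}{T_{Z,i}}$, which is immediate from \cref{thm:subcompartments} because that parameter is independent of the subcompartment index $j$. This gives the identical-distribution half of the hypothesis of \cref{thm:sum} for free. Setting $n=n_{Z,i}$ and $\lambda=\frac{n_{Z,i}}{T_{Z,i}}$, I would then invoke \cref{thm:sum} to conclude that $X_{Z,i}$ is Erlang distributed with rate $\lambda=\frac{n_{Z,i}}{T_{Z,i}}$ and shape $n=n_{Z,i}$, matching the claimed parameters exactly.

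The main obstacle, and the one genuine gap to address, is the \emph{independence} of the subcompartment stay times $X_{Z,i,1},\dots,X_{Z,i,n_{Z,i}}$, since \cref{thm:sum} requires the summands to be independent, whereas \cref{thm:subcompartments} only delivers the marginal exponential law of each $X_{Z,i,j}$. The justification rests on the Markovian structure of the LCT: each transition rate $\frac{n_{Z,i}}{T_{Z,i}}$ depends only on the current subcompartment and not on how long an individual has resided in earlier subcompartments, which is precisely the memorylessness property of the exponential distribution established via the differential equation $\gamma_{Z,j}'(t)=-\frac{n_{Z,i}}{T_{Z,i}}\gamma_{Z,j}(t)$ in the proof of \cref{thm:subcompartments}. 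I would argue that because the waiting time in $Z_{i,j}$ begins afresh upon entry and its exponential rate carries no dependence on the sojourn in $Z_{i,j-1}$, the successive stay times are mutually independent, so the hypotheses of \cref{thm:sum} hold and the corollary follows.
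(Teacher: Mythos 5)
Your proof takes essentially the same route as the paper, which simply states that the corollary follows by combining \cref{thm:subcompartments} and \cref{thm:sum}. In fact, your treatment is slightly more careful than the paper's: you explicitly flag and justify the independence of the subcompartment stay times (via the Markovian structure of the linear chain), a hypothesis of \cref{thm:sum} that the paper invokes without comment.
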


\begin{figure}[bt]
\begin{minipage}[t]{0.5\textwidth}
    \centering
    \includegraphics[width=\textwidth]{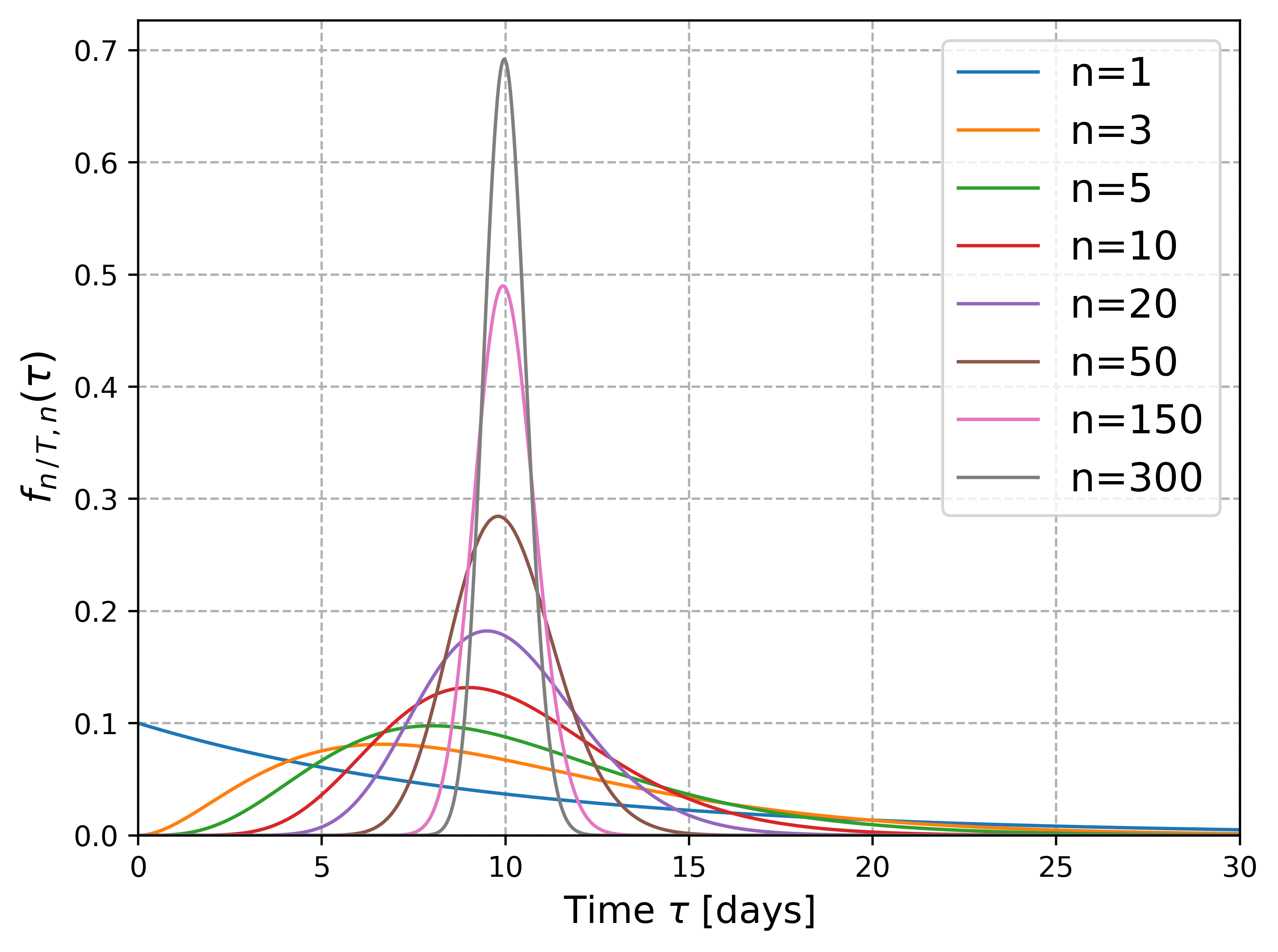}
\end{minipage}
\begin{minipage}[t]{0.5\textwidth}
 \centering
    \includegraphics[width=\textwidth]{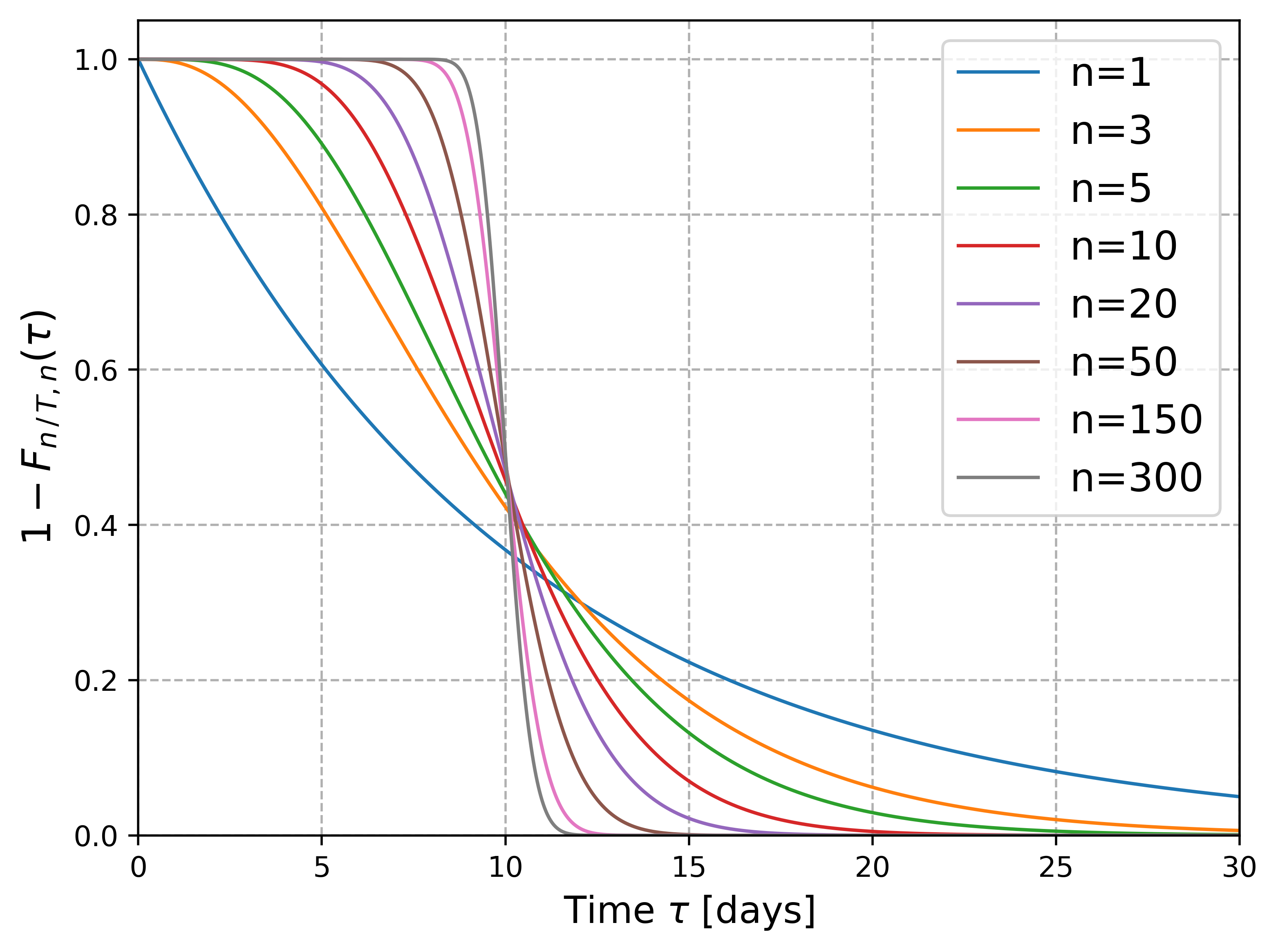}
\end{minipage}
\caption{\textbf{Density and survival function in an LCT model.} Representation of the density function $f_{n/T,\,n}(\tau)$ of the Erlang distribution (left) and the associated survival function $1-F_{n/T,\,n}(\tau)$ (right) for different choices of the parameter $n$. The average stay time $T=10$ is set for all functions. Here, we omit the indices for compartments and age groups.}
    \label{fig:Erlang}
\end{figure}

One can also show that a SECIR-type model based on integro-differential equations as the one presented in~\cite{wendler2024nonstandardnumericalschemenovel} can be reduced to an LCT model if all stay time distributions are chosen Erlang distributed. An idea of the proof can be taken from~\cite[Appendix]{feng_mathematical_2016}. Additionally, it is obvious that the LCT-SECIR model is a generalization of an ODE-SECIR model as of~\cite{kuhn_assessment_2021}. The choice of only one subcompartment, $n_{Z,i}=1$, for all compartments $Z\in\mathcal{A}$ and each age group $i$, leads to an ODE model with exponentially distributed stay times. Therefore, the LCT model is a generalization of a corresponding ODE model and a specialization of a corresponding IDE model. That means, we can include more general stay time distribution assumptions without the need to use more complicated integro-differential equations.

\cref{thm:erlang} implies that the mean stay time in compartment $Z_i$ for $Z\in\mathcal{A}$ and age group $i$, 
\begin{align}
   \mathbb{E}(X_{Z,i})=T_{Z,i}, \label{eq:mean}
\end{align} matches the definition of $T_{Z,i}$. The variance is given by
\begin{align}\label{eq:variance}
    \mathrm{Var}\,(X_{Z,i})=\frac{(T_{Z,i})^2}{n_{Z,i}}.
\end{align}

The variance~\eqref{eq:variance} of the \LP{Erlang} distribution decreases for an increasing number of subcompartments \LP{for a fixed parameter $T_{Z,i}$}. This can also be observed in~\cref{fig:Erlang}, where the probability density and the survival function of Erlang distributions with a fixed mean and varying numbers of subcompartments are plotted. For $n_{Z,i}\to\infty$ the variance tends toward zero and the Erlang distribution converges to the Delta distribution. This implies a fixed stay time $T_{Z,i}$ in the respective compartment $Z_i$.

The model parameters $T_{Z,i}$ and $n_{Z,i}$ can be determined in the case of known mean and variance.  \LP{Note that the constraint of the Erlang distribution, where the number of subcompartments must be a natural number, could mean that the observed variance can only be approximated, i.e., if the mean stay time $T_{Z,i}$ is set with exact precision according to~\eqref{eq:mean}, then, inserting the observed variance might result in a noninteger value for the number of subcompartments. A solution to prevent rounded values is an approach by Cassidy et al.~\cite{cassidy_numerical_2022}, in which hypoexponential distributions instead of Erlang distributions are used to preserve the mean and variance accurately, also maintaining an ODE model formulation. Another alternative is given by Hurtado and Kirosingh~\cite{hurtado_generalizations_2019}, who present the \textit{Generalized Linear Chain Trick}, which allows for phase-type distributions. Any positive-valued stay time distribution can be approximated with arbitrary accuracy, while the model is still formulated by an ODE-system. 
}

\begin{remark} \label{remark:epi_meaning}
   In some published works, the subcompartments are described as purely mathematical constructs utilized to achieve an Erlang distributed stay time in the disease states, see e.g.~\cite{lloyd_dependence_2001,ma_generality_2006}. \MK{While the} subcompartments do not \MK{represent a particular} biological \MK{state}, this statement can be questioned\MK{, nevertheless}.
   
   The subcompartments can be assigned a biological \MK{interpretation} in the sense that the expected remaining stay time in the disease state $Z\in\mathcal{A}$ depends on the current subcompartment, as the following consideration shows. Let us consider an individual of age group $i$ in the subcompartment $Z_{i,j}$ with $j\in\{1,\dots,n_{Z,i}\}$.
   For the individual, there are $n_{Z,i}-j$ \LP{remaining} subcompartments in compartment $Z_i$ which are ordered subsequently according to the course of the disease. \LP{T}he expected remaining stay time in $Z_{i,j}$ is the same as for the subsequent subcompartments. Therefore, by applying the formula for the mean of exponential distributions, together with the linearity of mean values and the result of~\cref{thm:subcompartments}, we obtain that the expected remaining stay time in $Z_i$ for an individual in $Z_{i,j}$ is 
   \begin{align*}
       \left(n_{Z,i}-j+1\right)\frac{T_{Z,i}}{n_{Z,i}}.
   \end{align*}
   The remaining stay time is different for different subcompartments, as the result depends on $j$. 
\end{remark}
There is substantial evidence that, for most infectious diseases, the Erlang distribution is more realistic than the exponential distribution for the stay times in disease states,  cf.~\cite{wearing_appropriate_2005,lloyd_realistic_2001,wang_evaluations_2017,donofrio_mixed_2004}. Real distributions tend to have a lower variance than the exponential distribution, making other distributions such as Erlang distributions more suitable~\cite{lloyd_realistic_2001}.
The \textit{memoryless property} of the exponential distribution could be a key factor in explaining why the assumption of an exponentially distributed stay time is considered as unrealistic. That means, that the expected remaining stay time in a compartment is independent of the time already spent. For the Erlang distribution, this expected remaining time decreases the longer the time already spent, which is more realistic for most infectious diseases. For more details on the last paragraph, see also~\cite{feng_epidemiological_2007}.

\section{Numerical simulations}\label{sec:numerics}
In this section, we conduct numerical experiments to evaluate the impact of a more realistic distribution assumption and the use of age groups on the simulation results. Furthermore, we present a scenario inspired by the spread of COVID-19 in Germany to illustrate the utility of this approach for realistic simulations. \MK{Note that across the different sections, the simulation time differs to set different foci, i.e., short- or long-term. In~\cref{sec:change,sec:covid} and in the first part of~\cref{sec:ageresolution}, short-term developments are considered as, in general, interventions are implemented to prevent epidemic peaking right from the start. In~\cref{sec:peaks} and the second part of~\cref{sec:ageresolution}, we consider the more theoretical outcomes of epidemic peaks and final sizes in long-term scenarios in the what-if no intervention scenario.}

The proposed age-resolved LCT-SECIR model, along with the related numerical scenarios, are incorporated open-source into our high-performance, modular epidemics simulation software MEmilio~\cite{memiliov1.3}. \MK{Our model, as well as MEmilio, is written in efficient C++ to allow for fast and scalable execution. Together with already existing software infrastructure of MEmilio, we can use distributed memory parallelism through the MPI standard to execute ensemble runs in parallel, which is also demonstrated in~\cref{sec:runtime}.}
\subsection{Parameter selection and data}\label{sec:parameters_data}
\begin{table}[tb]
    \centering
\def\arraystretch{1.25}
\begin{tabular}{>{\columncolor{gray!20}}c | >{\centering}p{1.4cm}>{\centering}p{1.4cm}>{\centering}p{1.4cm}>{\centering}p{1.4cm} >{\centering}p{1.4cm}>{\centering}p{1.4cm}|>{\centering\arraybackslash}p{1.8cm}}
\toprule[0.8pt]
     \rowcolor{gray!20} Parameter &  $0$--$4$ & $5$--$14$ & $15$--$34$ & $35$--$59$ & $60$--$79$ & $80+$ & Weighted Average \\
\midrule[0.8pt]
    $\rho_i(t)$ & $0.03$ & $0.06$ & $0.06$ & $0.06$ & $0.09$ & $0.175$ & $0.07333$\\
\midrule
   $\mu_{C_i}^{I_i}$ & $0.75$ & $0.75$ & $0.8$ & $0.8$ & $0.8$ & $0.8$ & $ 0.79310$\\
     $\mu_{I_i}^{H_i}$ & $0.0075$ & $0.0075$ & $0.019$ & $0.0615$ & $0.165$ & $0.225$ & $0.07864$\\
     $\mu_{H_i}^{U_i}$ & $0.075$ & $0.075$ & $0.075$  & $0.15$ & $0.3$ & $0.4$ & $0.17318$\\
     $\mu_{U_i}^{D_i}$ & $0.05$ & $0.05$ & $0.14$ & $0.14$ & $0.4$ & $0.6$ & $0.21718$\\
\midrule
    $T_{E,i}$ & $3.335$ & $3.335$ & $3.335$ & $3.335$ & $3.335$ & $3.335$ & $3.335$\\
     $T_{C,i}$ & $2.74$ & $2.74$ & $2.565$ & $2.565$ & $2.565$ & $2.565$ & $2.58916$\\
      $T_{I,i}$ & $7.02625$ & $7.02625$ & $7.0665$ & $6.9385$ & $6.835$ & $6.775$ & $6.94547$\\
    $T_{H,i}$ & $5$ & $5$ & $5.925$ & $7.55$ & $8.5$ & $11$ & $7.28196$\\
     $T_{U,i}$ & $6.95$ & $6.95$ & $6.86$ & $17.36$ & $17.1$ & $11.6$ & $13.066$ \\
\bottomrule
\end{tabular}
\caption{\textbf{Age-resolved parameters for wild-type SARS-CoV-2.} Epidemiological parameters used to simulate the spread of SARS-CoV-2 in Germany in the year $2020$. The parameters are either directly set or calculated based on~\cite[Table~2]{kuhn_assessment_2021}. For the average values, the age specific values are weighted in accordance with the relative share of the age group in the total population according to~\cite{regionaldatenbank_deutschland_fortschreibung}. }\label{tab:COVID-19parameters}
\end{table}
For the numerical simulations, we use parameters and data on the spread of the SARS-CoV-2 virus in Germany in $2020$. For SARS-CoV-2, the Robert Koch Institute (RKI) publishes daily, age-resolved data on the total number of confirmed cases and deaths in Germany~\cite{RKI_data_2024}. Furthermore, we incorporate data on COVID-19 patients in intensive care unit that is reported by~\cite{divi2024}. Our model utilizes six age groups as defined by the RKI data. The population sizes $N_i^+$ are set according to~\cite{regionaldatenbank_deutschland_fortschreibung}. 

We adopt age-resolved transition probabilities, mean stay times and the transmission probabilities from~\cite[Table~2]{kuhn_assessment_2021} for an ODE-based model. Note that in~\cite{kuhn_assessment_2021}, the mean stay time could be dependent not only on the starting compartment but also on the destination compartment. However, the parameters $\mu_{Y_i}^{Z_i}$ then lose their interpretation as probabilities, which is a consequence of the memoryless property. To preserve the original interpretation, we calculate our required mean stay times by weighting the given mean stay times with the given probabilities. The values obtained for the epidemiological parameters are presented in~\cref{tab:COVID-19parameters}. In scenarios where we require parameters that are not stratified by age, the age-resolved parameters are weighted in accordance with the relative share of the age group in the total population. The results are also shown in~\cref{tab:COVID-19parameters}. We set $\xi_{C,i}(t)=1$ and $\xi_{I,i}(t)=0.3$ for all age groups $i\in\{1,\dots,m\}$.
This implies that individuals who are not symptomatic do not isolate themselves, whereas those who are symptomatic do so more often.

\LP{For the LCT model, we assess different assumptions regarding the number of subcompartments. We use the notation \textit{LCTX} for an LCT model with $X=n_{Z,i}$ subcompartments for all compartments $Z\in\mathcal{A}$ and age groups $i\in\{1,\dots,m\}$. Furthermore, we consider an LCT model applying an idea used in~\cite{keeling_understanding_2002}, i.e., for every compartment $Z \in \mathcal{A}$, the number of subcompartments is chosen such that $n_{Z,i}\approx T_{Z,i}$. For every value, the mean stay time in~\cref{tab:COVID-19parameters} is rounded to the nearest integer value. We denote this LCT model by \textit{LCTvar} as the subcompartment numbers are variable according to the mean stay times.} \AW{The ideal way to set the number of subcompartments would be to use the corresponding variances to the mean stay times from~\cref{tab:COVID-19parameters} and set the numbers of subcompartments accordingly by applying~\cref{eq:variance}. Unfortunately, the appropriate variances are not available for all mean stay times, which is why we proceed as described. Although all models are based on ODE systems, we use the simplified notation \textit{ODE} to refer to a simple ODE-based model without Linear Chain Trick. Note that notation \textit{LCT$1$} corresponds to \textit{ODE}.}

\LP{T}he contact matrix $\phi(t)$ as well as the initial values are provided for each numerical experiment individually.
Except for~\cref{sec:runtime}, the ODE system\LP{s} describing the models are solved using a Runge-Kutta scheme of fifth order with a fixed time step of $\Delta t=10^{-2}$. 

\subsection{Impact of the distribution assumption on model behavior}
In order to compare the qualitative behavior of LCT models against simple ODE models, we examine the dynamics at change points and analyze epidemic peaks \MK{and final sizes}. To investigate only the effect of the distribution assumptions, the population is not divided into age groups for these experiments. \LP{We therefore omit the age index in the notation of the parameters.} 
\subsubsection{Behavior at change points}\label{sec:change}
\begin{figure}[tb]
    \centering
    \begin{minipage}[t]{0.47\textwidth}
    \centering
     \includegraphics[width=\textwidth]{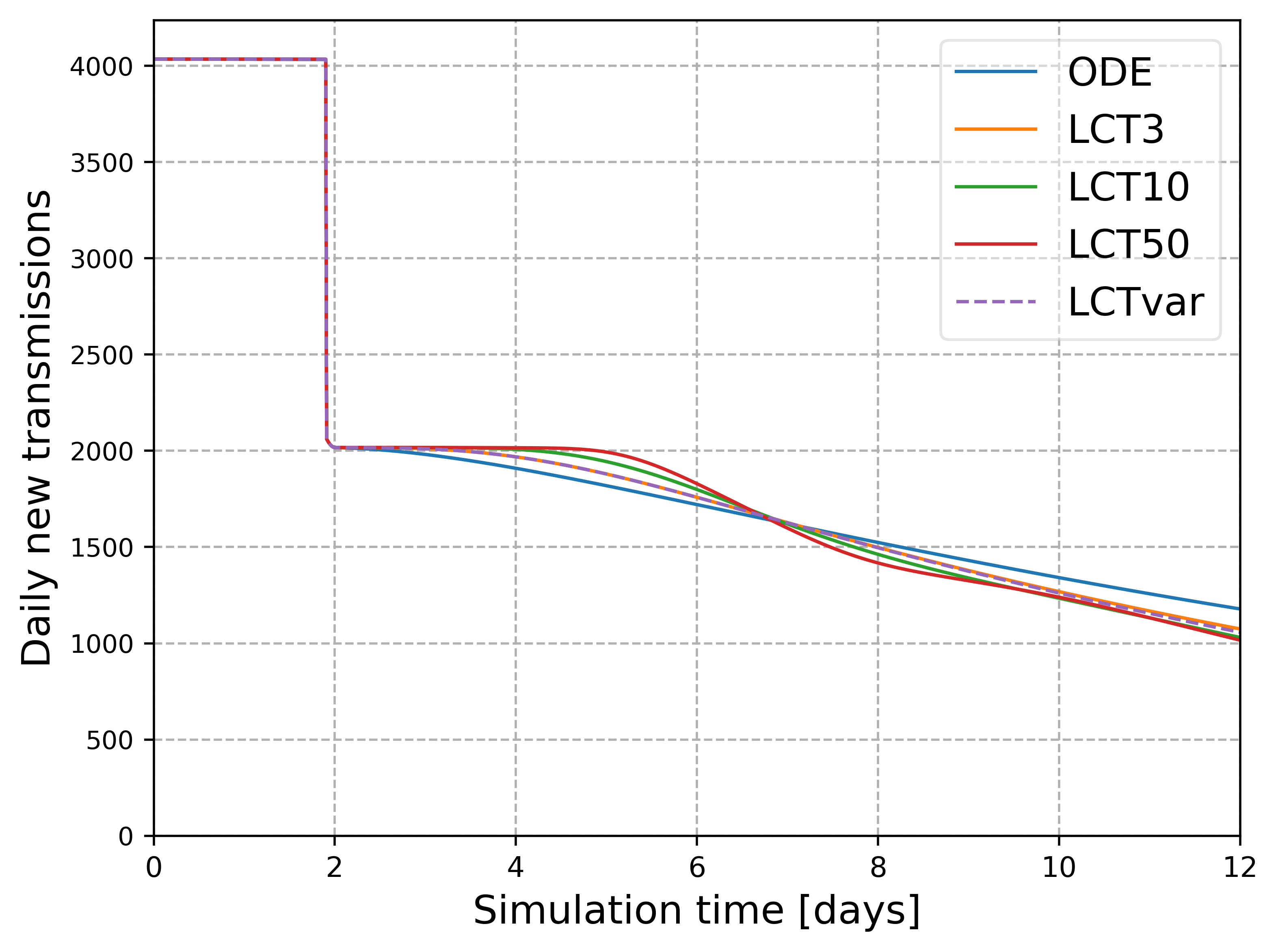}
\end{minipage}
\begin{minipage}[t]{0.47\textwidth}
 \centering
    \includegraphics[width=\textwidth]{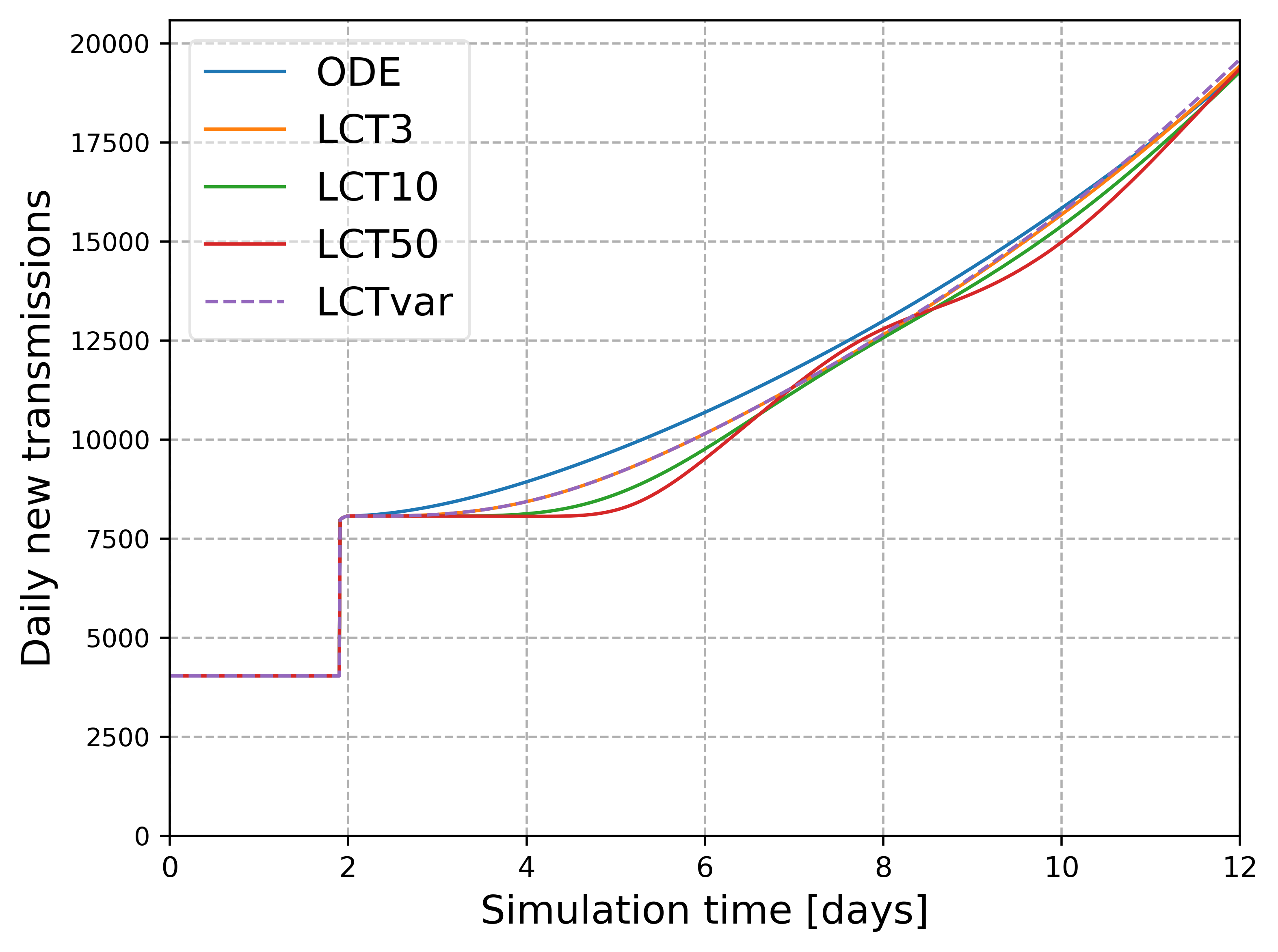}
\end{minipage}  
\caption{\textbf{Daily new transmissions \MK{around} change points.} Comparison of the daily new transmissions, i.e., the number of people transiting from compartment $S$ to $E$ within one simulation day, of different LCT models against a simple ODE model at change points. The contact rate $\phi(t)$ is halved (left) or doubled (right) after the second simulation day. The naming of the LCT models refer to different assumptions regarding the number of subcompartments, e.g., LCT$3$ refers to an LCT model with $n_Z=3$ subcompartments for each compartment $Z\in\mathcal{A}$ and ODE corresponds to LCT$1$. \LP{LCTvar refers to an LCT model with $n_{Z}\approx T_{Z}$ for each compartment $Z\in\mathcal{A}$.}}
    \label{fig:changepoints}
\end{figure}
\begin{figure}[tb]
    \centering
    \begin{minipage}[t]{0.45\textwidth}
    \centering
     \includegraphics[width=\textwidth]{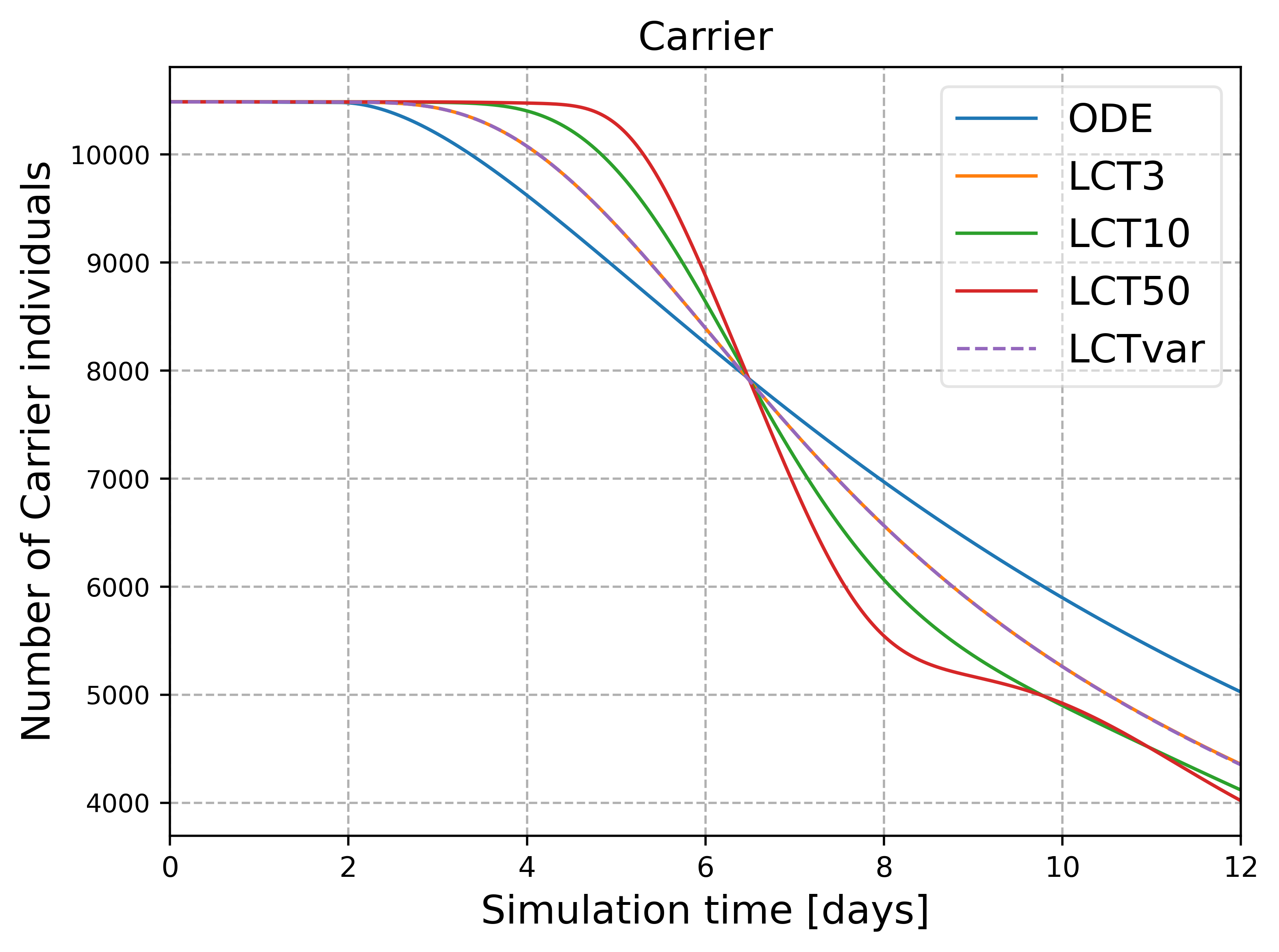}
\end{minipage}
\begin{minipage}[t]{0.45\textwidth}
 \centering
    \includegraphics[width=\textwidth]{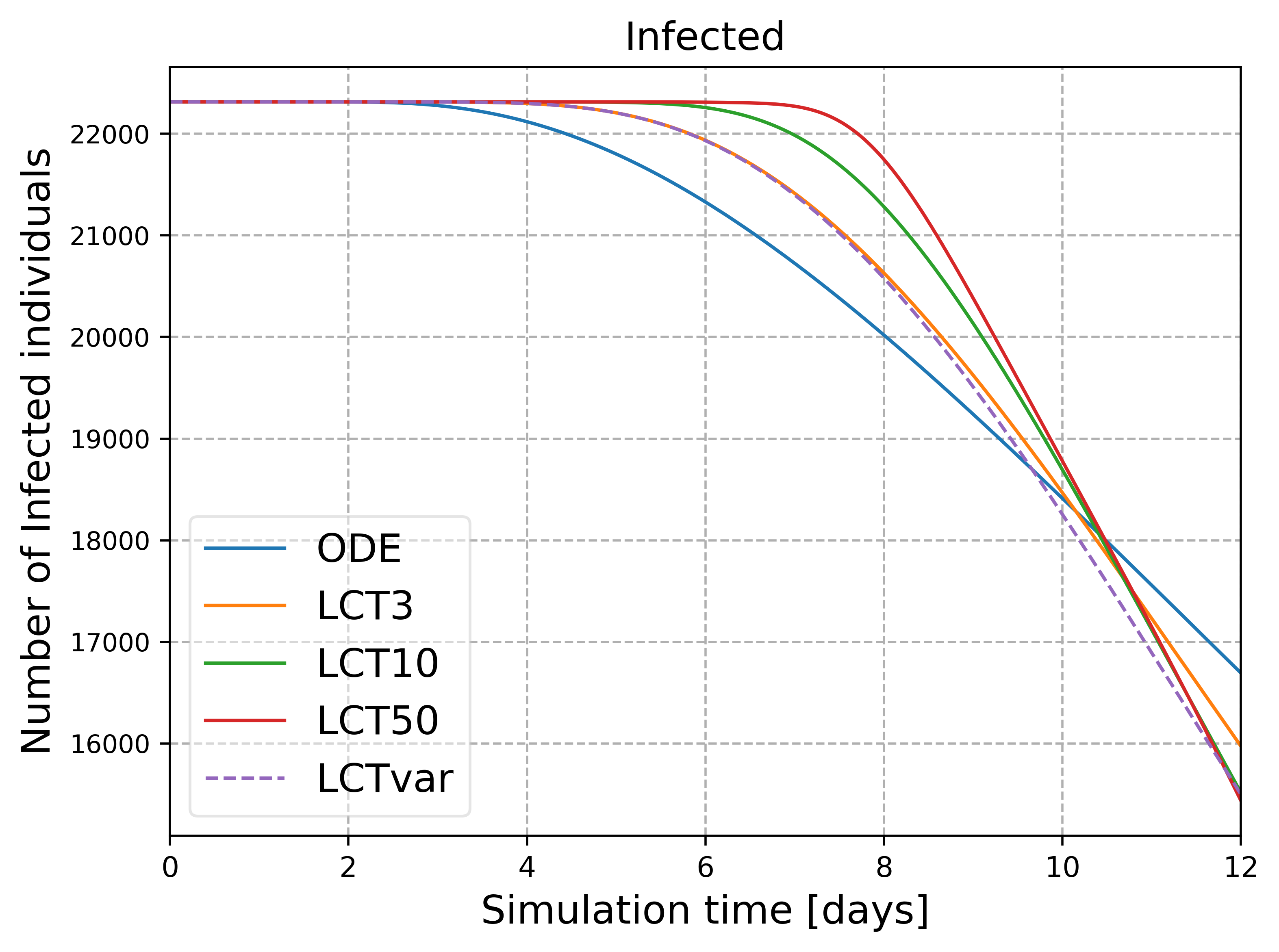}
\end{minipage}  
 \centering
    \begin{minipage}[t]{0.45\textwidth}
    \centering
     \includegraphics[width=\textwidth]{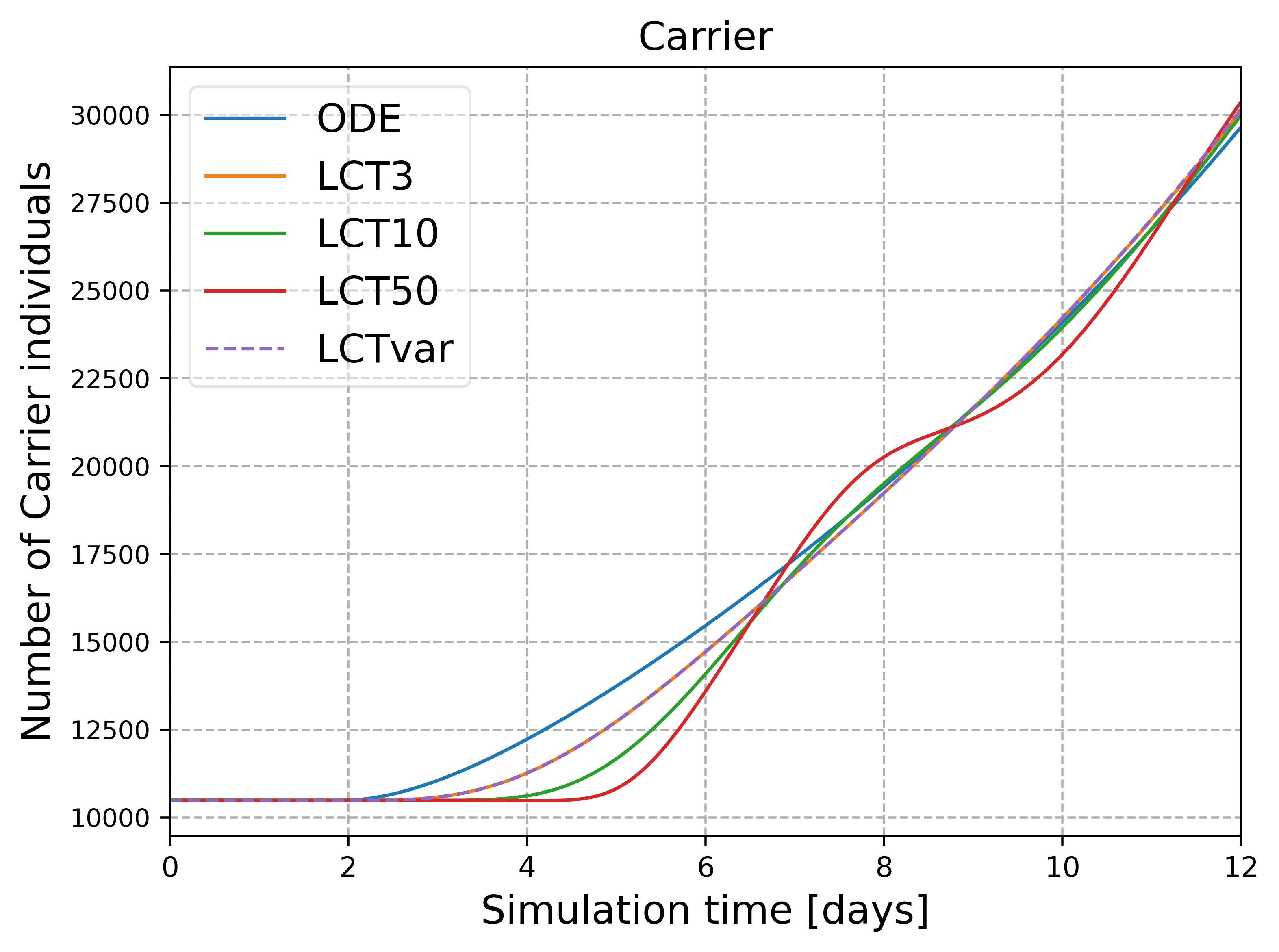}
\end{minipage}
\begin{minipage}[t]{0.45\textwidth}
 \centering
    \includegraphics[width=\textwidth]{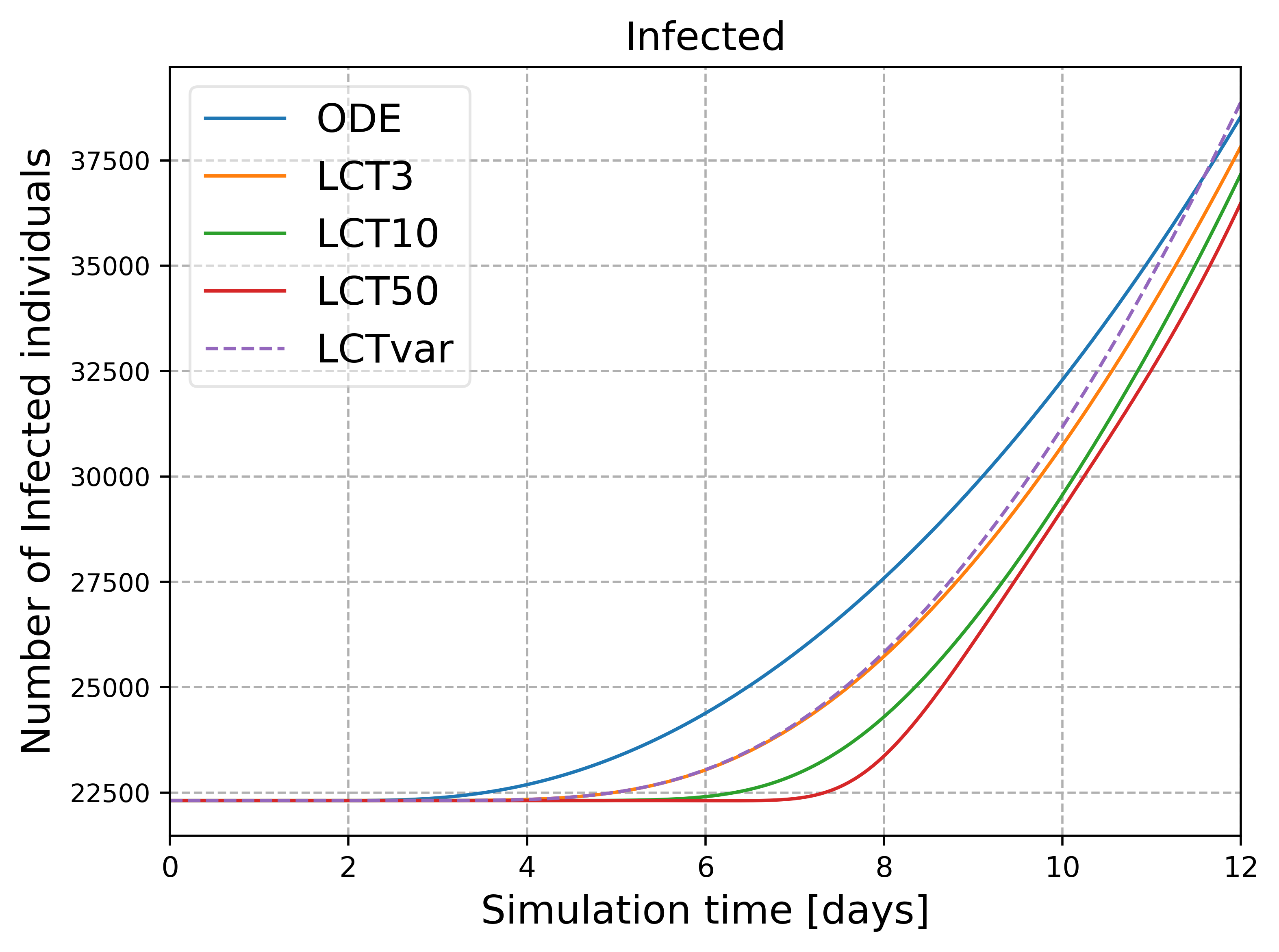}
\end{minipage}  
\caption{\textbf{Number of individuals in Carrier and Infected state \LP{around} different change points.} Comparison of the number of individuals in the Carrier (left) and Infected (right) compartment of different LCT models against an ODE model for the case of a halved (top) or doubled (bottom) contact rate $\phi(t)$ after two simulation days. \MK{Further notation as in~\cref{fig:changepoints}.}}
    \label{fig:compartments_changepoints}
\end{figure}
\begin{figure}[!bt]
\centering
    \begin{minipage}[t]{0.325\textwidth}\includegraphics[width=\textwidth]{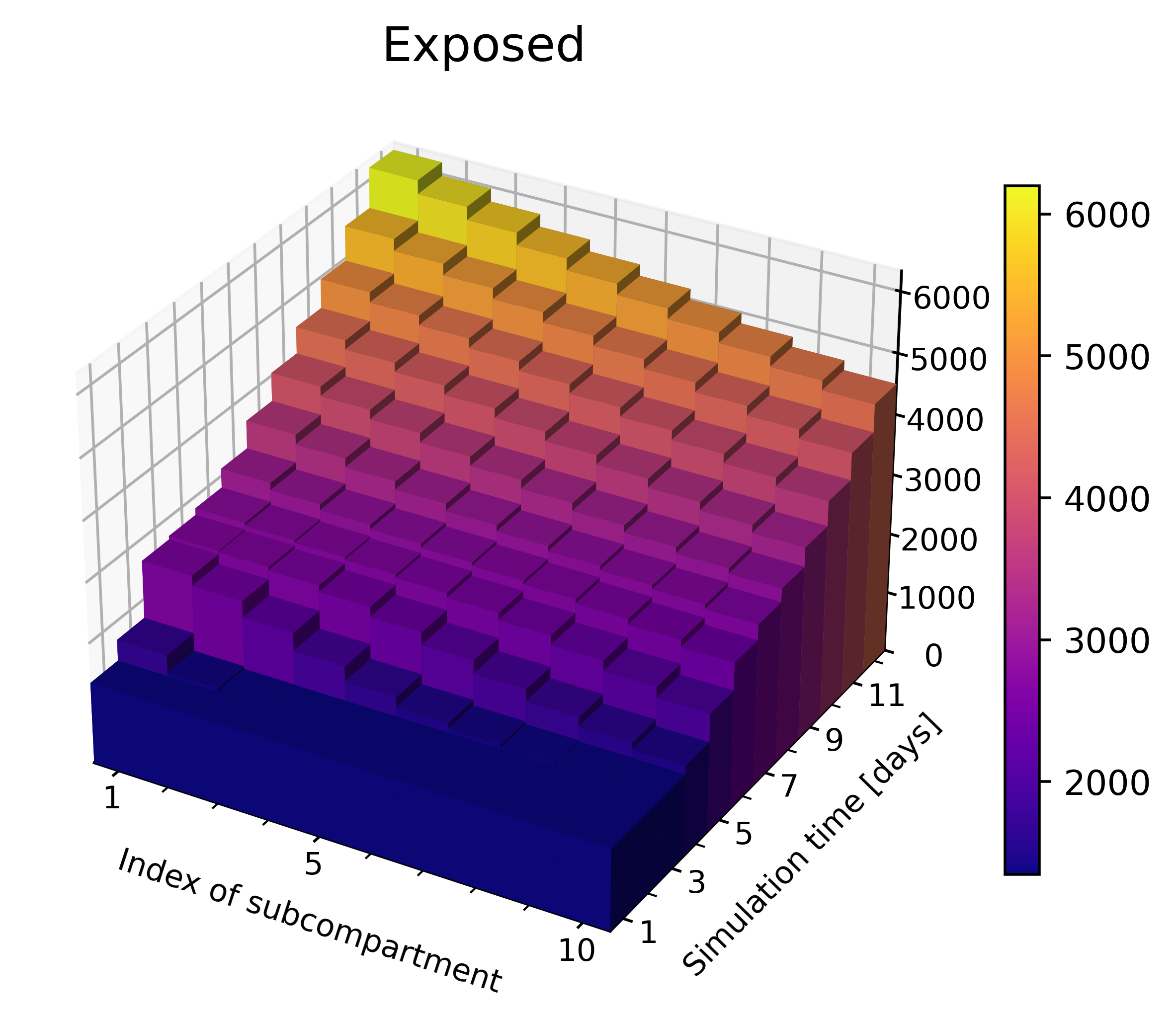}
\end{minipage}
\begin{minipage}[t]{0.325\textwidth}
 \centering
\includegraphics[width=\textwidth]{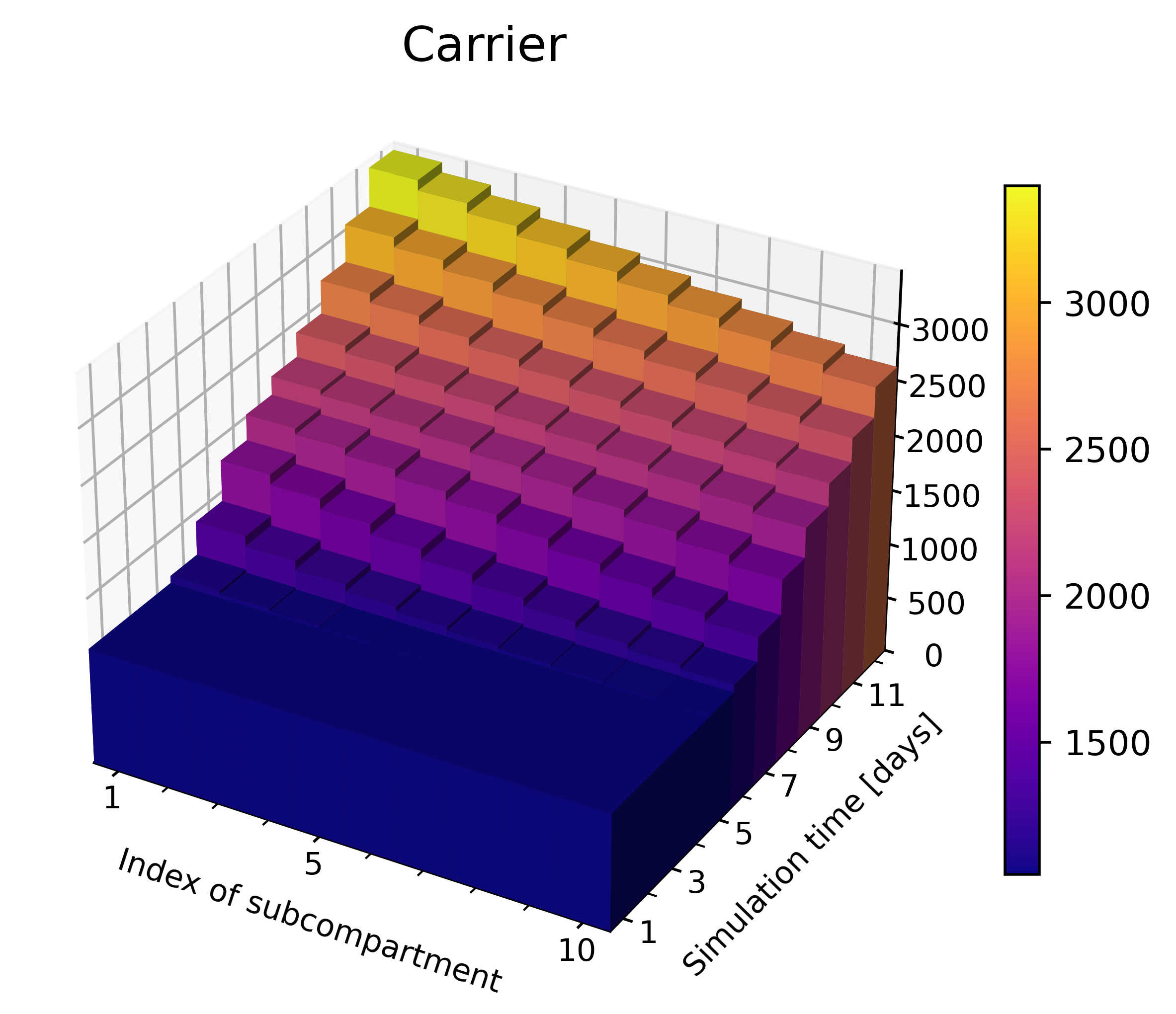}
\end{minipage}  
\begin{minipage}[t]{0.325\textwidth}
    \centering
\includegraphics[width=\textwidth]{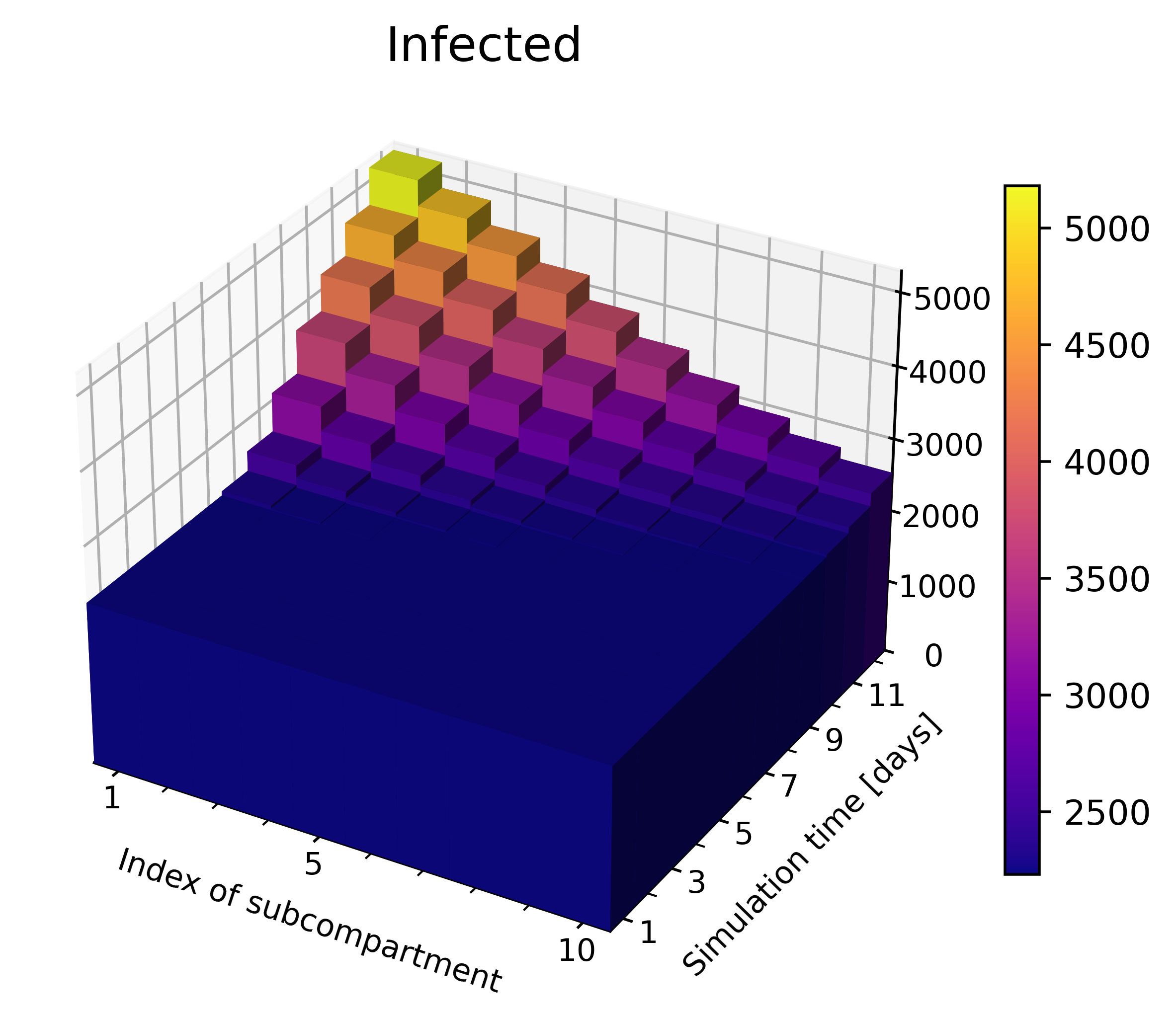}
\end{minipage}\\
\vspace{0.5cm}
    \centering
    \begin{minipage}[t]{0.325\textwidth}
    \centering     \includegraphics[width=\textwidth]{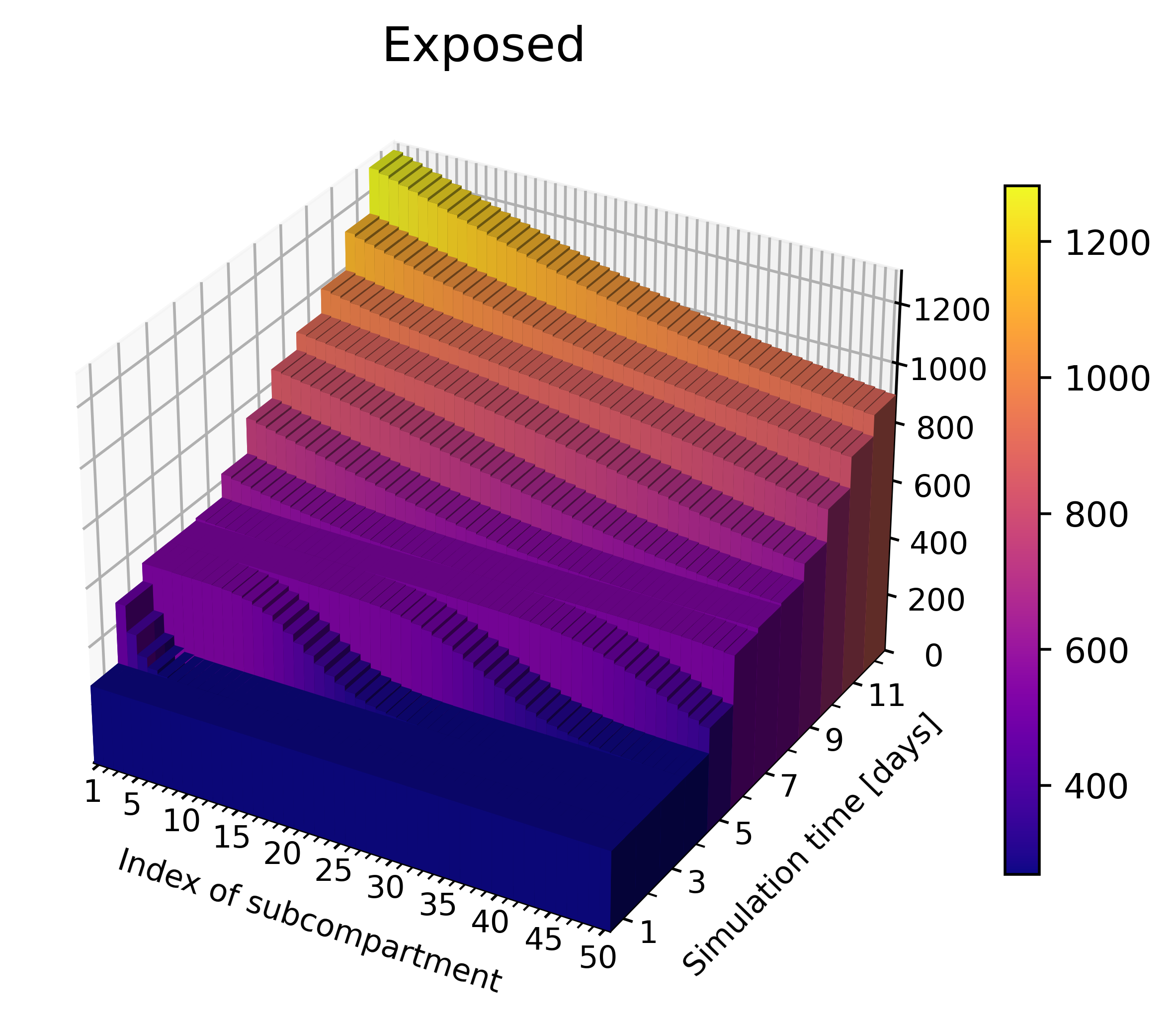}
\end{minipage}
\begin{minipage}[t]{0.325\textwidth}
 \centering
\includegraphics[width=\textwidth]{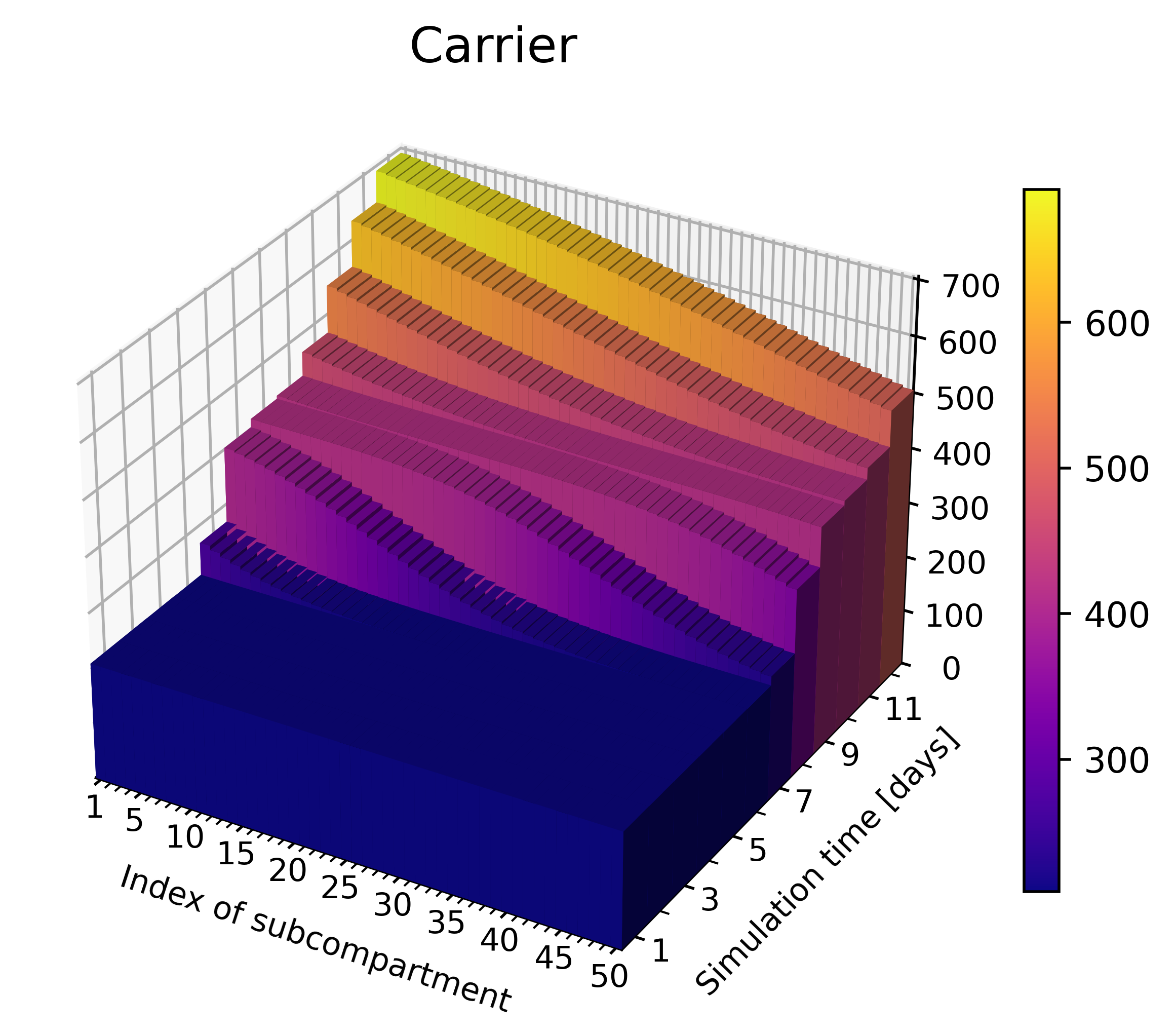}
\end{minipage}  
\begin{minipage}[t]{0.325\textwidth}
    \centering
 \includegraphics[width=\textwidth]{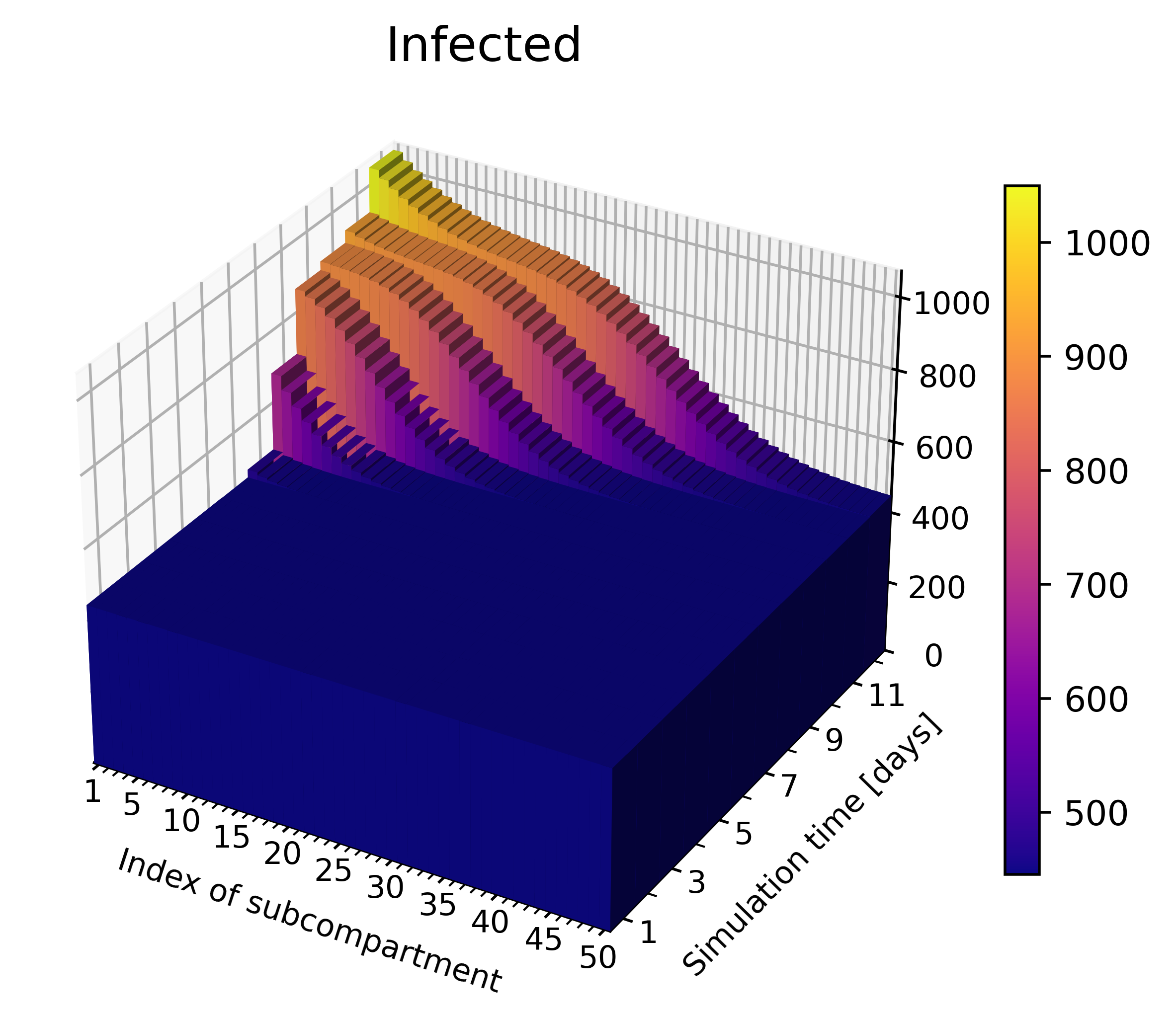}
\end{minipage}
\caption{\textbf{Distribution of individuals in the subcompartments for an increased contact rate.} The figures depict the number of individuals in the $n=10$ (top) or $n=50$ (bottom) subcompartments of the Exposed compartment (left), the Carrier compartment (center), and the Infected compartment (right) for each simulation day in the event of a doubled contact rate $\phi(t)$ after two days. The data for day $0$ are omitted, as no change can be observed in comparison to day $1$.}
    \label{fig:3drisesubcompartments}
\end{figure}

To gain insight into the behavior of simulation results obtained with the simple ODE model in comparison to LCT models, we analyze their reaction to change points.
A change point may be induced through the adoption or lifting of a NPI.

\LP{We set the contact rate and the initial values such that we obtain roughly constant infection dynamics at the start of the simulation; see the first two days in~\cref{fig:changepoints,fig:compartments_changepoints,fig:resultsshortTEhalved}. 
In particular, the contact rate is set to a level that results in a reproduction number of approximately one.
Using the next generation matrix, we compute the effective reproduction number for the ODE model,
\begin{align}\label{eq:reproduction}
    \mathcal{R}_{\text{eff}}(t) = \rho(t)\,\phi(t)\Big( \xi_{C}(t) \, T_C+\mu_C^{I}\,\xi_{I}(t)\,T_I\Big)\frac{S(t)}{N(t)},
\end{align}
to adapt the contact $\phi(0)$ such that $\mathcal{R}_{\text{eff}}(0)\approx 1$.
Under the assumption $S(0)\approx N(0)$ and with the parameters defined in~\cref{sec:parameters_data}, we get a contact rate of approximately $3.22$ (which is then also used for the LCT models). The initial compartment sizes are derived using the assumption of an approximately constant number of daily new transmissions and the parameters in~\cref{tab:COVID-19parameters}. Based on the official reporting numbers~\cite{RKI-Lagebericht-15-10-2020}, we use a value of $4050$ daily new transmissions as a starting point.
The resulting initial compartment sizes are distributed uniformly to the subcompartments.
}

To simulate a change point, we either double or halve the initial contact rate after two simulation days. The simulation results for the daily new transmissions using different models for both adaptions of the contact rates are visualized in~\cref{fig:changepoints}. The results for the number of individuals in the infectious compartments, Carrier and Infected, are shown in~\cref{fig:compartments_changepoints}.

Due to the changed contact rate, we first and correctly observe that the predicted number of daily new transmissions in~\cref{fig:changepoints} is halved or doubled, respectively, immediately at the change point. We furthermore see that the new transmissions for the case of the ODE model directly continue to increase or decrease. For LCT models with multiple subcompartments, a nontrivial lag time is observed before a subsequent change in the transmissions. This lag time is also evident in the compartment sizes depicted in~\cref{fig:compartments_changepoints}. It can be observed that the length of the lag time increases in accordance with the number of subcompartments used for the simulations. Furthermore, the slopes of the curves after the lag time vary according to the number of subcompartments. 

The discrepancy in the lag time is a result of the different variances of the stay time distribution in the respective disease states, 
see~\cref{fig:Erlang} and~\cref{eq:variance}. The exponential distributions used in the ODE model have the highest variance. A fraction of people leave immediately after entering a compartment, resulting in very short stays, cf.~\cref{fig:Erlang}. Therefore, the number of Carriers predicted by the ODE model increases immediately after the contact change,  see~\cref{fig:compartments_changepoints}.
The higher the number of subcompartments, the lower the variance~\eqref{eq:variance} and the fewer people go to the next state much before or much after the mean stay time. Consequently, the greater the choice for $n$, the longer the delay.

According to Dey et al.~\cite{dey_lag_2021}, a relaxation or implementation of a NPI leads to a change after $10$ to $14$ days in data on COVID-19 in the United States. Guglielmi et al.~\cite{guglielmi_identification_2023} also find a significant delay for data from Italy and Switzerland. Knowledge of lag times demonstrates the need for policymakers to proactively plan for NPIs. Therefore, it is essential that the delay is represented in simulation results without further adjustments.

\begin{figure}[!bt]
    \centering
    \begin{minipage}[t]{0.47\textwidth}
    \centering
     \includegraphics[width=\textwidth]{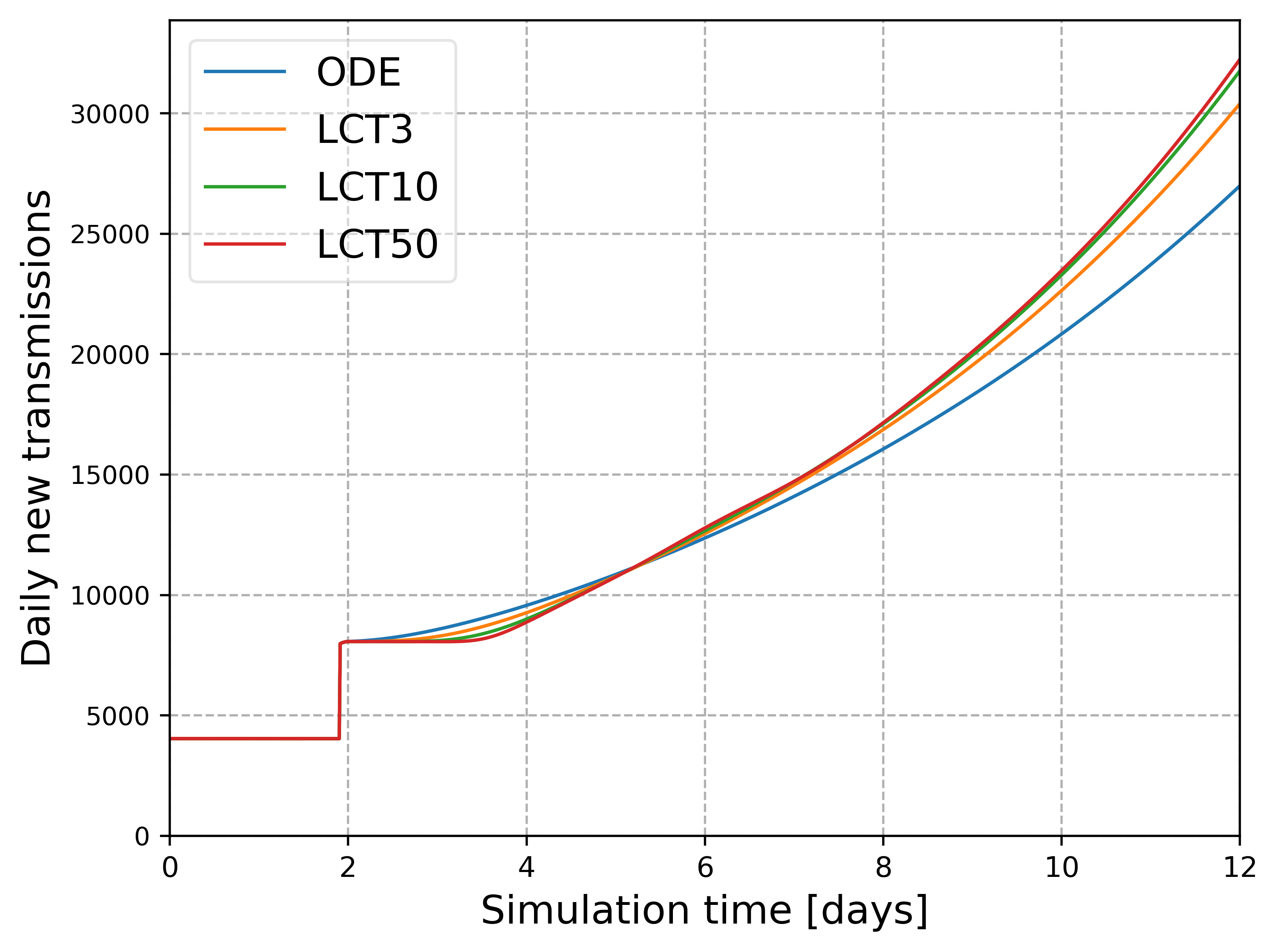}
\end{minipage}
\begin{minipage}[t]{0.47\textwidth}
 \centering
    \includegraphics[width=\textwidth]{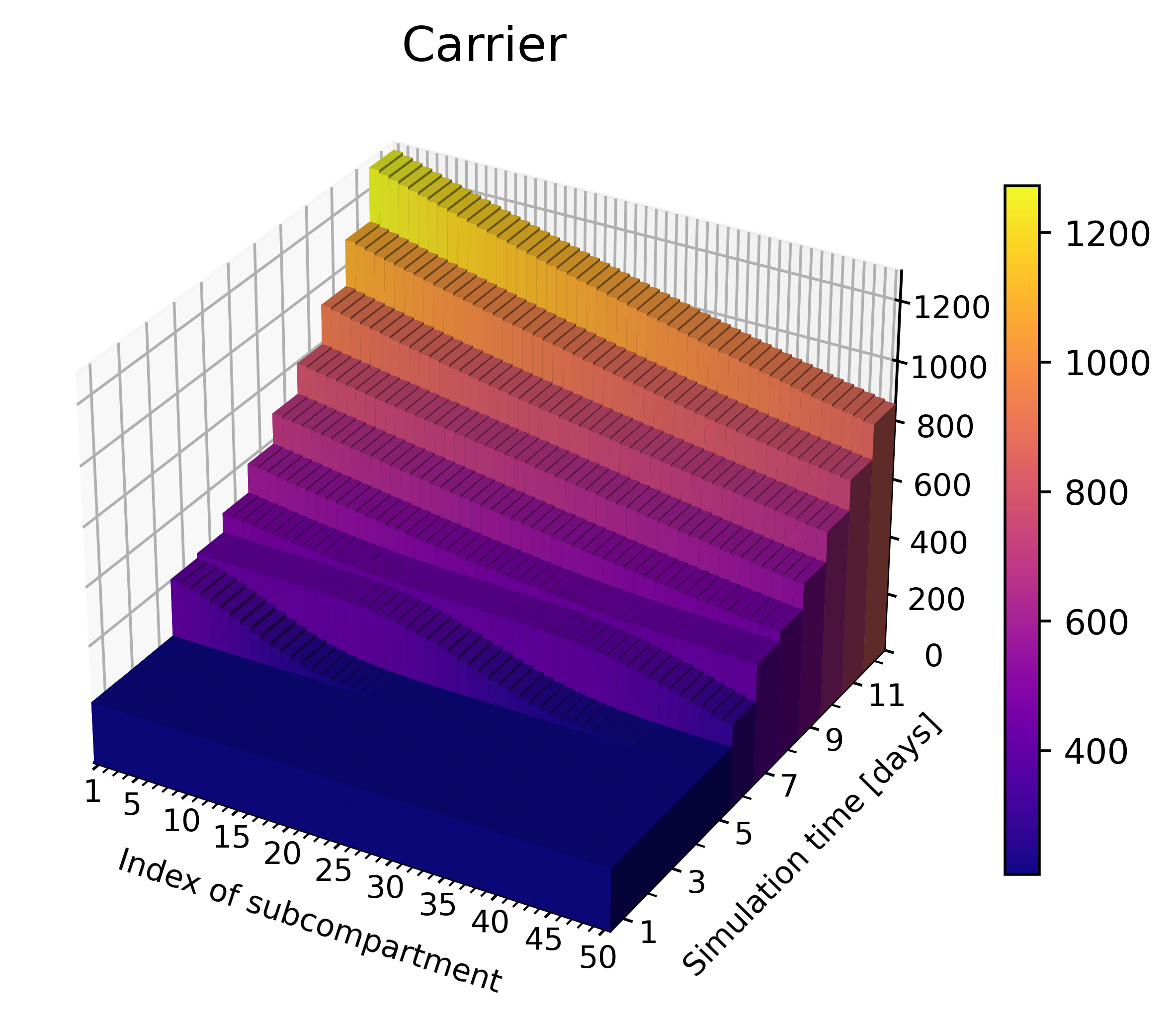}
\end{minipage}  
\caption{\textbf{Simulation results for a reduced latent period.} Daily new transmission (left) and the distribution of individuals in the Carrier compartment in $n=50$ subcompartments for the case of a doubled contact rate $\phi(t)$ after two simulation days and for a \textbf{reduced stay time in the Exposed compartment} $\mathbf{T_E}$, i.e., a reduced latent period. More precisely, we multiply the value defined in~\cref{tab:COVID-19parameters} \textbf{with a factor of} $\mathbf{0.5}$. \MK{Further notation as in~\cref{fig:changepoints}.}}
    \label{fig:resultsshortTEhalved}
\end{figure}
\begin{figure}[htb]
    \centering
    \begin{minipage}[t]{0.47\textwidth}
    \centering
     \includegraphics[width=\textwidth]{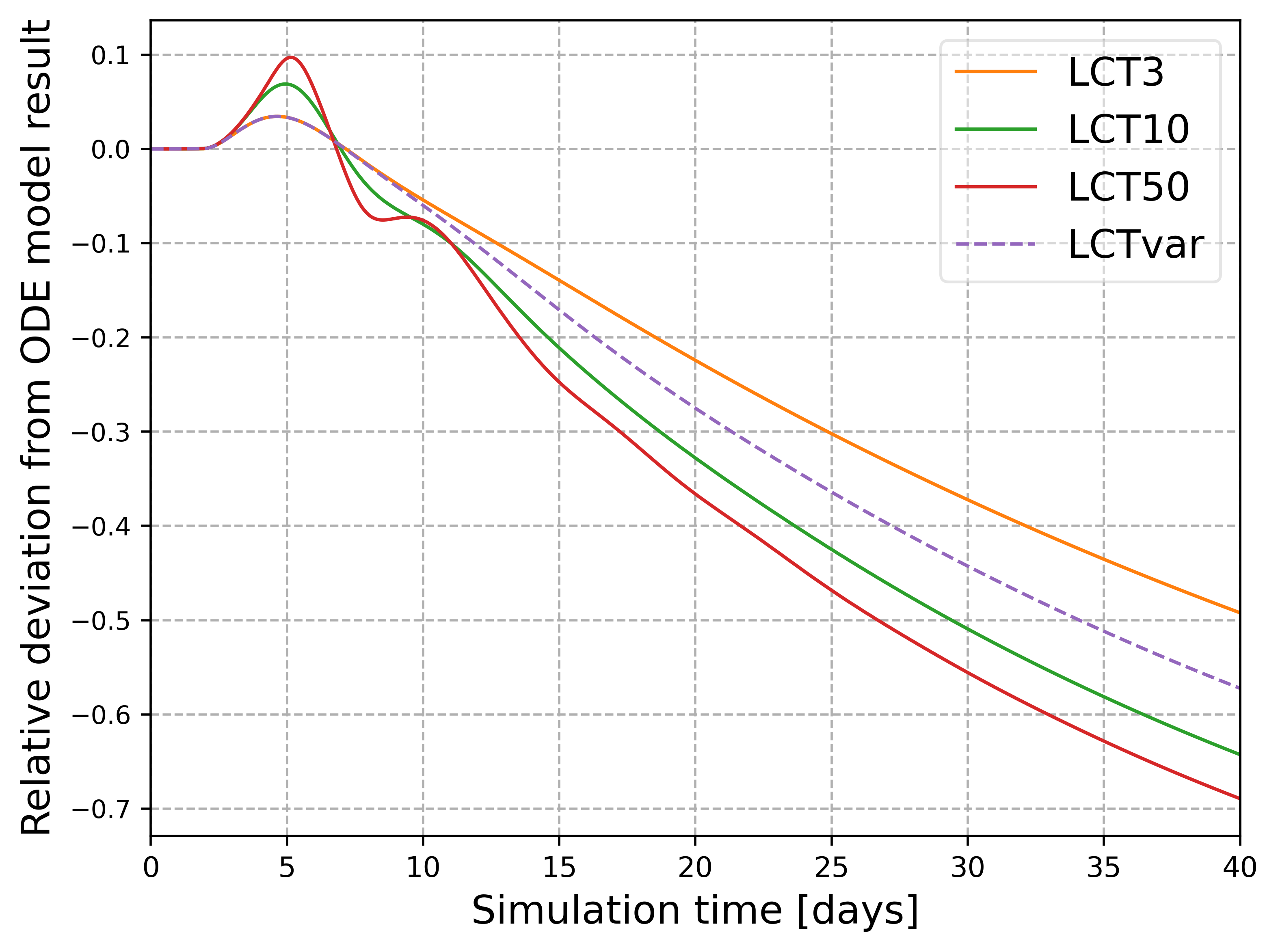}
\end{minipage}
\begin{minipage}[t]{0.47\textwidth}
 \centering
    \includegraphics[width=\textwidth]{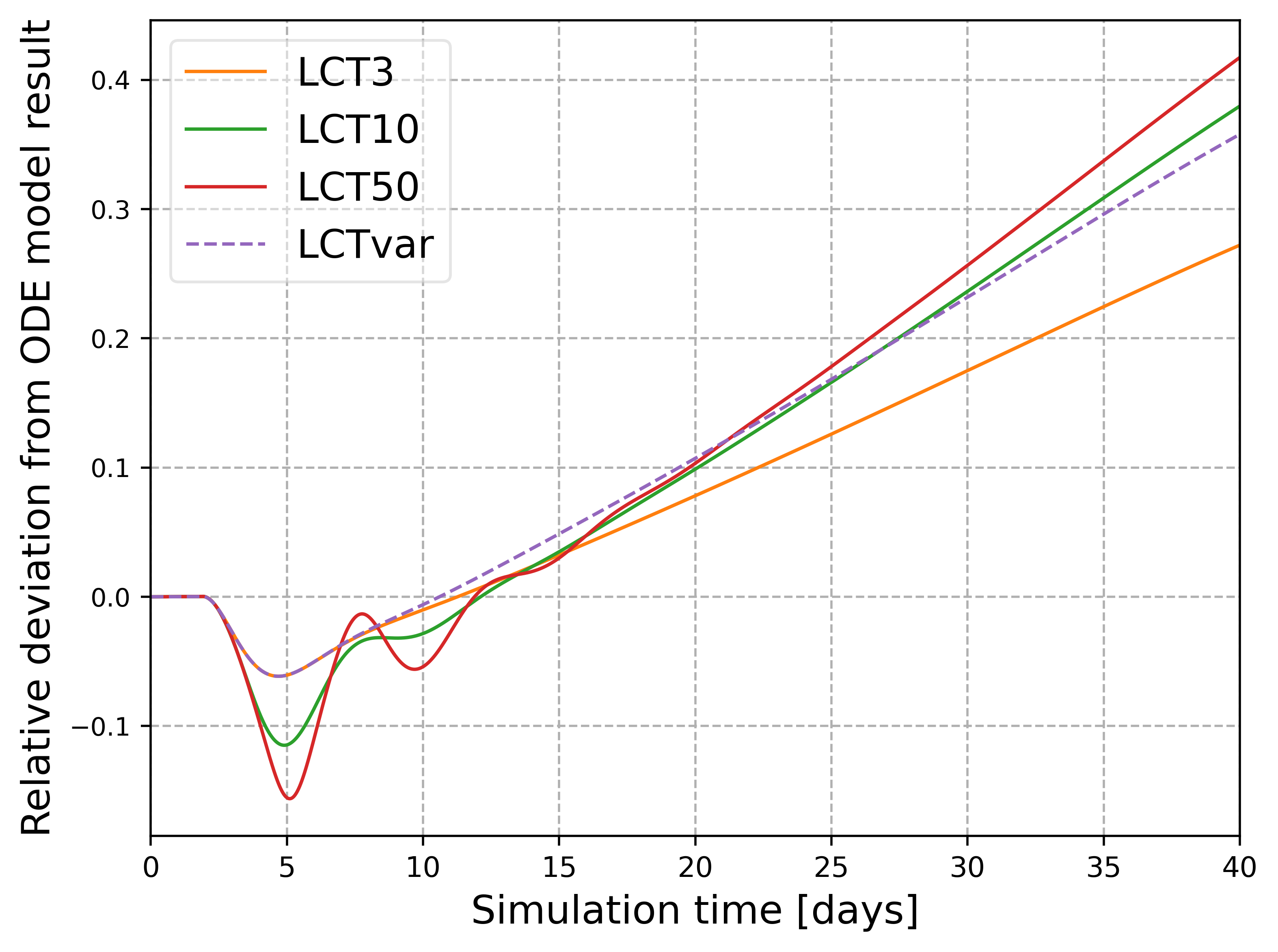}
\end{minipage}  
\caption{\MK{\textbf{Relative difference in daily new transmissions around change points.} Relative comparison of the daily new transmissions of different LCT models compared to a simple ODE model around change points. The contact rate $\phi(t)$ is halved (left) or doubled (right) after the second simulation day. Further notation as in~\cref{fig:changepoints}.}}
    \label{fig:changepoints_rel}
\end{figure}

Particularly in the case of a doubling of the contact rate, a wave pattern is noticeable in~\cref{fig:changepoints} and for the Carrier compartment in~\cref{fig:compartments_changepoints} for high numbers of subcompartments, such as $n=50$. 
This phenomenon is once more the result of a small variance in combination with the parameter choice for the stay times. To gain a deeper understanding of the wave pattern,~\cref{fig:3drisesubcompartments} illustrates the distributions within the subcompartments for LCT models with either $n=10$ or $n=50$ subcompartments and a doubled contact rate.

As can be observed in~\cref{fig:compartments_changepoints}, for $n=50$ the number of Carrier individuals begins to increase after a period of slightly less than $2+T_E=5.335$ days. At this point, the first individuals move from $E$ to $C$ that have been infected after the second simulation day, where the contact rate is increased. This is due to the fact that they \LP{reach} the last subcompartment, as illustrated in~\cref{fig:3drisesubcompartments}. This first rise is induced by the increased contact rate. In the model with $10$ subcompartments, the higher variance of the stay time distribution in $E$ leads to individuals that traverse faster through the chain of subcompartments. Therefore, the first individuals reach compartment $C$ earlier than in the case of $50$ subcompartments.

Once again, with $50$ subcompartments, after approximately $T_C\LP{\approx2.59}$ more days, a decreasing slope in compartment $C$ becomes apparent. 
As observable in~\cref{fig:3drisesubcompartments}, at this time, the first individuals infected after simulation day two leave compartment $C$ and move to either $I$ or $R$. The subsequent larger slope can be attributed to the increased number of transmissions resulting from the higher number of Carriers observed after approximately $2+T_E$ days. The individuals that have been infected during this period begin to transition to the Carrier compartment. Thus, this second rise of the slope is due to the increased number of infectious individuals. The transition from $C$ to $I$ is evident in the plot in~\cref{fig:3drisesubcompartments} and for the compartment Infected, we also get a wave pattern. A longer simulation period reveals that the wave pattern becomes rapidly unrecognizable as the ratio of inflow and outflow in the infectious compartments becomes balanced. For $10$ subcompartments, \LP{ where the variance is higher,} the time when the increased number of individuals in $C$ is driven by the higher contact rate overlaps with the first newly infected individuals leaving $C$ and the time when more infectious individuals drive the increase. The increased numbers are more evenly distributed across the subcompartments. Accordingly, we do not observe a significant wave pattern here.

The wave pattern and the explanation for the waves are highly specific to the particular parameter selection and the relationship between the average stay times. \cref{fig:resultsshortTEhalved} depicts the simulation results for a reduced stay time in the Exposed compartment $T_E$, i.e., a halved latent period compared to~\cref{tab:COVID-19parameters}, for the case where the contact rate was doubled after two simulation days. For this adapted parameter choice, the stay time in the Exposed compartment is shorter than in the Carrier compartment, i.e., $T_E<T_C$. As shown by the daily new transmissions, the shape of the curves differ for each model compared to the original parameter selection, and the wave pattern is less pronounced.

\MK{In~\cref{fig:changepoints,fig:compartments_changepoints}, we have presented the short-term outcomes up to 10 days after the change point. However, due to the discussed wave pattern and the different slopes, it is yet unclear to which extent the different model assumptions influence the daily new transmissions on a short- to mid-term horizon. 
Thus, we provide the relative difference between the daily new transmissions in the LCT models, compared to the ODE model, in~\cref{fig:changepoints_rel} for the same scenario but over a period of up to 40 simulation days. Here, we see that the number of new transmissions differs up to -70~\% (halved contact rate) and~40~\% (doubled contact rate), respectively.}

 \subsubsection{Epidemic peaks and final size}\label{sec:peaks}
\begin{figure}[tb]
    \centering
    \begin{minipage}[t]{0.47\textwidth}
    \centering
     \includegraphics[width=\textwidth]{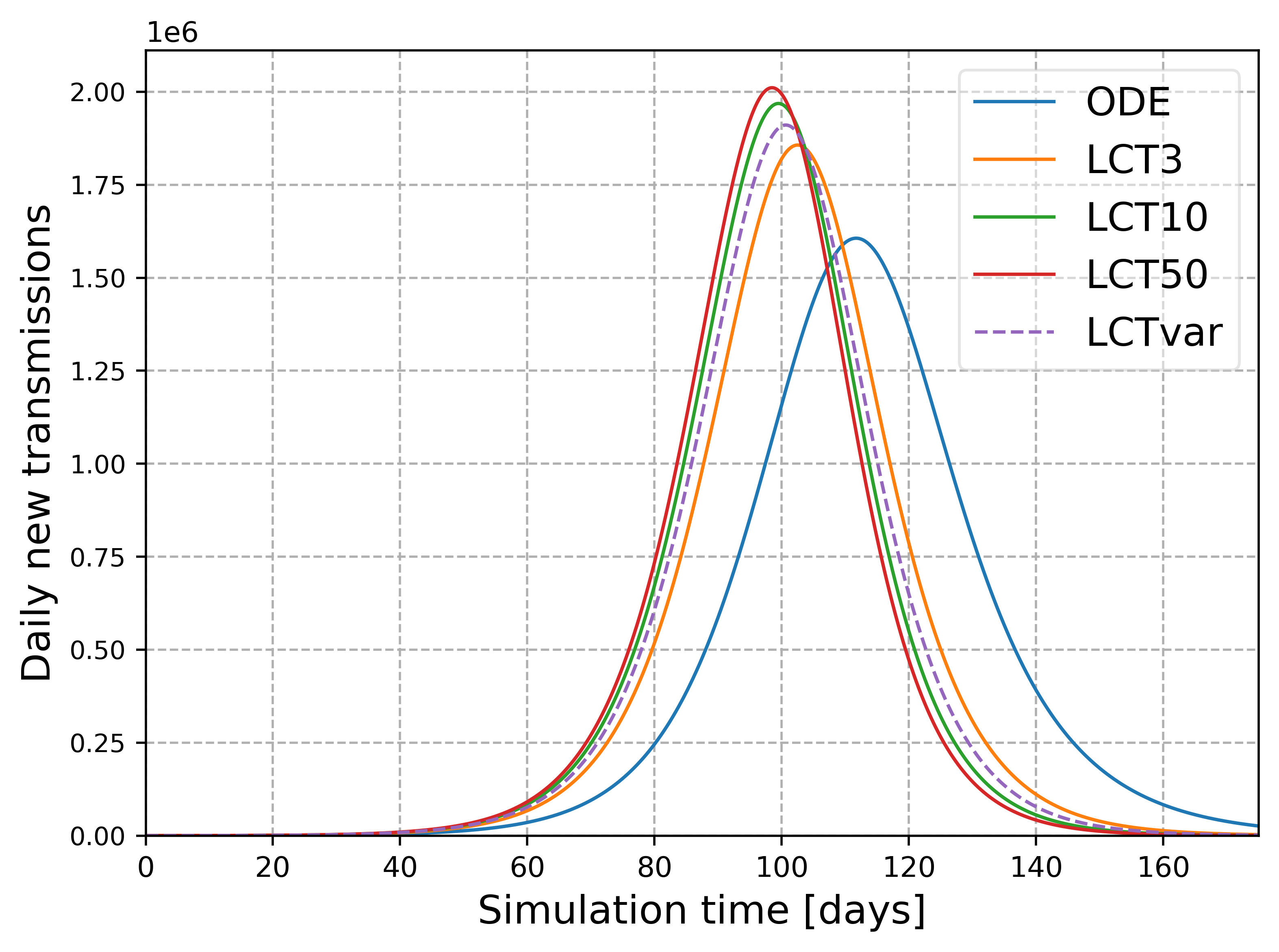}
\end{minipage}
\begin{minipage}[t]{0.47\textwidth}
 \centering
    \includegraphics[width=\textwidth]{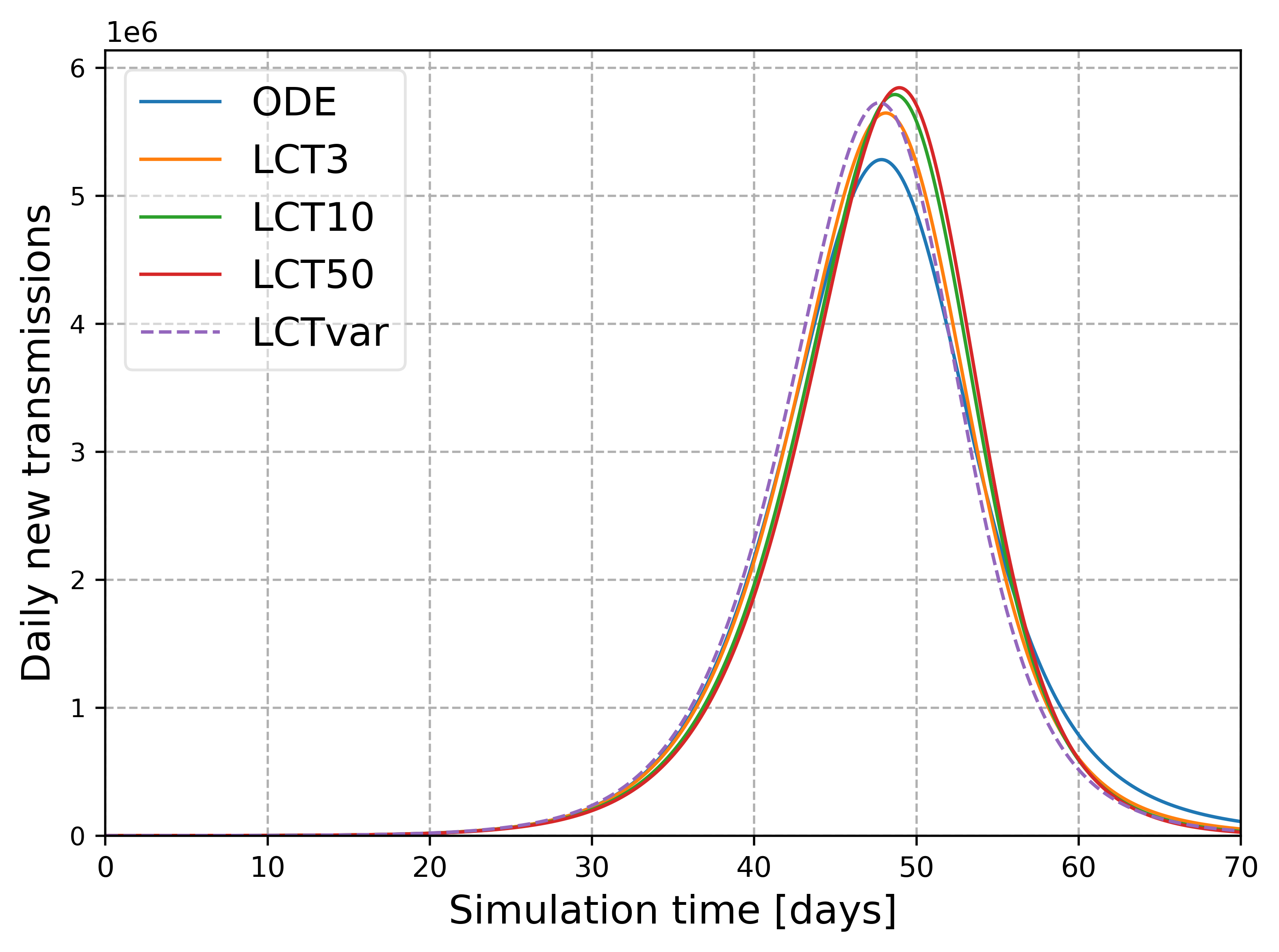}
\end{minipage}  
\caption{\textbf{Daily new transmissions to compare the predicted epidemic peaks.}
Illustration of the predicted peaks of the daily new transmissions of LCT models with different numbers of subcompartments against a simple ODE model. The contact rate is \LP{set such that $\mathcal{R}_{\text{eff}}(\LP{0})\approx2$ }(left) or \LP{$\mathcal{R}_{\text{eff}}(\LP{0})\approx4$ (right)}.
 \MK{Further notation as in~\cref{fig:changepoints}.}}
    \label{fig:peak_new_transmissions}
\end{figure}
\begin{figure}[!bt]
    \centering
    \begin{minipage}[t]{0.47\textwidth}
    \centering
     \includegraphics[width=\textwidth]{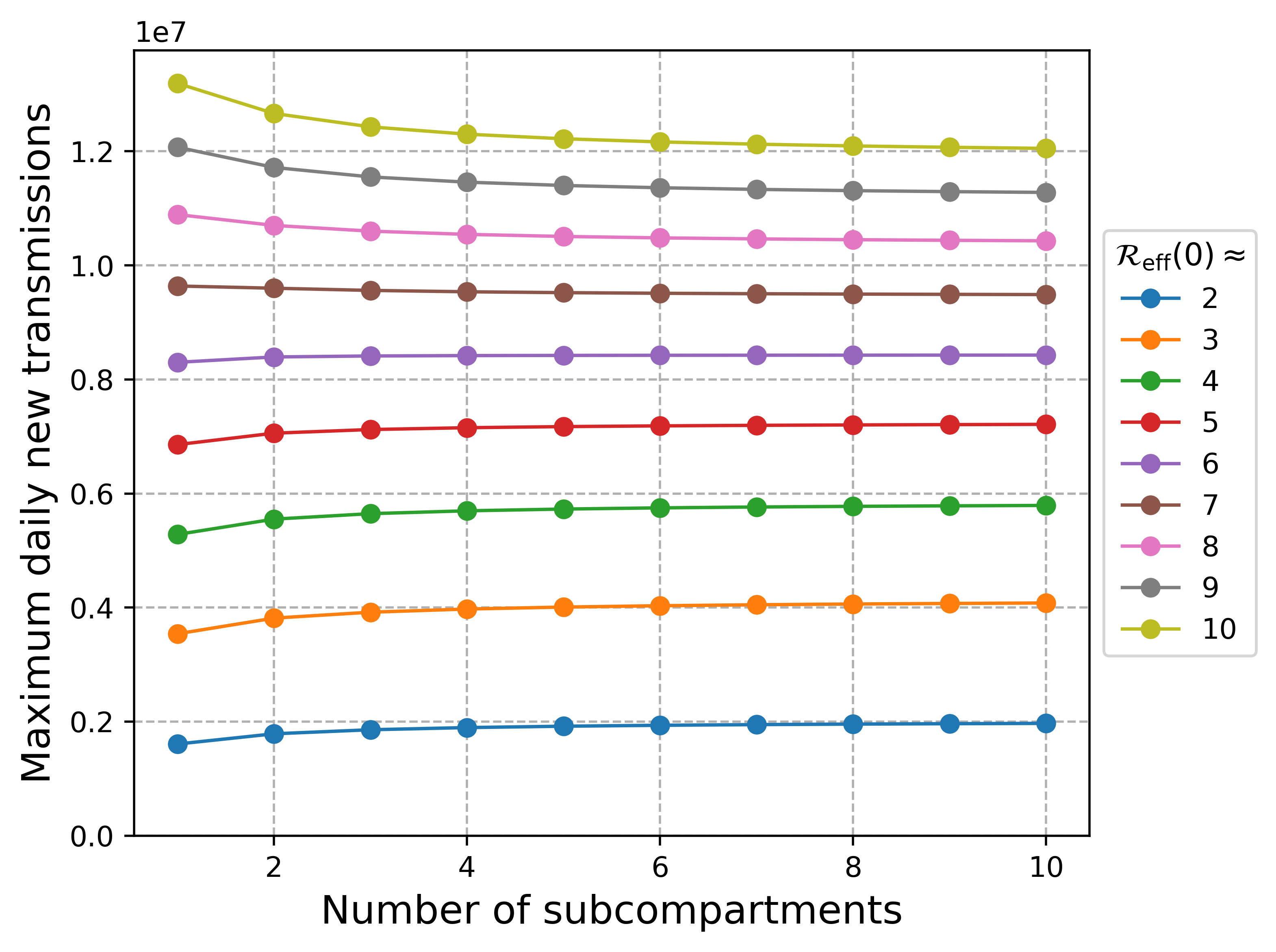}
\end{minipage}
\begin{minipage}[t]{0.47\textwidth}
 \centering
    \includegraphics[width=\textwidth]{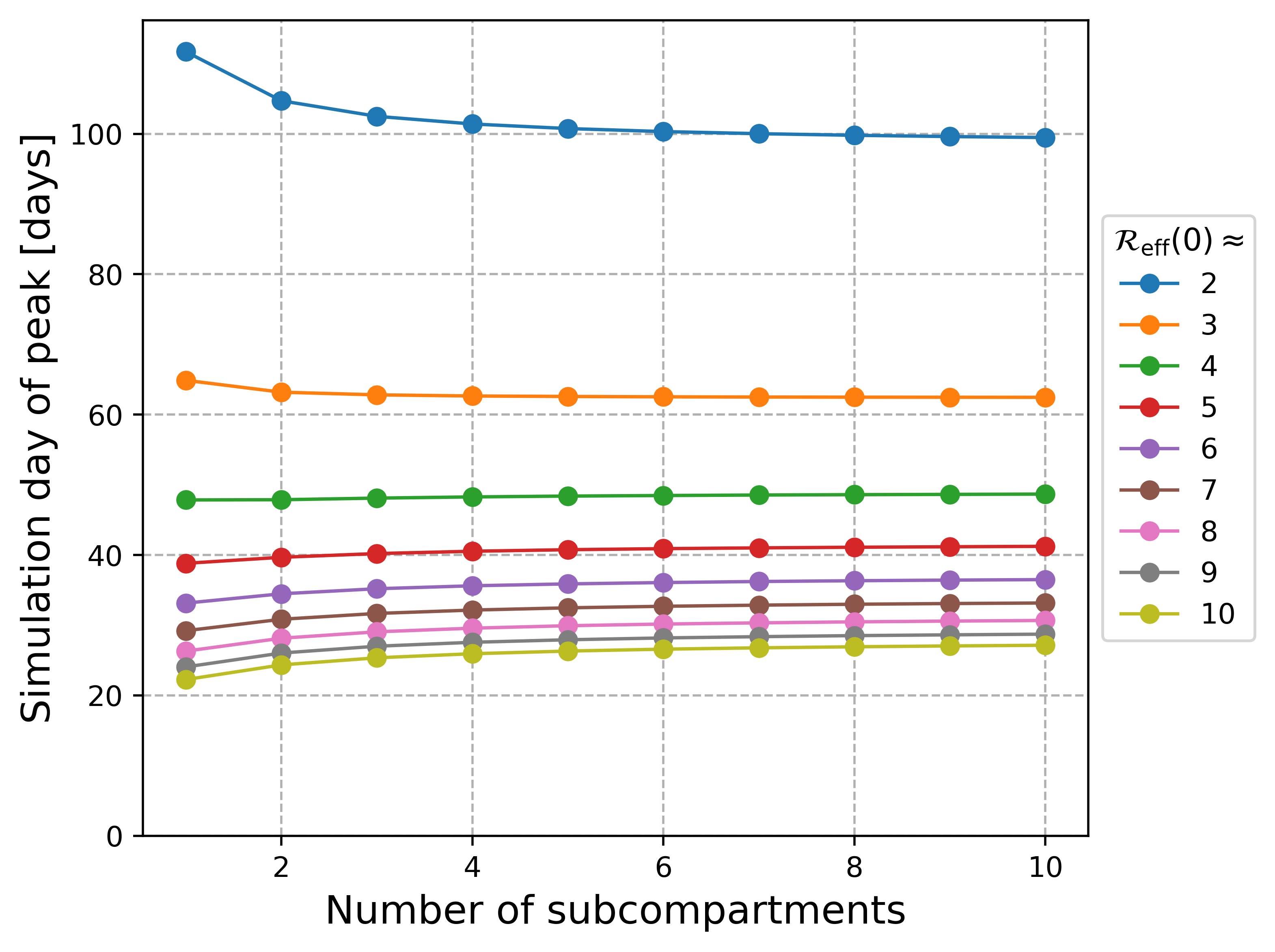}
\end{minipage}  
\caption{\textbf{Comparison of the maximum daily new transmissions and the day of the peak for different reproduction numbers and subcompartments.} Comparison of the predicted maximum number of daily new transmissions and the day, where the maximum number/peak is reached for different assumptions regarding the effective reproduction number \LP{$\mathcal{R}_{\text{eff}}(0)$} and with different choices for the number of subcompartments.}
\label{fig:compare_peaksize_timing}
\end{figure}

One objective of NPIs is to reduce the maximum number of infections in order to prevent the healthcare system from being overloaded and to keep disease dynamics on a manageable level. Based on the model selection, different simulations can lead to different predicted peaks and, thus, different assessments of the same NPIs. We therefore examine the impact of the distribution assumption on the predicted epidemic peaks to assess the level of error when choosing a too simple model. 

\LP{The models are initialized with $500$ exposed individuals and the remaining population in the Susceptible compartment. The exposed individuals are distributed uniformly to the subcompartments. We perform simulations with different values for the contacts $\phi(0)$ and consequently for the effective reproduction number $\mathcal{R}_{\text{eff}}(0)$. The contact rate remains constant throughout the duration of the simulation.}
\begin{figure}[!bt]
    \centering
    \begin{minipage}[t]{0.47\textwidth}
    \centering
     \includegraphics[width=\textwidth]{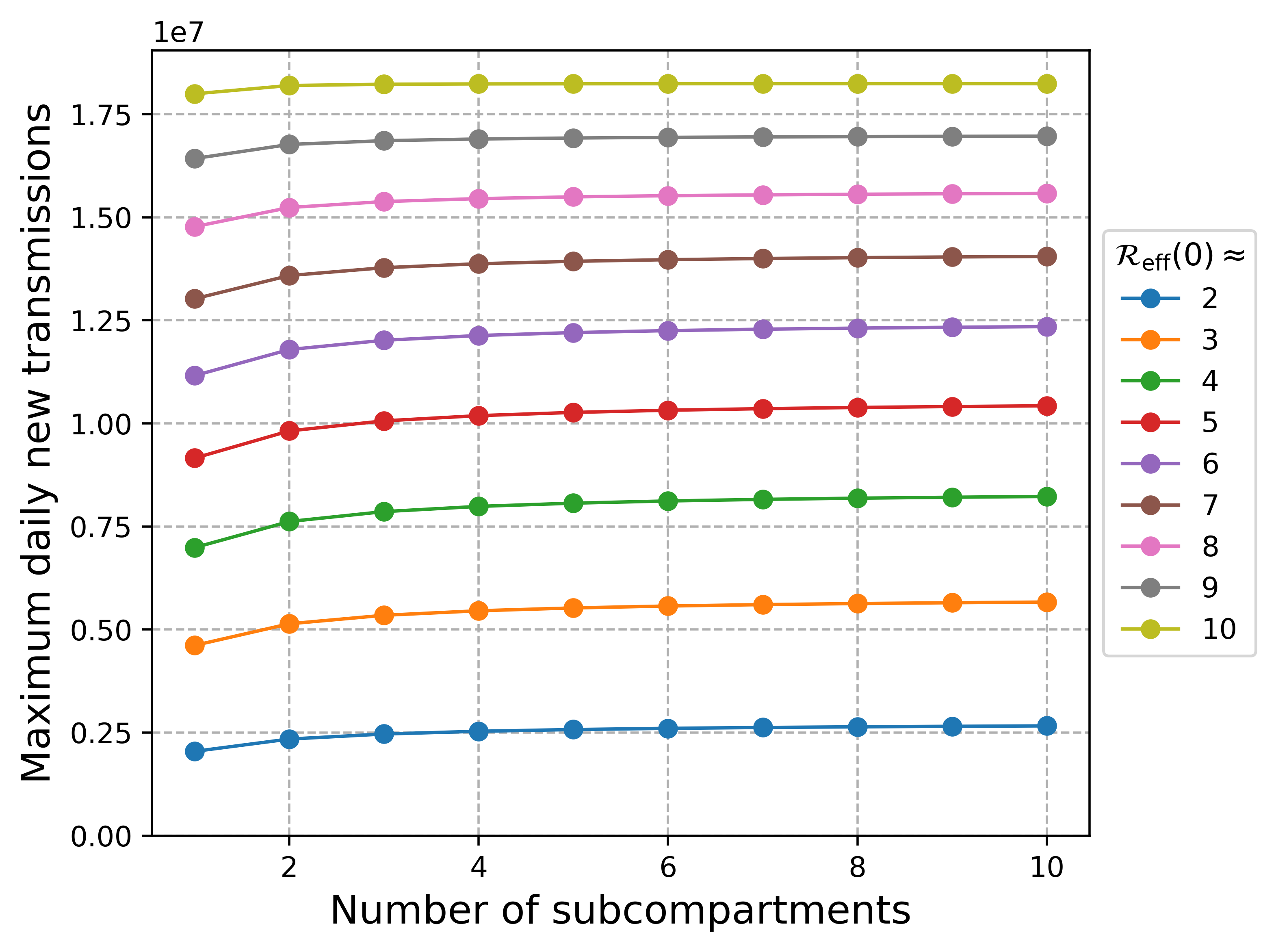}
\end{minipage}
\begin{minipage}[t]{0.47\textwidth}
 \centering
    \includegraphics[width=\textwidth]{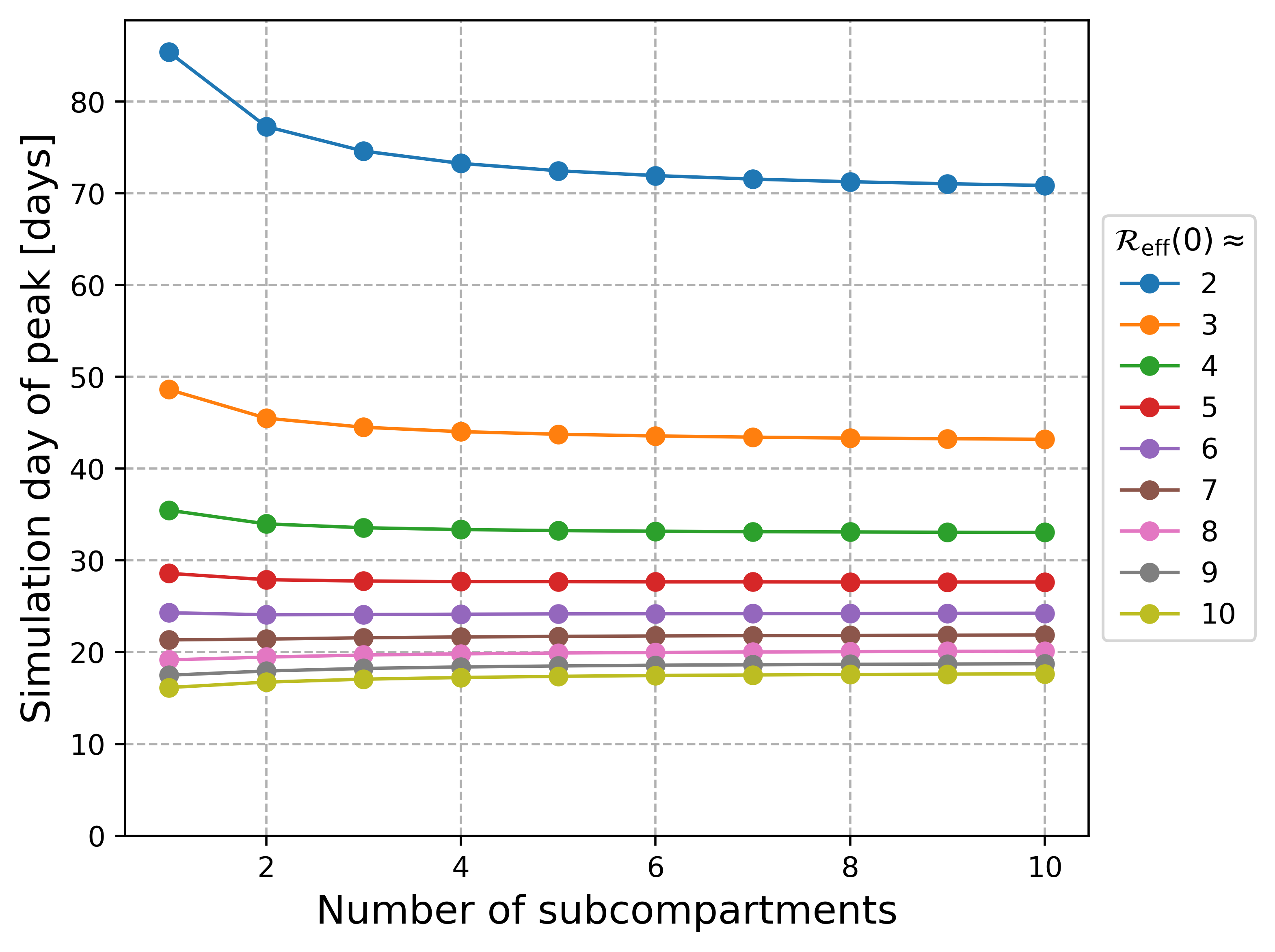}
\end{minipage}  
\caption{\textbf{Comparison of the maximum daily new transmissions and the day of the peak for different reproduction numbers and subcompartments for a reduced latent period.} Comparison of the predicted maximum number of daily new transmissions and the day, where the maximum number/peak is reached for different assumptions regarding the effective reproduction number \LP{$\mathcal{R}_{\text{eff}}(0)$} and with different choices for the number of subcompartments. Here we consider a \textbf{reduced stay time in the Exposed compartment} $\mathbf{T_E}$, i.e., a reduced latent period. More precisely, we multiply the value used in~\cref{fig:compare_peaksize_timing} \textbf{with a factor of} $\mathbf{0.5}$.}
\label{fig:compare_peaksize_timing_halved}
\end{figure}
\begin{figure}[!bt]
    \centering
    \begin{minipage}[t]{0.47\textwidth}
    \centering
     \includegraphics[width=\textwidth]{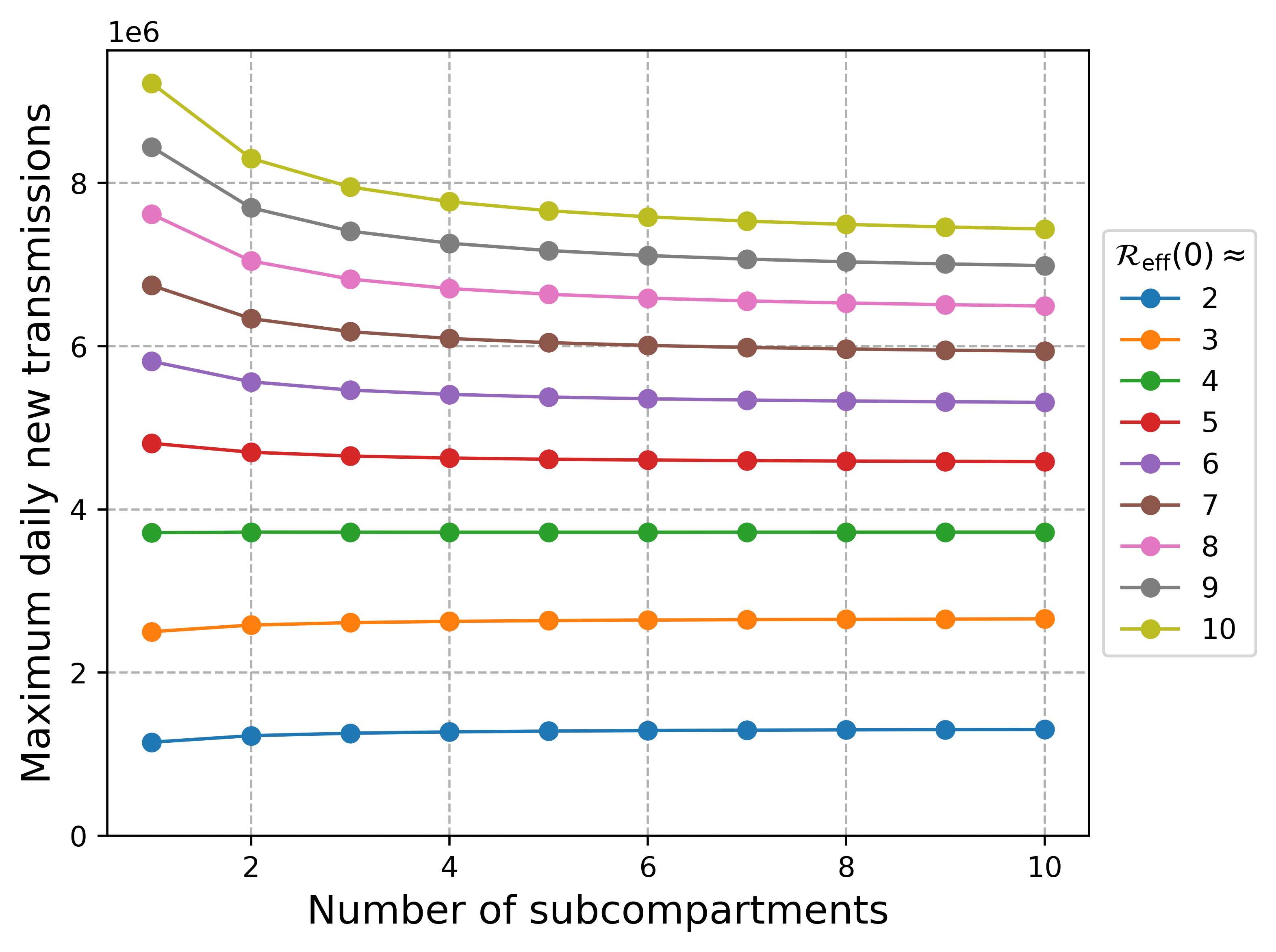}
\end{minipage}
\begin{minipage}[t]{0.47\textwidth}
 \centering
    \includegraphics[width=\textwidth]{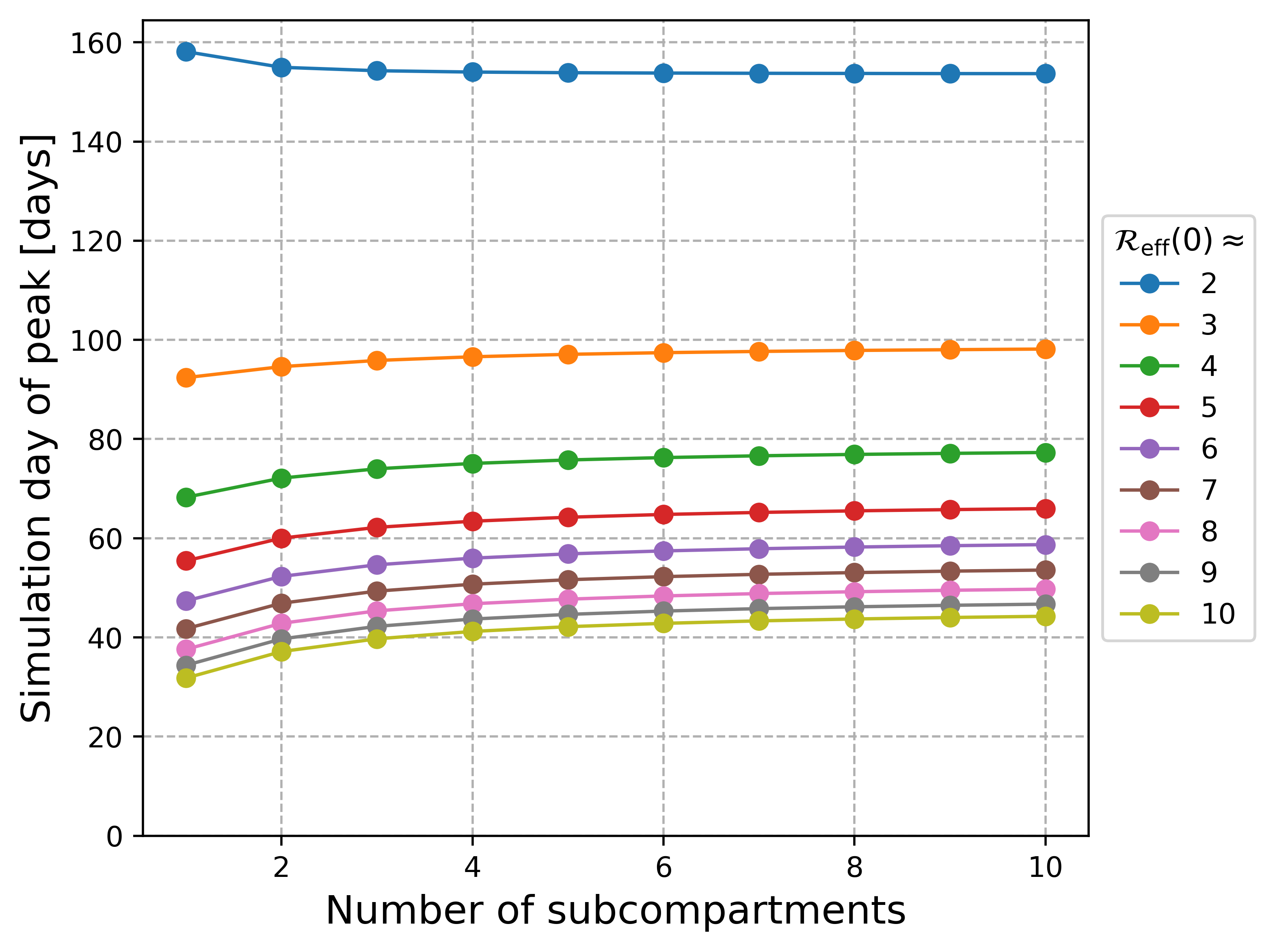}
\end{minipage}  
\caption{\textbf{Comparison of the maximum daily new transmissions and the day of the peak for different reproduction numbers and subcompartments for an extended latent period.} Comparison of the predicted maximum number of daily new transmissions and the day, where the maximum number/peak is reached for different assumptions regarding the effective reproduction number \LP{$\mathcal{R}_{\text{eff}}(0)$} and with different choices for the number of subcompartments. Here we consider an \textbf{extended stay time in the Exposed compartment} $\mathbf{T_E}$. More precisely, we multiply the value used in~\cref{fig:compare_peaksize_timing} \textbf{with a factor of} $\mathbf{2}$.}
\label{fig:compare_peaksize_timing_doubled}
\end{figure}

The results regarding the daily new transmissions for $\mathcal{R}_{\text{eff}}(\LP{0})\approx 2$ and $\mathcal{R}_{\text{eff}}(\LP{0})\approx 4$ are depicted in~\cref{fig:peak_new_transmissions}. For both reproduction numbers, a comparison of the LCT models shows that the maximum value of the curve increases with the number of subcompartments selected. The predicted peaks by the ODE model are notably lower than those predicted by the LCT model with three subcompartments. The relation of the times at which the models predict the maximum of the epidemic differs for the two effective reproduction numbers. In the case of $\mathcal{R}_{\text{eff}}(\LP{0})\approx 2$, we observe that the epidemic peak is reached earlier the higher the number of subcompartments is chosen. Conversely, for $\mathcal{R}_{\text{eff}}(\LP{0})\approx 4$, the peak is reached the later, the higher the number of subcompartments.

To compare the behavior for even more choices of $\mathcal{R}_{\text{eff}}\LP{(0)}$,~\cref{fig:compare_peaksize_timing} presents the predicted maximum value of the daily new transmissions and the time, where the epidemic peak is reached, for different reproduction numbers and numbers of subcompartments. Firstly, looking at one model with a fixed number of subcompartments, we observe that the higher the effective reproduction number is set, the greater the maximum attained value and the earlier that value \AW{is} reached. Comparing different numbers of subcompartments, we find that for reproduction numbers greater or equal to $7$, there is a tendency to lower maxima for increasing numbers of subcompartments. For reproduction numbers below $7$ there is a tendency to higher maxima for increasing numbers of subcompartments. In particular, for reproduction numbers greater or equal to $7$, the ODE model predicts a higher epidemic peak than the considered LCT models and a lower peak epidemic peak otherwise. For reproduction numbers less than or equal to $3$, there is a tendency to reach the peak earlier, while for reproduction numbers greater than $3$, there is a tendency to reach the peak later as the number of subcompartments increases. In addition, for a fixed reproduction number, the absolute value of the slope in both plots decreases with an increase in the number of subcompartments. Accordingly, the difference in peak size and timing between two consecutive numbers of subcompartments is negligible if the numbers are large. If the number of subcompartments is relatively low, the discrepancy in peak size and timing between each consecutive number of subcompartments is more pronounced.
However, an essential finding is that we cannot make a generally valid statement for all reproduction numbers about the relationship between the time and the size of the peak for different subcompartments.

In addition to the dependence on the reproduction number, the relation of the size and timing of epidemic peaks for different subcompartment numbers is dependent on the selected model parameters. In~\cref{fig:compare_peaksize_timing_halved} and in~\cref{fig:compare_peaksize_timing_doubled}, we provide analyses with a halved and a doubled stay time $T_E$, respectively, compared to~\cref{tab:COVID-19parameters}. We notice that the thresholds at which the ODE model predicts a higher \AW{or earlier} epidemic peak are shifting. 

For the case of a halved stay time in the Exposed compartment in~\cref{fig:compare_peaksize_timing_halved}, the ODE model has the lowest epidemic peak for all reproduction numbers. We find that the epidemic peak is reached later as the number of subcompartments increases for reproduction numbers bigger than \LP{$7$}. For the original case, this threshold was a reproduction number of $3$. In~\cref{fig:compare_peaksize_timing_doubled} with a doubled latent period, the epidemic peak is reached later for higher subcompartment numbers and reproduction numbers larger than $2$. Moreover, for reproduction numbers greater or equal to $5$, the ODE model predicts a higher epidemic peak than LCT models with more subcompartments.
Although omitted in the plots, we observed that for even longer latent periods, the ODE model predicts higher peaks for all reproduction numbers.
Therefore, we find that the relation of the size and timing of the epidemic peaks for different numbers of subcompartments is dependent of the parameter choices for, e.g., the relation between latent and infectious period and, in particular, on the reproduction number.

Let us consider our findings in the context of some existing studies dealing with epidemic peaks. Wearing et al.~\cite{wearing_appropriate_2005} demonstrate that an ODE-SIR model predicts a slower initial increase in the number of Infectives than a corresponding LCT model with $100$ subcompartments. Additionally, the authors illustrate that the maximum number of infected individuals is significantly lower for the ODE model. In their study, Wearing et al. selected a reproduction number of five. Blythe et al.~\cite{blythe_distributed_1988} analyze a model for HIV with birth and death rates and different distributions for the infectious and the incubation period.
The authors observed that the peak occurred later and was higher for distribution assumptions with lower variances. Lastly, Blyuss et al.~\cite{blyuss_effects_2021} examine a model for the spread of COVID-$19$ that incorporates subcompartments for the Exposed and the infectious compartments. The authors observe that the epidemic peak is reached earlier when the number of subcompartments in the Exposed compartment is increased. Furthermore, the maximum value of infectious individuals is determined by the number of subcompartments for $I$ and is observed to increase with an increase in the number of subcompartments. Accordingly, Blyuss et al.~\cite{blyuss_effects_2021} observe that the epidemic peak is reached earlier with higher numbers of subcompartments, whereas Blythe et al.~\cite{blythe_distributed_1988} observe a later peak. All studies conclude that the assumption of higher numbers of subcompartments results in a larger predicted epidemic peak. The results of our simulations indicate that the timing and size of the epidemic peak are significantly influenced by the effective reproduction number and the specific parameter values assumed. In light of these considerations, neither of the studies can be regarded as making universally valid statements.

\begin{remark}
Diekmann et al. provide in their work~\cite{diekmann_discrete-time_2021} a different solution to circumvent the complexity associated with IDE models. They formulate and analyze a discrete-time version of IDE models for epidemic outbreaks. One central finding is that simple ODE models predict smaller peak sizes than models with a fixed length of the latent and the infectious period. As observed in~\cref{sec:properties}, for $n\to\infty$, the LCT model also converges to fixed stay times. The authors fix the reproduction number at $2.5$ and additionally use the same initial growth rate for both models under comparison. This may lead to differing model parameters, e.g., regarding the stay time in the infectious compartment, see~\cite[Appendix]{diekmann_discrete-time_2021}. Future research could also consider which results are obtained with a fixed initial growth rate combined with different values of the reproduction number.
\end{remark}

\begin{figure}[!bt]
    \centering
    \includegraphics[width=0.97\textwidth]{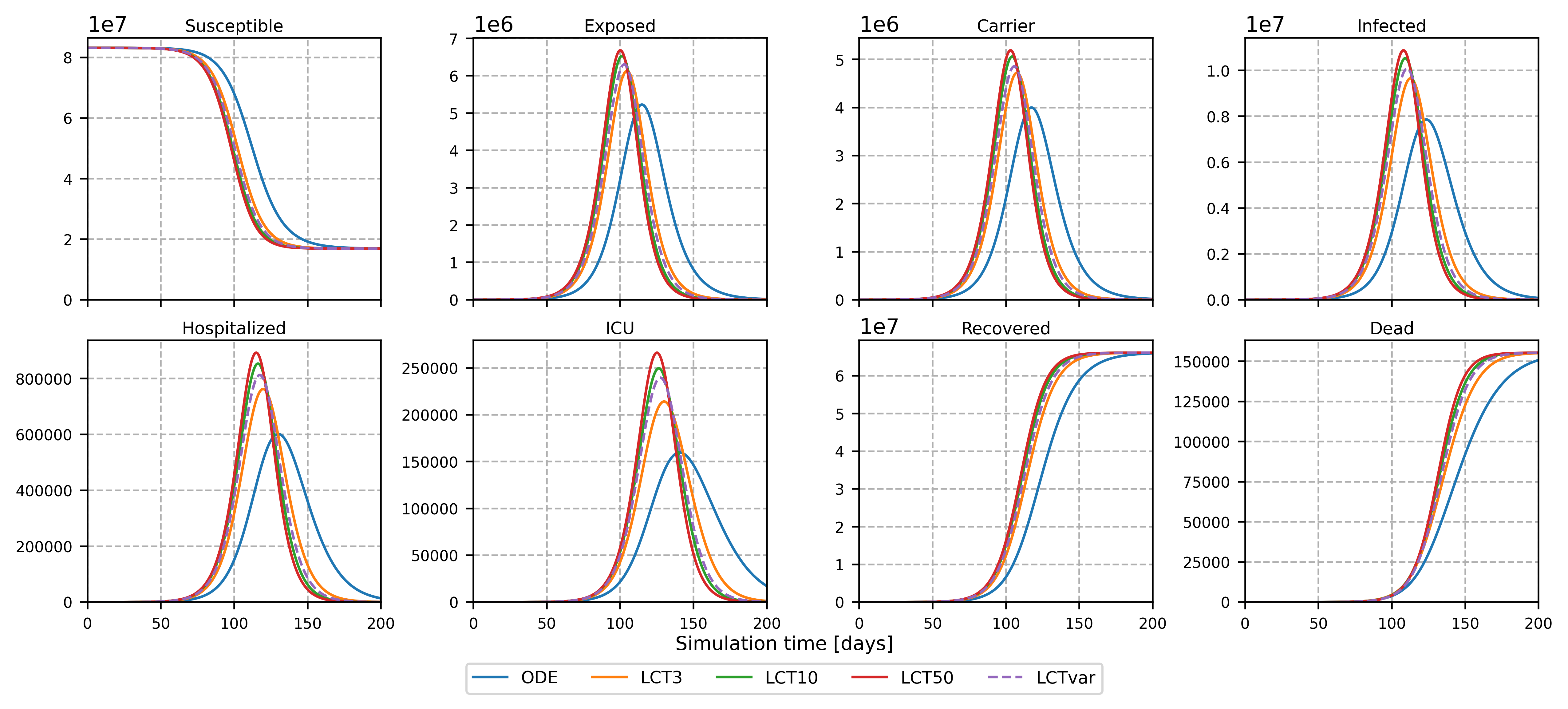}
\caption{\textbf{Compartment sizes to compare predicted epidemic peaks.} 
Presentation of the simulation results for the number of individuals in the eight compartments of the ODE model in comparison with LCT models with different numbers of subcompartments \LP{and $\mathcal{R}_{\text{eff}}(\LP{0})\approx2$}. \MK{Notation as in~\cref{fig:changepoints}.}
}
    \label{fig:compartments_peak_2.0}
\end{figure}

\cref{fig:compartments_peak_2.0} depicts the predicted number of individuals in each compartment for $\mathcal{R}_{\text{eff}}(\LP{0})\approx 2$. The figure shows the significance of the assumption regarding the number of subcompartments for the prediction of the maximum capacity needed in hospitals or the maximum number of intensive care beds required.
\LP{For $\mathcal{R}_{\text{eff}}(\LP{0})\approx 2$, we observe that models with more subcompartments predict higher peak values. Consequently, when using the ODE model to predict the required hospital and intensive care bed capacity, the estimated number of beds will be lower than the predictions from the LCT models. Given that the Erlang distributed stay time assumption employed in the LCT models is regarded as more realistic, the true required bed capacity may in fact be closer to the peak predicted by one of these models. Therefore, relying on the exponential stay time assumption in the ODE model may result in an underestimation of the necessary capacity, leading to an increased risk of the healthcare system being overwhelmed.}
A similar result is also obtained in~\cite{kissler_projecting_2020}. However, our findings regarding the predicted peaks for the daily new transmission suggest that this may not always be the case and that we may obtain an opposite behavior for a different parameterization.
\begin{table}[bt]
\centering
\def\arraystretch{1.0}
\begin{tabular}{>{\columncolor{gray!20}}c >{\columncolor{gray!20}}c | >{\centering}p{2cm} | >{\centering}p{2cm}>{\centering}p{2cm}>{\centering}p{2cm}>{\centering\arraybackslash}p{2cm} }
\toprule
   \rowcolor{gray!20} && ODE & LCT$3$ & LCT$10$ & LCT$50$ & \vphantom{0}{\LP{LCTvar} } \\
\midrule[0.6pt]
    $\mathcal{R}_{\text{eff}}(\LP{0})\approx2$ & final size & \vphantom{0}\LP{$66277719$} & \vphantom{0}\LP{$66271490$} & \vphantom{0}\LP{$66268730$} & \vphantom{0}\LP{$66267697$} & \vphantom{0}\LP{$66269604$}\\ 
      &  rel. diff. & -- & \vphantom{0}\LP{$-0.0094\%$} & \vphantom{0}\vphantom{0}\LP{$-0.0136\%$} & \vphantom{0}\LP{$-0.0151\%$} & \vphantom{0}\LP{$-0.0122\%$}\\
\midrule
   $\mathcal{R}_{\text{eff}}(\LP{0})\approx4$ & final size & \vphantom{0}\LP{$81507936$} & \vphantom{0}\LP{$81506779$} & \vphantom{0}\LP{$81506459$} & \vphantom{0}\LP{$81506385$} & \vphantom{0}\LP{$81506510$}\\
   &  rel. diff.  & -- & \vphantom{0}\LP{$-0.0014\%$} & \vphantom{0}\LP{$-0.0018\%$} & \vphantom{0}\LP{$-0.0019\%$} & \vphantom{0}\LP{$-0.0018\%$}\\
\midrule
    $\mathcal{R}_{\text{eff}}(\LP{0})\approx10$ & final size & \vphantom{0}\LP{$83151260$} & \vphantom{0}\LP{$83151255$} & \vphantom{0}\LP{$83151254$} & \vphantom{0}\LP{$83151254$} & \vphantom{0}\LP{$83151254$}\\
   &  rel. diff. &-- & \vphantom{0}\LP{$-0.000006\%$} & \vphantom{0}\LP{$-0.000007\%$} & \vphantom{0}\LP{$-0.000007\%$} & \vphantom{0}\LP{$-0.000007\%$}\\
\bottomrule
\end{tabular}
\caption{\textbf{Predicted final sizes of different LCT models and relative differences to the ODE result.} 
Comparison of the predicted final size $N(0)-S_{\infty}$ for different effective reproduction numbers \LP{$\mathcal{R}_{\text{eff}}(\LP{0})$} and numbers of subcompartments. The final size is rounded to the nearest integer in each case. Additionally, the relative differences of the LCT models to the result obtained with the ODE models are shown for each reproduction number. For all cases, the final size is calculated using the number of Susceptibles at $t=500$, as the compartment size changes not significantly anymore.}
\label{tab:final_size}
\end{table}

For the Susceptible and Recovered compartments, it can be observed that the curves reach a comparable level for all subcompartment selections at the end of the simulation, although the curves differ in their shape. 
This leads us to a comparison of the final size, which is the total number of individuals who become infected over the course of the epidemic
\begin{align*}
    N(0)-S_{\infty},\quad\text{ with }\quad S_{\infty}=\lim_{t\to\infty}{S(t)};
\end{align*}
cf.~\cite{brauer_mathematical_2019}. \cref{tab:final_size} shows absolute values for the final size for the effective reproduction numbers $2$, $4$ and $10$ for various assumptions regarding the number of subcompartments. To compare the predicted final size of the epidemic from different models, the relative deviations of the LCT models to the result of the ODE model are given for each reproduction number.
We see that, indeed, the relative differences between the ODE model and the LCT models are close to zero for all subcompartment choices. 

In~\cite{ma_generality_2006}, it is stated that the final size is independent of the number of subcompartments used for a SEIR model in the latent and infectious state. \LP{Additionally, the author of~\cite{Brauer_Age-of-infection_2008} identifies a final size relation for an IDE-based model that remains independent of assumptions regarding the stay time distribution and depend only on the reproduction number and initial conditions.} Our numerical experiments align with \LP{these findings, demonstrating that the final size in our model is dependent only on the reproduction number and is unaffected by the number of subcompartments chosen}.
 \subsection{Impact of age resolution}\label{sec:ageresolution}
\begin{figure}[htb]
    \centering
    \includegraphics[width=0.375\textwidth]{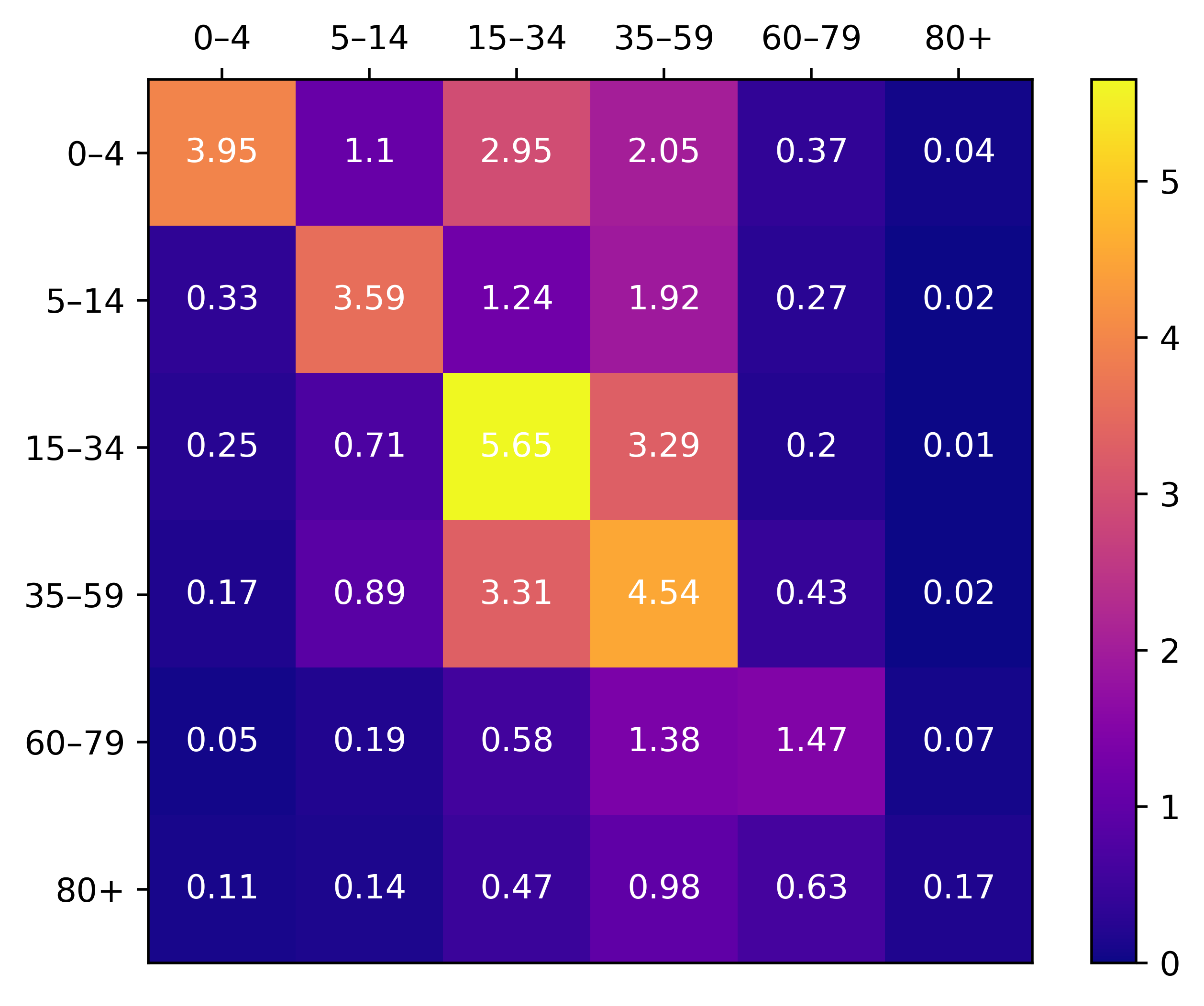}
\caption{\textbf{Age-resolved contact matrix for Germany.} For each RKI age group represented in the rows, the average number of daily contacts with the respective age group is provided in the columns. The contact data are based on~\cite{prem_projecting_2017} combined with~\cite{fumanelli_inferring_2012}.}
    \label{fig:contacts}
\end{figure}
\MK{In the previous sections, we have excluded age groups and demographic resolution to separate the effect of the distribution assumption in ODE and different LCT models from the effect of demographic resolution. In this section, we consider an age-resolved modeling approach, showcasing fundamental differences in the outcomes compared to a non-age-resolved approach.} For this purpose, we compare a model not stratified by age (using only one age group for the total population) with a model that is resolved according to the age groups used in the RKI data. \MK{The findings of the previous sections will then be driven by the age groups that contribute most to a particular situation of epidemic dynamics.}

The (baseline) contact pattern for Germany shown in~\cref{fig:contacts} is based on~\cite{prem_projecting_2017} combined with~\cite{fumanelli_inferring_2012} for school contacts. For more details, also see~\cite{kuhn_assessment_2021}. The subpopulation-weighted average of the total number of contacts is $7.69129$. We use the LCT$10$ model to demonstrate the impact of the age resolution. In all simulations, the number of individuals in the Exposed compartment is set to $100$ and their number is distributed uniformly across the subcompartments. The remainder of the German population is assigned to the Susceptible compartment. We consider two distinct age-resolved simulations and one simulation without age resolution:
\begin{enumerate}[I]
    \item \label{item:uni} \textbf{A15--34 scenario:} In the first simulation, we assume that all initial infections circulate within the age group A$15$--$34$. Accordingly, $100$ Exposed individuals are assigned to the age group of $15$--$34$ years. 
    \item \label{item:retirement} \textbf{A80+ scenario:} In the second simulation, we assume that all initial infections circulate within the age group of the oldest people. Accordingly, $100$ Exposed individuals are assigned to the age group of $80+$ years. 
    \item\label{item:noage} \textbf{Non-age-resolved scenario:} For comparison, the simulations are run without age groups. 
\end{enumerate}
Here, the idea is that if either scenario~\ref{item:uni} or~\ref{item:retirement} occurs in reality, both are translated into case~\ref{item:noage} if we use a model without age resolution. We show that it can be significant for the qualitative and quantitative simulation results to capture differences in the age groups in the model when confronted with contact patterns similar to Germany. Note that substantial differences are expected if age groups mix more homogeneously and if the off-diagonal entries of the contact matrix are more pronounced or if diagonal entries differ less among each other.

\begin{figure}[!bt]
    \centering
    \includegraphics[width=0.97\textwidth]{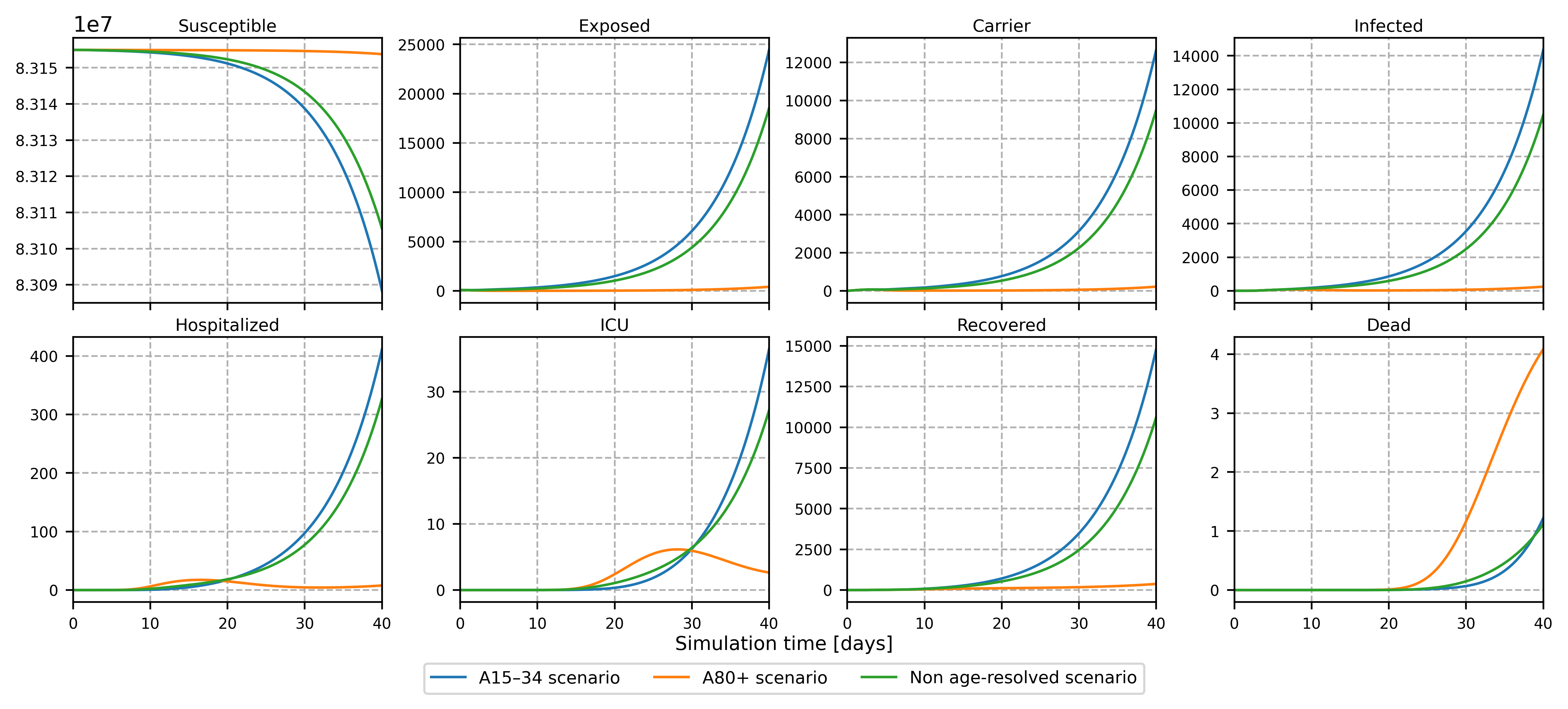}
\caption{\textbf{Comparison of different scenarios with and without age resolution.} Comparison of a non-age-resolved model with the results of an age-resolved model, where the $100$ initially Exposed individuals are assumed to be in different age groups. We use $n=10$ subcompartments for each compartment $Z\in\mathcal{A}$. Age-resolved simulation results are aggregated for visualization purposes.}
    \label{fig:age}
\end{figure}
\begin{figure}[!bt]
    \centering
    \includegraphics[width=0.97\textwidth]{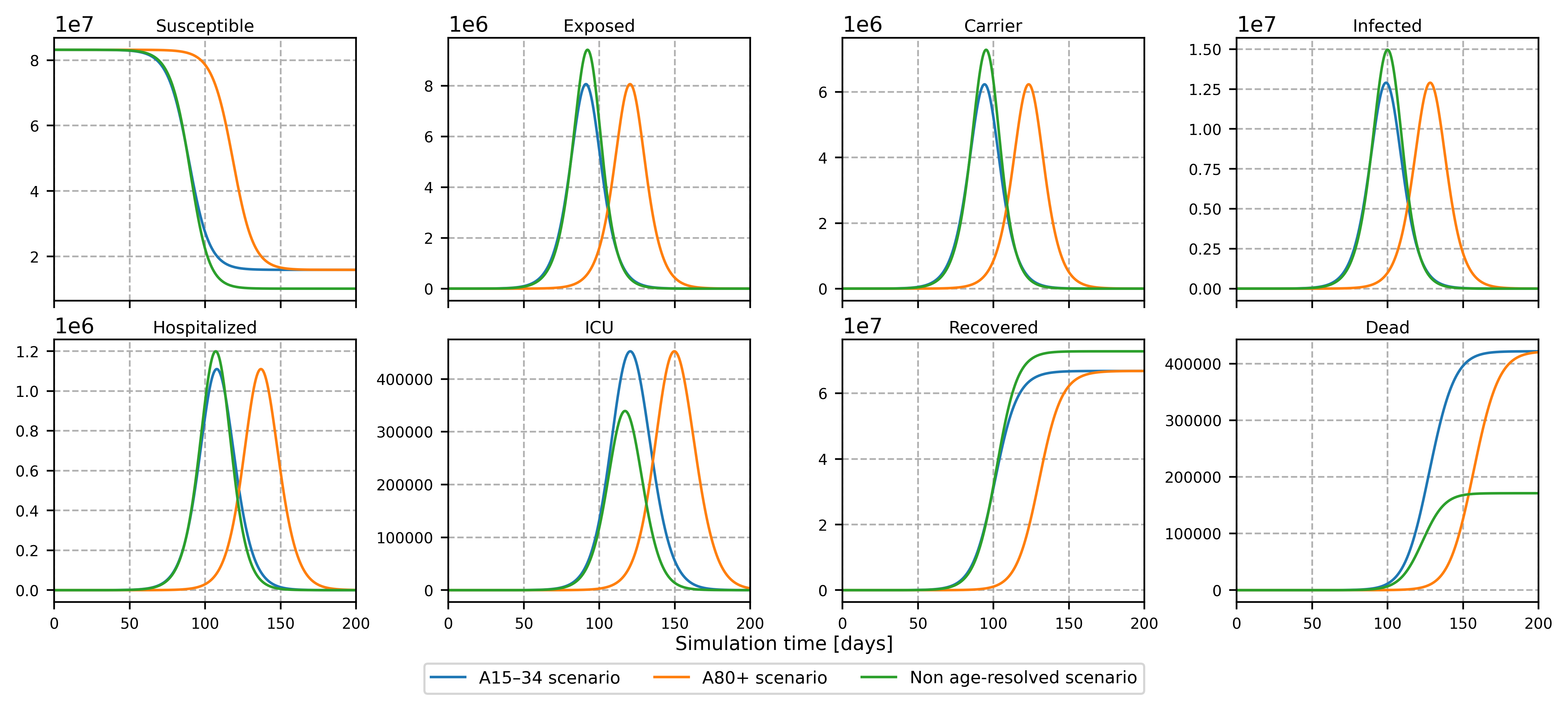}
\caption{\LP{\textbf{Comparison of different scenarios with and without age resolution to compare epidemic peaks.}} \LP{Comparison of a non-age-resolved model with the results of an age-resolved model for $200$ simulation days to compare epidemic peaks. The $100$ initially Exposed individuals are assumed to be in different age groups. We use $n=10$ subcompartments for each compartment $Z\in\mathcal{A}$. Age-resolved simulation results are aggregated for visualization purposes.}}
    \label{fig:age-long}
\end{figure}
The results \LP{for $40$ simulation days} are depicted in~\cref{fig:age}. \LP{The predictions with the relative small time frame prepare the ground for intervention and mitigation actions.} To compare the simulations with and without age resolution, we cumulate the age-resolved results in the compartments. Although we initially had the same number of individuals in the Exposed compartment in each experiment, the results differ significantly. The elderly population has a markedly low number of daily contacts, as can be seen in~\cref{fig:contacts}, which is why the number of additional infections \LP{in scenario~\ref{item:retirement}} remains low \LP{over the simulated time period}. The contact rate for the age group $15$--$34$ years is above average. Although the majority of their contacts occur with individuals belonging to age groups with below-average transmission probabilities $\rho_i$, cf.~\cref{fig:contacts} and~\cref{tab:COVID-19parameters}, the observed spread for scenario~\ref{item:uni} is faster than that predicted in the scenario without age groups. The likelihood of being hospitalized or dying from the disease is significantly higher for older people, see also~\cref{tab:COVID-19parameters}. Accordingly, we get nontrivial numbers of deaths and hospitalizations for scenario~\ref{item:retirement}, despite the relatively low infection dynamics.

\LP{\cref{fig:age-long} provides the results of simulations over a longer period of $200$ days. The figure shows that the size and the timing of epidemic peaks are affected by the use of age groups. The peaks of scenario~\ref{item:retirement} are reached later and the peak sizes and the final size vary for the non-age-resolved scenario~\ref{item:noage} compared to the age-resolved scenarios. The peak sizes of both age-resolved scenarios are broadly similar; the peak size of the daily new transmissions of scenario~\ref{item:uni} is approximately $2437090$ compared to $2437071$ for scenario~\ref{item:retirement}.} \MK{However, as noted before, these peaks can be considered counterfactual and unrealistic to be reached since suitable interventions are most likely to be implemented on time. Overall, from this section, we conclude that the integration of age groups and demographic information result in a notable enhancement} of the model's realism, allowing for more \MK{appropriate intervention schemes}.

\subsection{Simulation of COVID-19 in Germany}\label{sec:covid}
In this section, we examine \LP{the applicability of} age-resolved LCT models in a realistic context in comparison with an age-resolved ODE model. \LP{We demonstrate how reported data can be used to initialize the models and apply the reported data to compare our simulation results to them.} For this, we consider the spread of COVID-19 in Germany in October 2020. We again use the epidemiological model parameters defined in~\cref{sec:parameters_data} and apply the contact matrix for Germany described in~\cref{sec:ageresolution} or~\cref{fig:contacts}.

We define initial values for our models based on the aforementioned daily reported and age-resolved data by the RKI in Germany~\cite{RKI_data_2024}. In order to set the initial population of the LCT models, we extend an initialization scheme for an ODE model proposed in~\cite[Appendix S1\LP{~Text}]{zunker_novel_2024} using the LCT (but neglecting different protection levels for susceptible individuals). In the following description, we fix one age group and omit the corresponding age index. However, the scheme is applied to each age group in the implementation using the age-resolved data.

The RKI provides daily data~\cite{RKI_data_2024} regarding the cumulative confirmed cases, which we denote by $\Sigma_{I,\text{RKI}}(t)$. For simplicity, we assume that the reported cases reflect the number of mildly symptomatic individuals. We scale the number of cumulative confirmed cases by a factor $1/d$ to consider a detection ratio $d\in(0,1]$. Hence, the terms $\Sigma_{I,\text{RKI}}(t)$ need to be scaled accordingly by $1/d$. In the following description, we assume $d=1$. For the initialization, we assume, that each individual stays exactly the mean stay time $T_Z/n_Z$ in each subcompartment $Z_j$ for $Z\in\mathcal{A}$; c.f.~\cref{remark:epi_meaning}. We begin by defining the initial values for the disease state $I$ and then proceed to apply this method to the remaining compartments. By the mean stay time assumption for the initialization, individuals that are in subcompartment $I_j$ at time $t_0$ are those who developed mild symptoms between the time points $t_0 - (j-1) \, T_I/n_I$ and $t_0 - j \, T_I/n_I$. Consequently, the number of individuals in subcompartment $I_j$ at time $t_0$ is 
\begin{align}\label{eq:initI}
    I_j(t_0) = \Sigma_{I,\text{RKI}}\left(t_0 - (j-1)\,\frac{T_I}{n_I}\right) - \Sigma_{I,\text{RKI}}\left(t_0 - j\,\frac{T_I}{n_I}\right).
\end{align}
Note that the number of cumulative confirmed cases is reported once per day, but the times at which we evaluate $\Sigma_{I,\text{RKI}}$ does not necessarily correspond to these time points. If the calculation requires data between two consecutive days, the reported RKI data is interpolated linearly.

For the remaining compartments $Z\in\mathcal{A}$, the consideration can be applied analogously. Considering the transition probabilities, we obtain for the respective subcompartments the equations
\begin{align}\label{eq:initECHU}
\begin{aligned}
    E_j(t_0) &= \frac{1}{\mu_C^I} \left(\Sigma_{I,\text{RKI}}\left(t_0 + T_C + (n_E - j + 1) \,\frac{T_E}{n_E}\right) - \Sigma_{I,\text{RKI}}\left(t_0 + T_C + (n_E - j)\,\frac{T_E}{n_E}\right)\right)\\
    C_j(t_0) &= \frac{1}{\mu_C^I} \left(\Sigma_{I,\text{RKI}}\left(t_0 + (n_C - j + 1)\,\frac{T_C}{n_C}\right) - \Sigma_{I,\text{RKI}}\left(t_0 + (n_C - j) \,\frac{T_C}{n_C}\right)\right)\\
    H_j(t_0) &= \mu_I^H \left( \Sigma_{I,\text{RKI}}\left(t_0 - T_I - (j-1)\,\frac{T_H}{n_H}\right) - \Sigma_{I,\text{RKI}}\left(t_0 - T_I - j\,\frac{T_H}{n_H}\right)\right)\\
    U_j(t_0) &= \mu_H^U \mu_I^H \left( \Sigma_{I,\text{RKI}}\left(t_0 - T_I - T_H - (j-1)\,\frac{T_U}{n_U}\right) - \Sigma_{I,\text{RKI}}\left(t_0 - T_I - T_H - j\,\frac{T_U}{n_U}\right)\right).
\end{aligned} 
\end{align} 

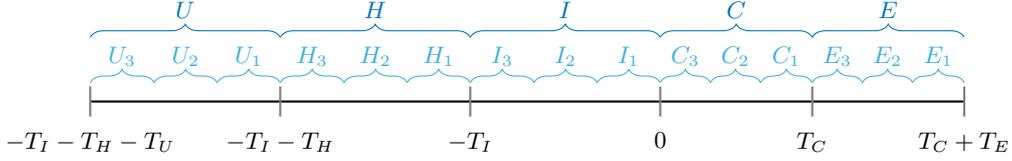
\begin{figure}[tb]
	\centering
	\begin{tikzpicture}[align=center]
    \small
		\draw[ thick] (-7.5,0) -- (4,0);
  
        \node[below] at (0,-0.3){$0$};
		\draw[gray, thick] (0,0.2) -- (0,-0.2);
  
        \draw [CornflowerBlue,decorate,  decoration = {brace, amplitude=5pt}] (0,0.25) --  (0.66,0.25) node[pos=0.5,above=3pt,CornflowerBlue]{\small $C_3$};
        \draw [CornflowerBlue,decorate,  decoration = {brace, amplitude=5pt}] (0.66,0.25) --  (1.32,0.25) node[pos=0.5,above=3pt,CornflowerBlue]{$C_2$};
        \draw [CornflowerBlue,decorate,  decoration = {brace, amplitude=5pt}] (1.32,0.25) --  (2,0.25) node[pos=0.5,above=3pt,CornflowerBlue]{$C_1$};

        \draw [RoyalBlue, decorate,  decoration = {brace, amplitude=5pt}] (0,0.85) --  (2,0.85) node[pos=0.5,above=4pt, RoyalBlue]{$C$};
        
        \node[below] at (2,-0.3){$T_C$};
		\draw[gray, thick] (2,0.2) -- (2,-0.2);
  
        \draw [CornflowerBlue,decorate,  decoration = {brace, amplitude=5pt}] (2,0.25) --  (2.66,0.25) node[pos=0.5,above=3pt,CornflowerBlue]{\small $E_3$};
        \draw [CornflowerBlue,decorate,  decoration = {brace, amplitude=5pt}] (2.66,0.25) --  (3.32,0.25) node[pos=0.5,above=3pt,CornflowerBlue]{$E_2$};
        \draw [CornflowerBlue,decorate,  decoration = {brace, amplitude=5pt}] (3.32,0.25) --  (4,0.25) node[pos=0.5,above=3pt,CornflowerBlue]{$E_1$};

        \draw [RoyalBlue, decorate,  decoration = {brace, amplitude=5pt}] (2,0.85) --  (4,0.85) node[pos=0.5,above=4pt, RoyalBlue]{$E$};
        
        \node[below] at (4,-0.3){$T_C+T_E$};
		\draw[gray, thick] (4,0.2) -- (4,-0.2);

        \draw [CornflowerBlue,decorate,  decoration = {brace, amplitude=5pt}] (-0.83,0.25) --  (0,0.25) node[pos=0.5,above=3pt,CornflowerBlue]{\small $I_1$};
        \draw [CornflowerBlue,decorate,  decoration = {brace, amplitude=5pt}] (-1.66,0.25) --  (-0.83,0.25) node[pos=0.5,above=3pt,CornflowerBlue]{$I_2$};
        \draw [CornflowerBlue,decorate,  decoration = {brace, amplitude=5pt}] (-2.5,0.25) --  (-1.66,0.25) node[pos=0.5,above=3pt,CornflowerBlue]{$I_3$};

        \draw [RoyalBlue, decorate,  decoration = {brace, amplitude=5pt}] (-2.5,0.85) --  (0,0.85) node[pos=0.5,above=4pt, RoyalBlue]{$I$};

        \node[below] at (-2.5,-0.3){$-T_I$};
		\draw[gray, thick] (-2.5,0.2) -- (-2.5,-0.2);
  
       \draw [CornflowerBlue,decorate,  decoration = {brace, amplitude=5pt}] (-3.33,0.25) --  (-2.5,0.25) node[pos=0.5,above=3pt,CornflowerBlue]{\small $H_1$};
        \draw [CornflowerBlue,decorate,  decoration = {brace, amplitude=5pt}] (-4.16,0.25) --  (-3.33,0.25) node[pos=0.5,above=3pt,CornflowerBlue]{$H_2$};
        \draw [CornflowerBlue,decorate,  decoration = {brace, amplitude=5pt}] (-5,0.25) --  (-4.16,0.25) node[pos=0.5,above=3pt,CornflowerBlue]{$H_3$};

        \draw [RoyalBlue, decorate,  decoration = {brace, amplitude=5pt}] (-5,0.85) --  (-2.5,0.85) node[pos=0.5,above=4pt, RoyalBlue]{$H$};

        \node[below] at (-5,-0.3){$-T_I-T_H$};
		\draw[gray, thick] (-5,0.2) -- (-5,-0.2);

        \draw [CornflowerBlue,decorate,  decoration = {brace, amplitude=5pt}] (-5.83,0.25) --  (-5,0.25) node[pos=0.5,above=3pt,CornflowerBlue]{\small $U_1$};
        \draw [CornflowerBlue,decorate,  decoration = {brace, amplitude=5pt}] (-6.66,0.25) --  (-5.83,0.25) node[pos=0.5,above=3pt,CornflowerBlue]{$U_2$};
        \draw [CornflowerBlue,decorate,  decoration = {brace, amplitude=5pt}] (-7.5,0.25) --  (-6.66,0.25) node[pos=0.5,above=3pt,CornflowerBlue]{$U_3$};

        \draw [RoyalBlue, decorate,  decoration = {brace, amplitude=5pt}] (-7.5,0.85) --  (-5,0.85) node[pos=0.5,above=4pt, RoyalBlue]{$U$};

        \node[below] at (-7.5,-0.3){$-T_I-T_H-T_U$};
		\draw[gray, thick] (-7.5,0.2) -- (-7.5,-0.2);
  
    \normalsize
	\end{tikzpicture}
\caption{\textbf{Illustration of the initialization method for LCT models.} Visualization of the determination of the initial values for the LCT model for the starting time $t_0=0$. The diagram is created for a model with $n_z=3$ subcompartments for all $z\in\mathcal{A}$. The brackets indicate that the number of cases confirmed in that time interval determines the initial value for the specified compartment or subcompartment. The spaces between the times are set arbitrarily and do not correspond to the parameters used. This representation is inspired by~\cite{koslow_appropriate_2022}.}
\label{fig:initialization}
\end{figure}
\begin{figure}[bt]
\centering
    \begin{minipage}[t]{0.4\textwidth}
    \centering
     \includegraphics[width=\textwidth]{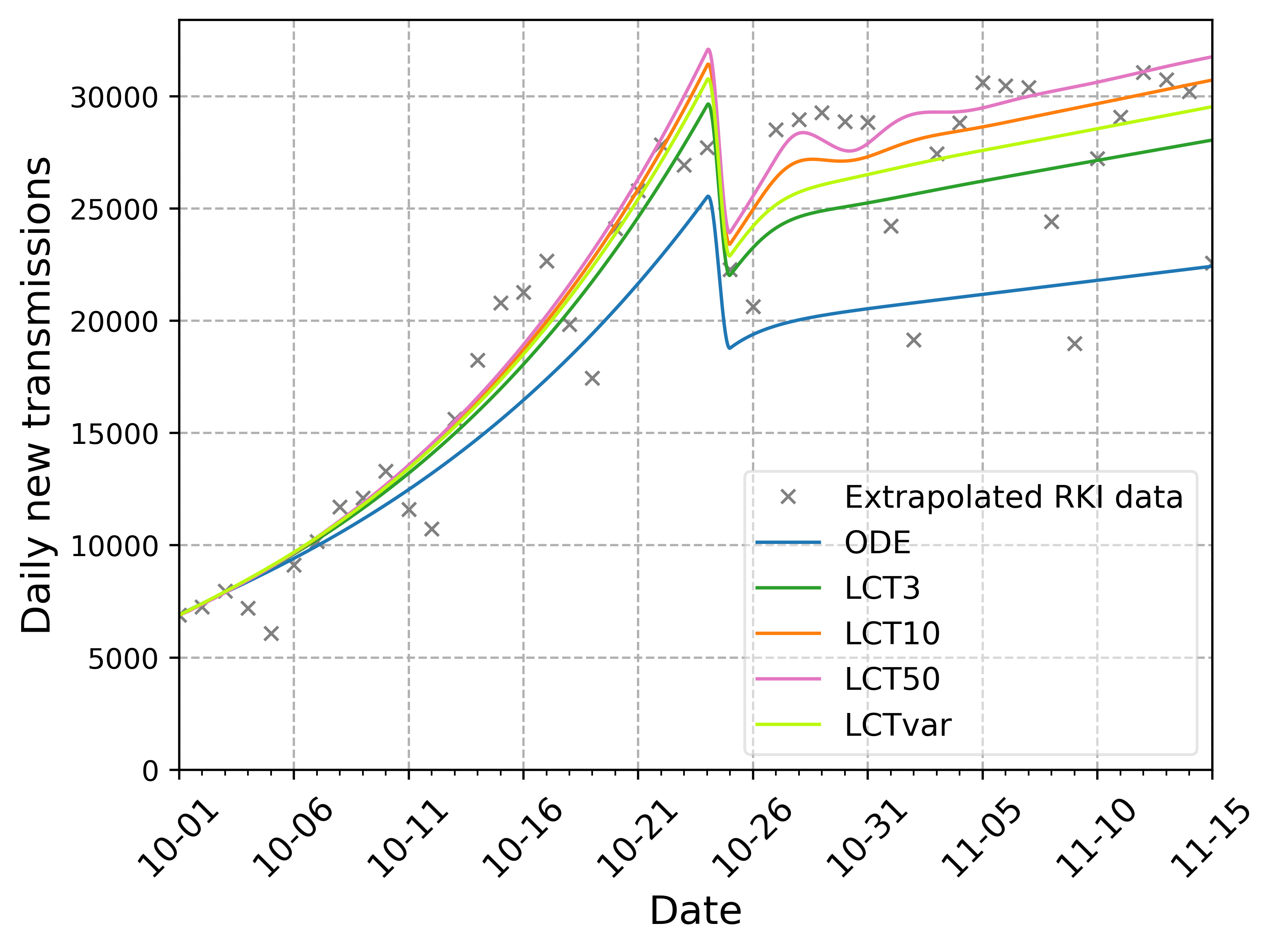}
\end{minipage}
\begin{minipage}[t]{0.4\textwidth}
 \centering
    \includegraphics[width=\textwidth]{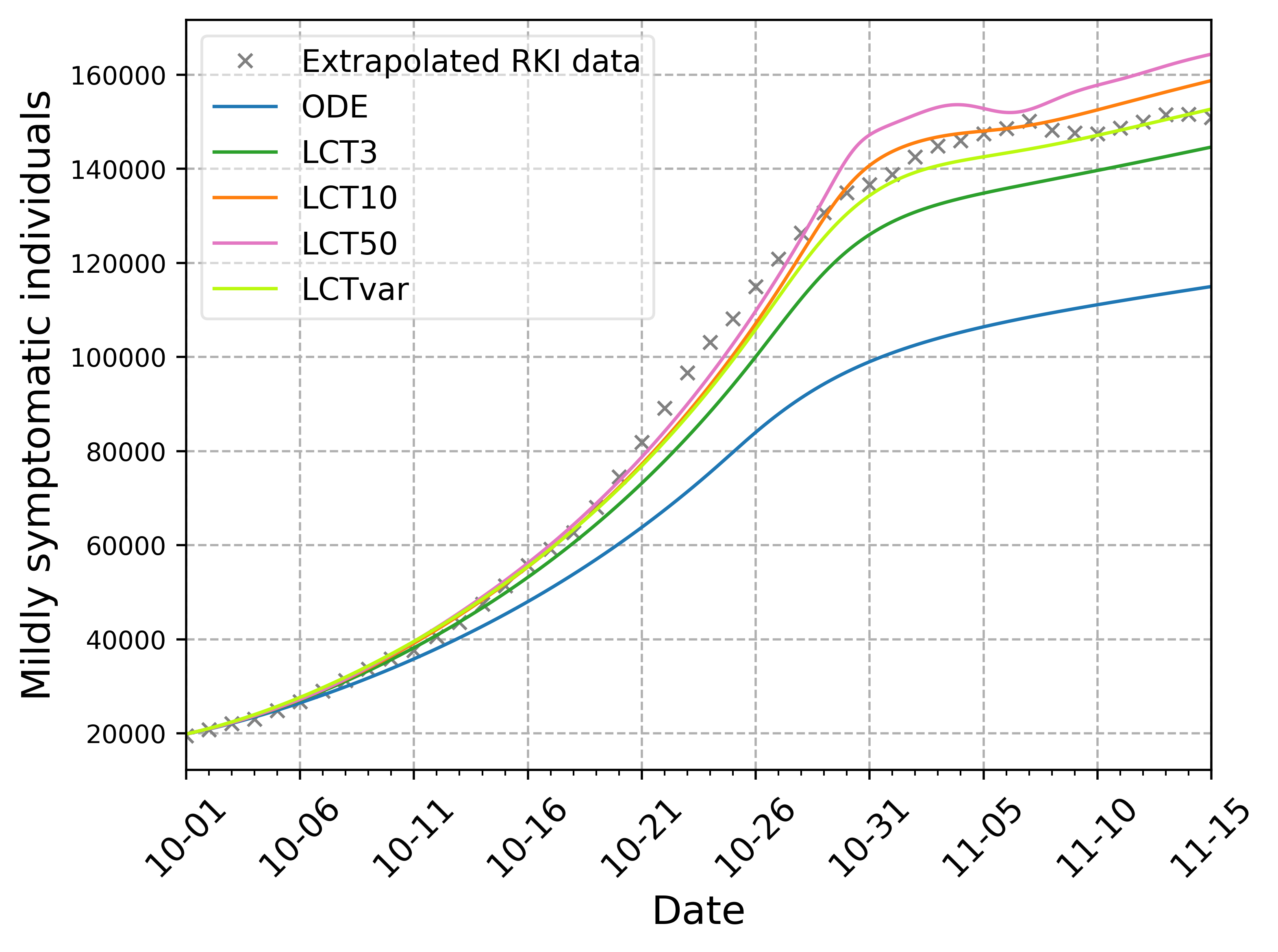}
\end{minipage}  
    \centering
    \begin{minipage}[t]{0.4\textwidth}
    \centering
     \includegraphics[width=\textwidth]{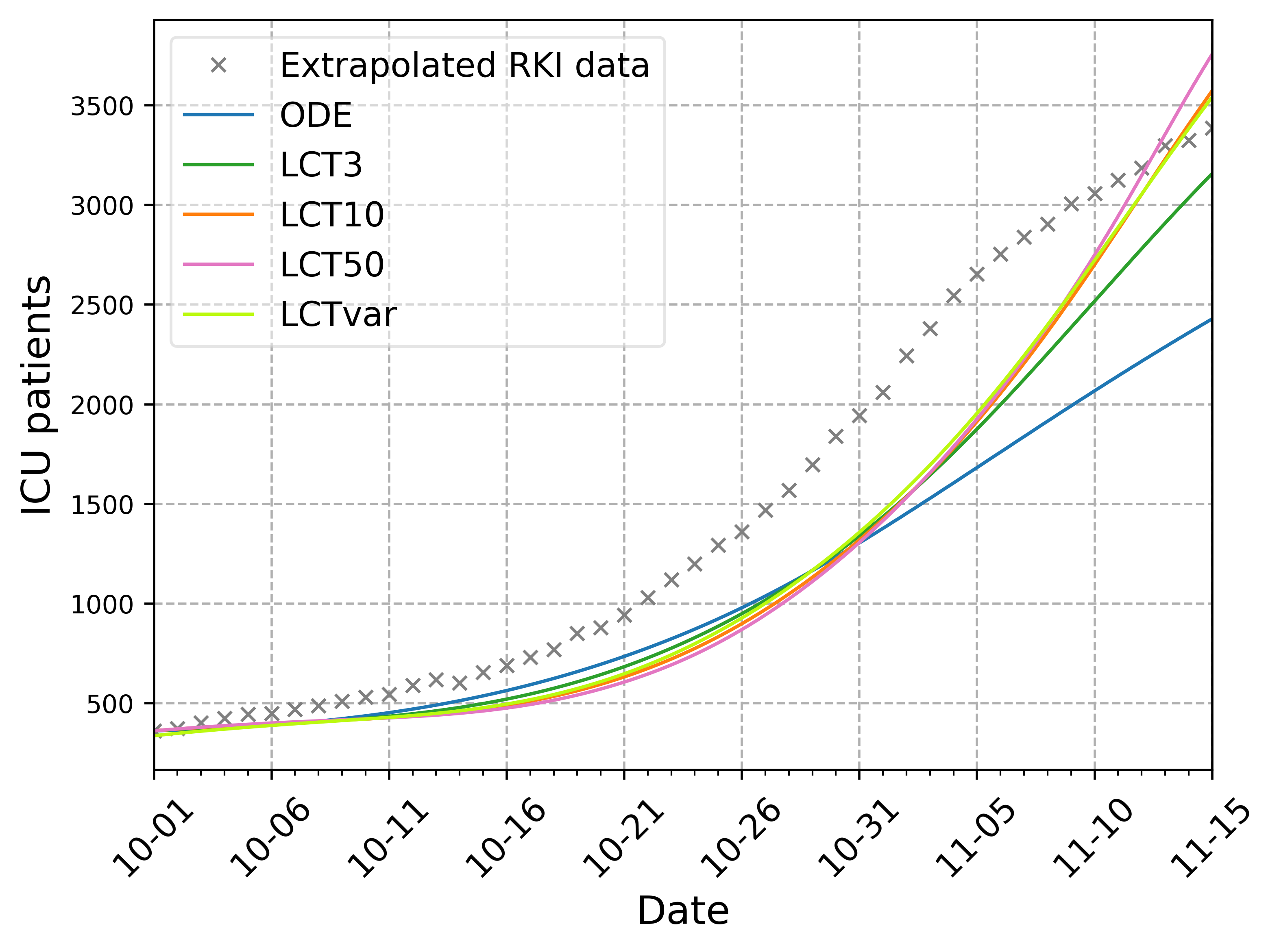}
\end{minipage}
\begin{minipage}[t]{0.4\textwidth}
 \centering
    \includegraphics[width=\textwidth]{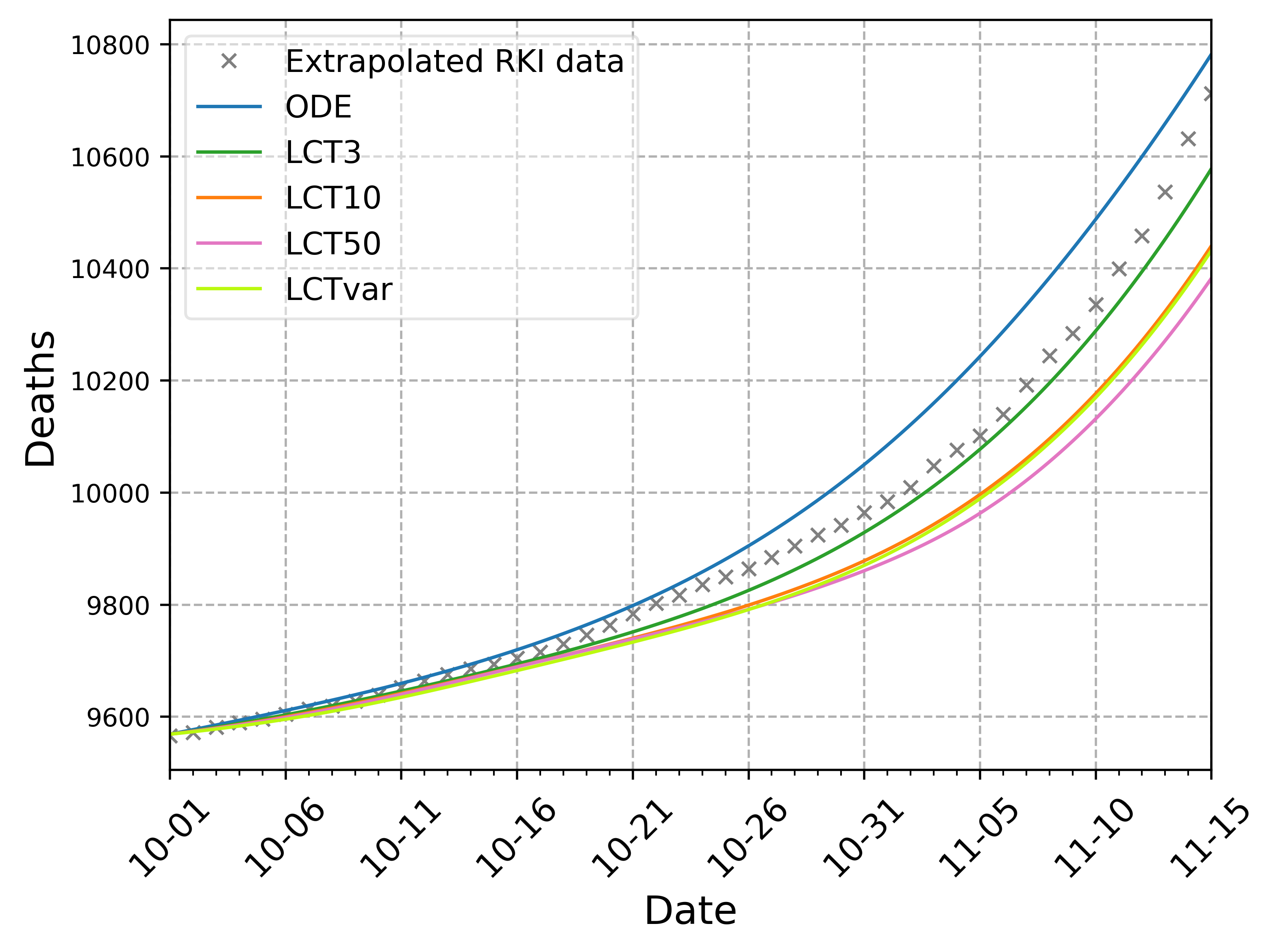}
\end{minipage}  
\caption{\textbf{Results for COVID-19 in Germany with start date Oct 1, 2020.} Comparison of extrapolated RKI data with the simulation results of a simple ODE model and of LCT models with different assumptions regarding the number of subcompartments. We compare the number of daily new transmissions (top left), the number of mildly symptomatic
individuals (top right), the number of patients in intensive care units (bottom left) and the number of deaths (bottom right). 
\MK{Notation as in~\cref{fig:changepoints}.}}
    \label{fig:october_realistic}
\end{figure}
\cref{fig:initialization} provides a schematic illustration of the relevant time intervals of the confirmed case data for the subcompartments of each compartment, exemplified by $n_Z=3$ for all $Z\in\mathcal{A}$. While the number of patients in intensive care units is provided by an additional daily report~\cite{divi2024}, the data set does not include age-specific data. Therefore, we use the reported number of patients~\cite{divi2024} to scale the result of~\eqref{eq:initECHU} for compartment $U$ for each subcompartment and age group, ensuring that the initial total number of ICU patients is consistent to the reported data.

The compartments $S$, $R$ and $D$ are computed analogously to the ODE model in~\cite{koslow_appropriate_2022}. For the sake of completeness, we provide the formulas below. As the reported data for the deaths, $\Sigma_{D,\text{RKI}}(t)$, contain the deaths reporting only the day of the first positive test and the assumed day of infection -- instead of the day of death, we include a time shift. The number of deaths is
\begin{align} \label{eq:initD}
    D(t_0)=\Sigma_{D,\text{RKI}}(t_0-T_I-T_H-T_U).
\end{align} 
The number of recovered individuals equals the total confirmed cases less deaths and the currently infected people, i.e. 
\begin{align*}
    R(t_0)= \Sigma_{I,\text{RKI}}(t_0)-I(t_0)-H(t_0)-U(t_0)-D(t_0).
\end{align*} Lastly, the set of susceptible individuals is set using the other compartment sizes, 
\begin{align*}
    S(t_0)=N-\sum_{Z\in\mathcal{Z}\setminus S}Z(t_0).
\end{align*}

We compare our simulation results to extrapolated RKI data. To extrapolate the given data, we apply the method defined for the initial values for each simulation day (\LP{without} division in subcompartments). The same scaling on basis of the detection ratio for the reported data as for the initialization is \LP{used}. We adapt equation~\eqref{eq:initI} for each simulation day~$t$ to extrapolated data for mildly symptomatic individuals and time shift the number of deaths according to~\eqref{eq:initD}. A non-age-resolved number of ICU patients is set directly using~\cite{divi2024}. Furthermore, we again look at the number of daily new transmissions $\sigma_{E,\text{RKI}}$, which is the number of people transiting to compartment $E$ within one day. Therefore, according to~\eqref{eq:initECHU}, we use for simulation time~$t$ the formula
\begin{align*}
   \sigma_{E,\text{RKI}}(t) = \frac{1}{\mu_C^I} \left(\Sigma_{I,\text{RKI}}(t + T_C +T_E) - \Sigma_{I,\text{RKI}}(t + T_C + T_E-1)\right).
\end{align*}

We start our simulation on Oct. 1, 2020, and simulate for 45 days. In order to simulate the impact of NPIs, we manually adapt the contact rate during the simulation if the data indicate a different trend. We, however, keep the implemented change points minimal and only allow one change point in the simulation period as new NPIs were neither decreed on a daily nor weekly basis. Firstly, the daily contacts are scaled such that, in the beginning of the simulation, the simulation results for the number of daily new transmissions align with the extrapolated RKI data when all age groups are aggregated. Furthermore, on Oct. 25, 2020 the data indicate a trend change, such that we reduce the contacts by $30\%$, which represents the implementation of a NPI. In addition, we assume a detection ratio of $d = 1/1.2$ over the whole period.

The simulation results are depicted in~\cref{fig:october_realistic}. We \LP{accumulate} the age-resolved simulation results. \LP{We see that all models roughly depict the trends of the infection dynamics. Suitable parameter fitting for the many degrees of freedom and additional numbers of change points can realistically match the simulation results from each of the models to the curves. Given the prior findings that Erlang distributed stay times are more realistic than those of exponential distributions, it can be deduced that the the LCT models are better suited for matching the observed cases than the simple ODE model.}

\subsection{Run time analysis}\label{sec:runtime}
\begin{figure}[b]
    \centering
    \begin{minipage}[t]{0.315\textwidth}
    \centering
     \includegraphics[width=\textwidth]{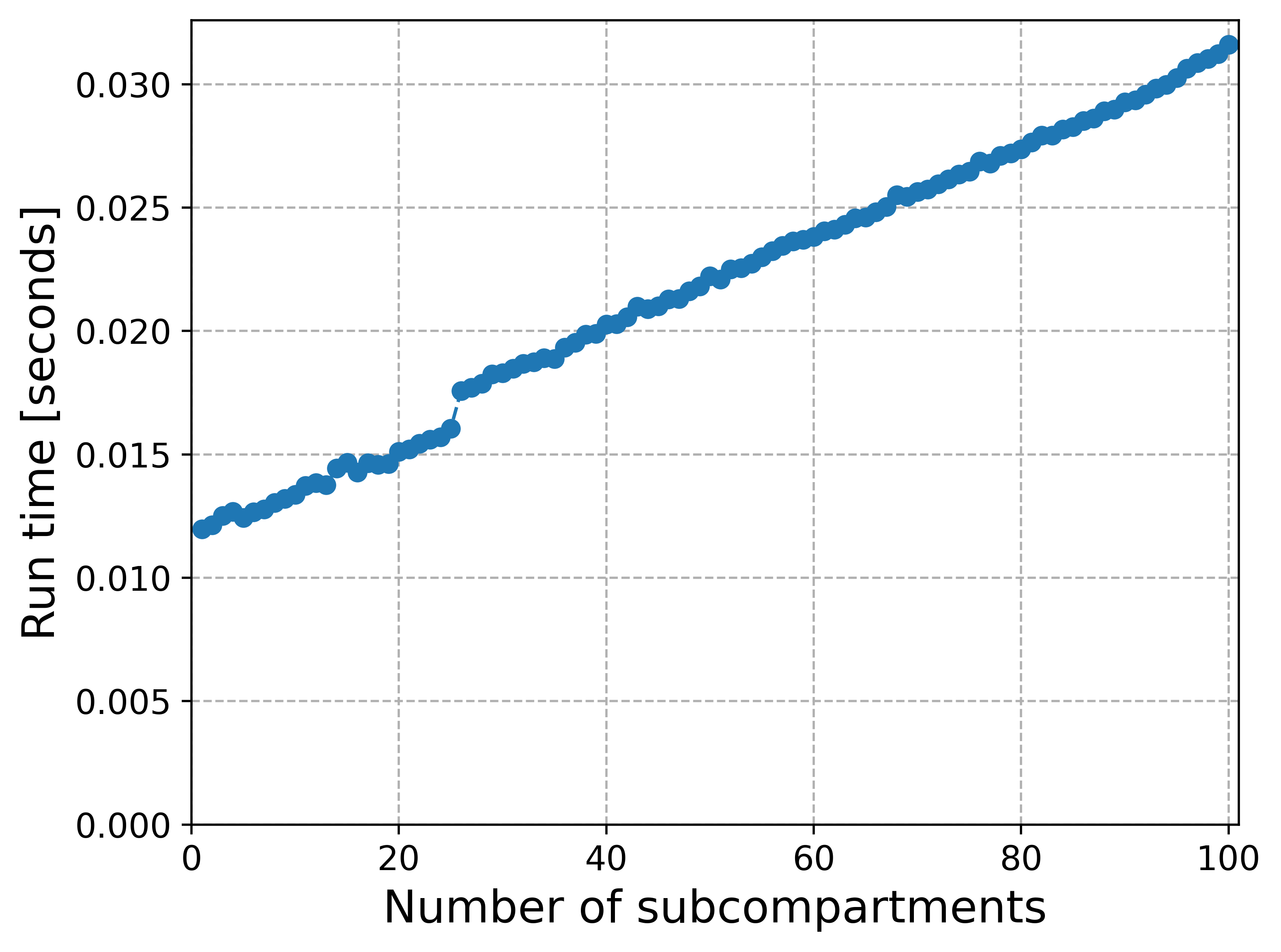}
\end{minipage}
\begin{minipage}[t]{0.315\textwidth}
 \centering
    \includegraphics[width=\textwidth]{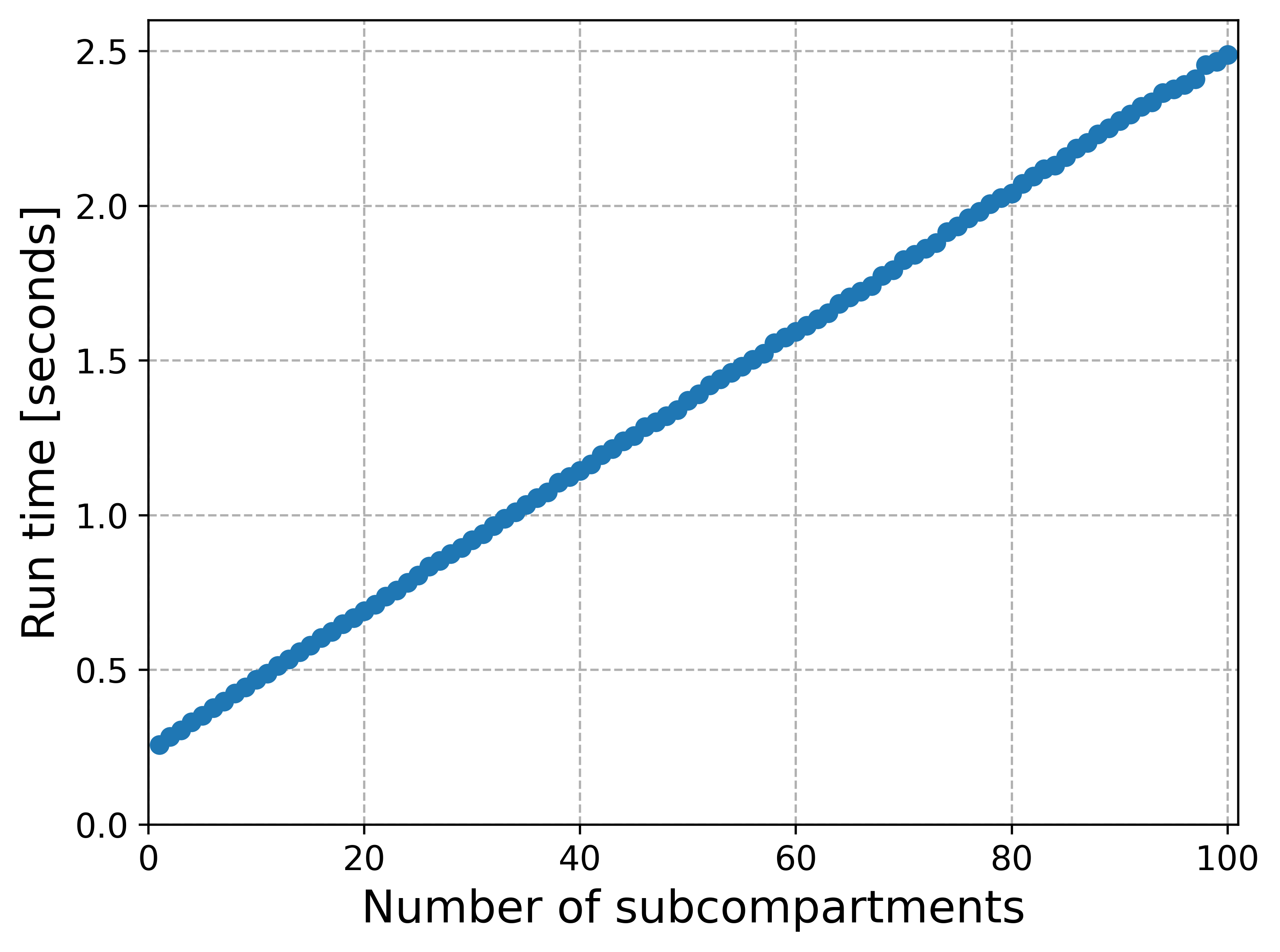}
\end{minipage}  
\begin{minipage}[t]{0.34\textwidth}
 \centering
    \includegraphics[width=\textwidth]{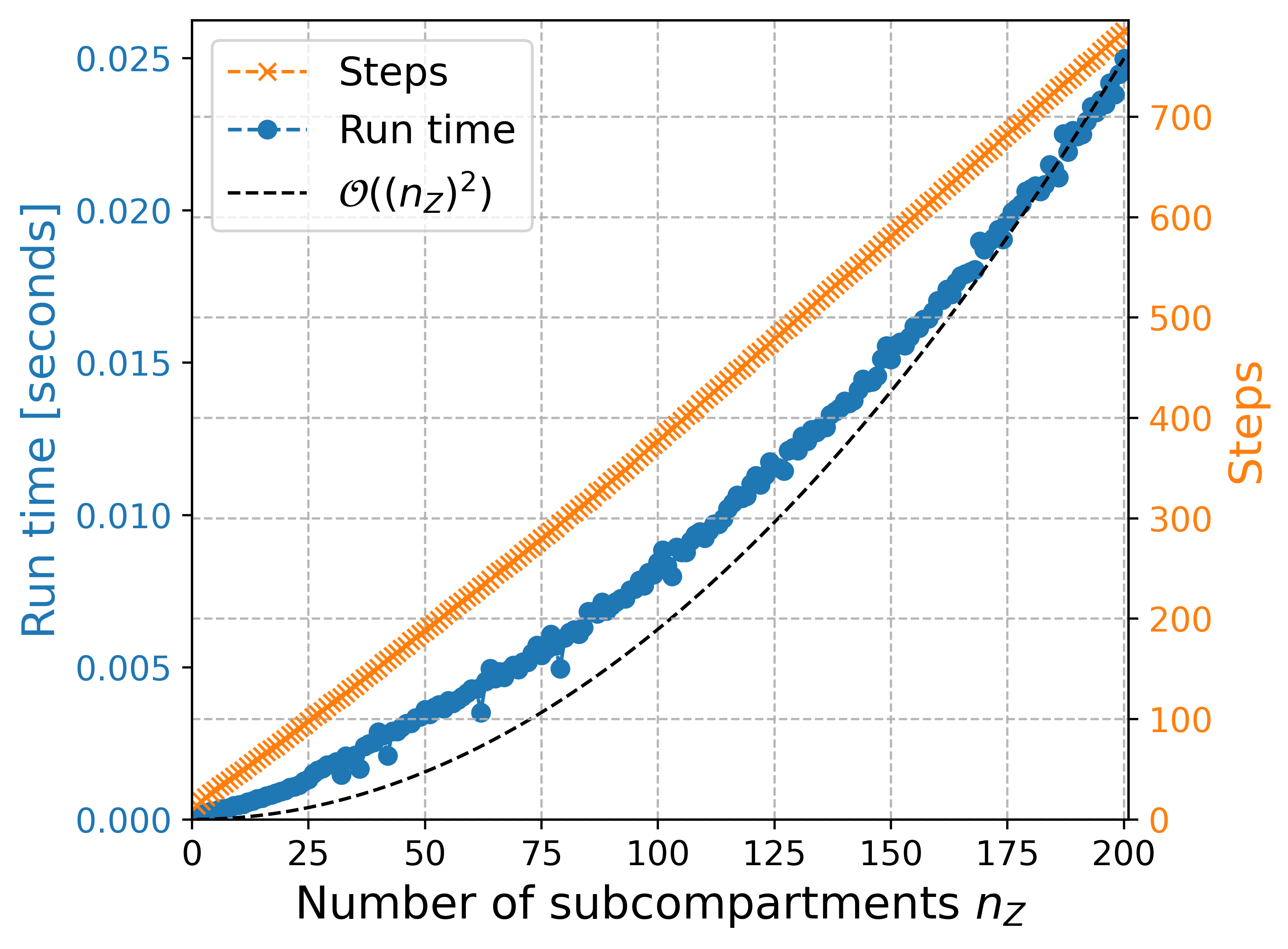}
\end{minipage}  
\caption{\textbf{Run time for the LCT model with different numbers of subcompartments.} Run time for a solver with a fixed step size of $10^{-2}$ with optimization flag \texttt{-O3} (left), with optimization flag \texttt{-O0} (center), and run time along with the required time steps of an adaptive solver (right) to compute a numerical solution of the LCT model~\eqref{eq:LCTSECIR}. The numbers of subcompartments are ranging from $1$ to $100$ and for the adaptive case to $200$. For each subcompartment number, the average time of $100$ runs is shown. Simulations are executed without age resolution and for $20$ simulation days.
}
\label{fig:runtime}
\end{figure}

\begin{figure}[bt]
    \centering
    \begin{minipage}[t]{0.48\textwidth}
    \centering
     \includegraphics[width=\textwidth]{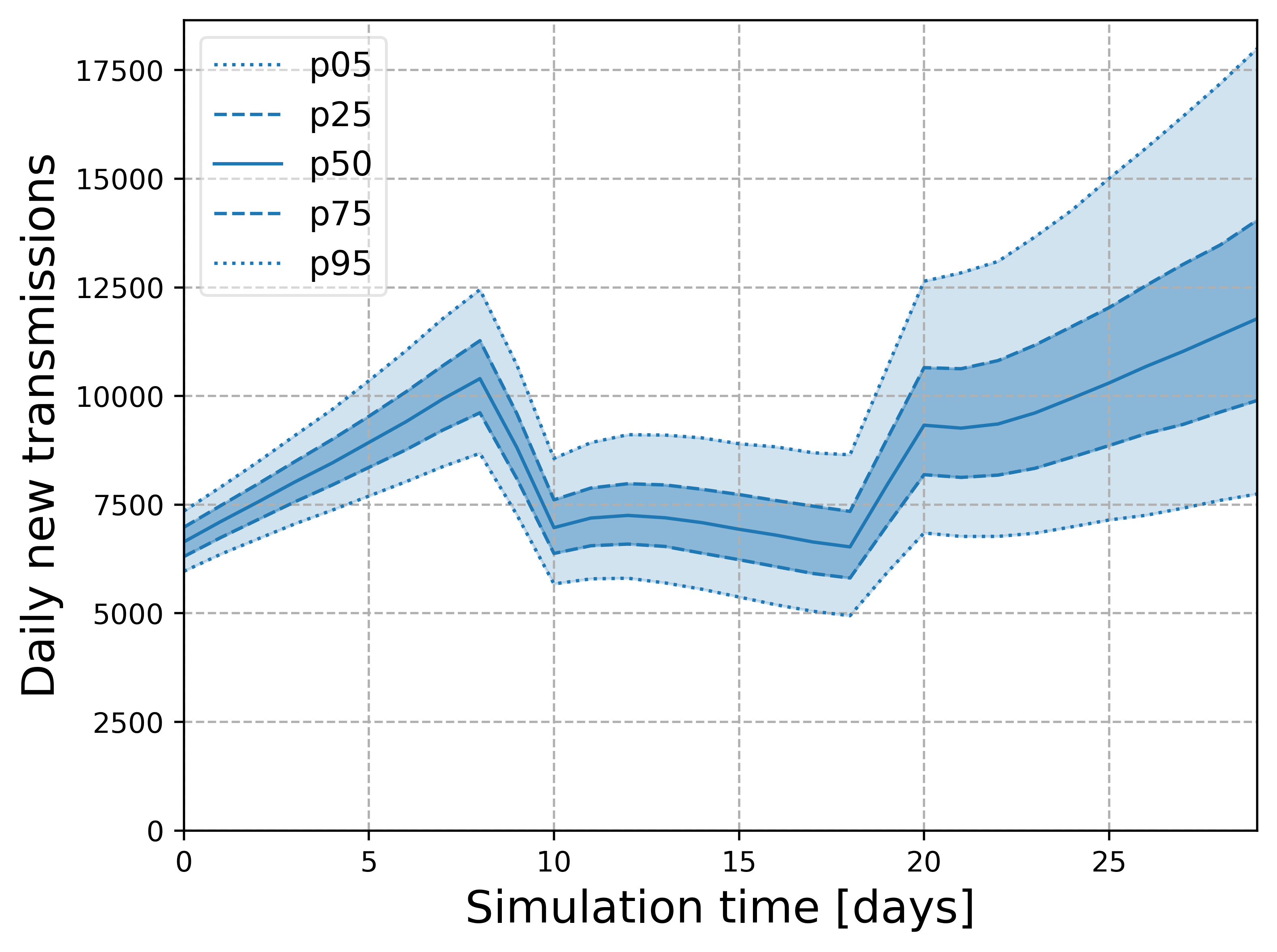}
\end{minipage}
\begin{minipage}[t]{0.48\textwidth}
 \centering
    \includegraphics[width=\textwidth]{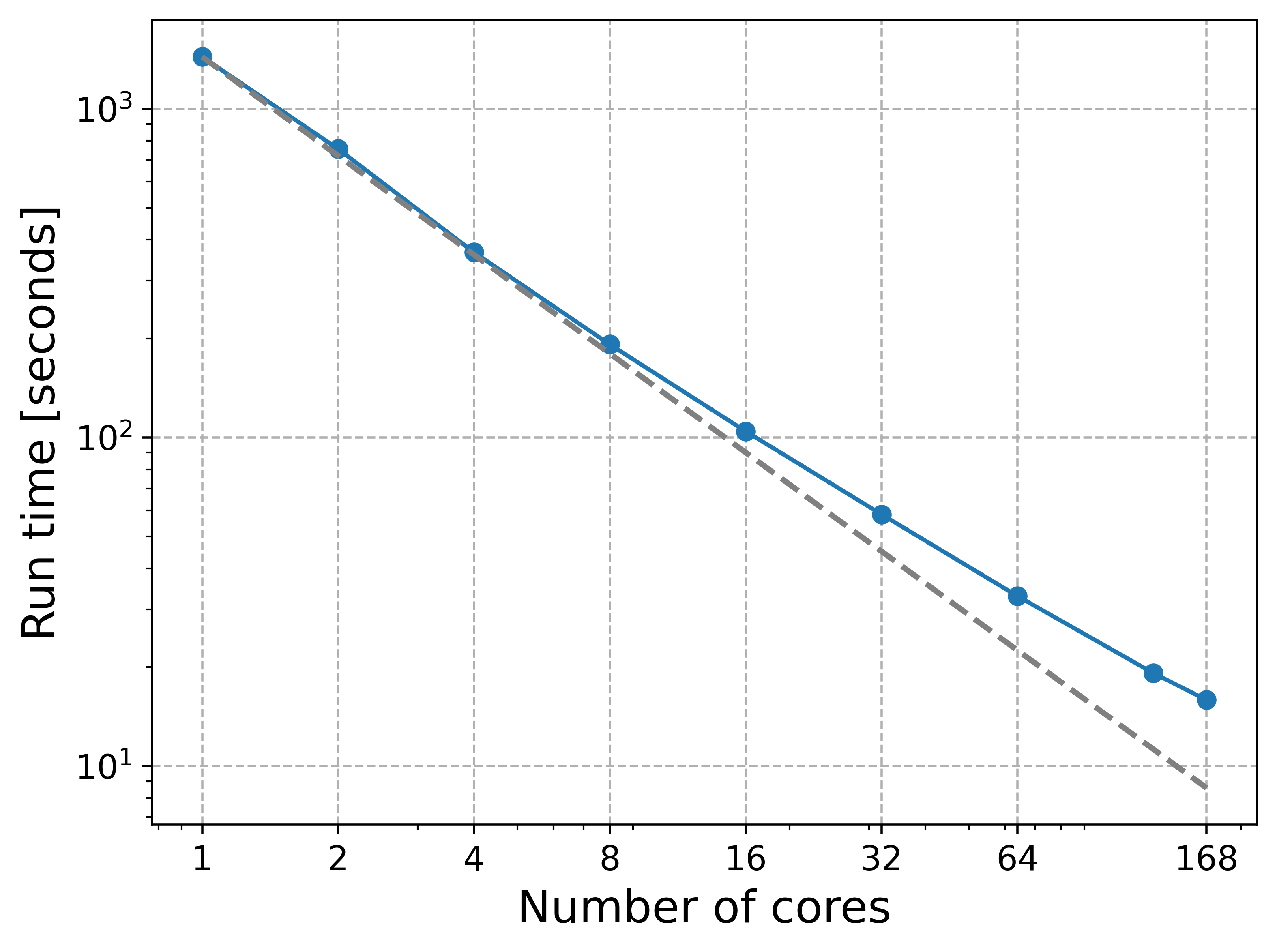}
\end{minipage}  
\caption{\MK{\textbf{Visualization of percentiles from ensemble run simulation and strong scaling behavior.} Visualization of percentiles p5, p25, p50, p75, p95 for an ensemble run simulation of 30 days, $16\,384$ simulations and two contact change points (left) and strong scaling behavior from 1 to 168 cores (right).}
}
\label{fig:scaling}
\end{figure}

\MK{In this section, we eventually study the performance of the implemented methods in MEmilio. We} present the run time behavior of our implementation of the LCT model~\eqref{eq:LCTSECIR} \MK{and consider the parallel speedup through MPI-parallelized ensemble runs}. 

\MK{We start by considering the sequential runtime first. Our objective was to implement an efficient and flexible LCT model class from which we can create LCT models with arbitrary numbers of subcompartments and whose simulation's} run time increases at most linearly in the number of subcompartments employed. The authors of~\cite{hurtado_building_2021} indicate that researchers typically hard-code the numbers of subcompartments and write multiple ODE functions to consider different subcompartment numbers. However, our model allows the flexibility to set the number of subcompartments for each compartment $Z\in\mathcal{A}$ and for each age group independently. Subcompartment realizations will be created upon compile-time\MK{, substantially reducing potential run-time overhead.}

For the run time analysis, we use one age group and set the parameters to the previously described values, including the single value contact rate described in~\cref{sec:ageresolution}. In this analysis, we once more set the number of subcompartments, $n_Z$, to an equal value for all $Z\in\mathcal{A}$. The initial values are set to reasonable values according to reported data by RKI. All run time measurements are conducted on an Intel Xeon "Skylake" Gold $6132$, $2.60$ GHz with $14$ cores per socket \MK{and four sockets}. 

\cref{fig:runtime} (left) depicts the run time taken to compute a numerical solution for the LCT model~\eqref{eq:LCTSECIR} under various assumptions regarding the number of subcompartments ($n_Z=1,\dots,100$). For each number of subcompartments, we compute the average time needed over $100$ runs. The simulations are performed for $20$ days each and a Runge-Kutta scheme of fifth order with a fixed step size of $10^{-2}$ is used as mentioned in~\cref{sec:parameters_data}. The run time increases mostly linearly with the numbers of subcompartments used. With the \texttt{-O3} optimization flag, we observe a jump in the run time from a number of $25$ to $26$ subcompartments. This occurs since the compiler includes additional optimizations for \textit{small} vector sizes. By setting the optimization flag \texttt{-O0}, a strictly linear curve without jumps is obtained, see~\cref{fig:runtime}~(center).

Additionally, we benchmark a simulation using an adaptive Runge-Kutta Cash Karp 5(4) solver~\cite{cash_variable_1990} without any limitations on the step size, while otherwise maintaining consistent model assumptions. The results regarding the run time and the number of time steps utilized are illustrated in~\cref{fig:runtime}~(right). We observe that the number of time steps increases linearly with the number of subcompartments. The investigation conducted without adaptive step sizing (i.e., with a constant number of steps) indicates a linear increase in run time. Therefore, we conclude that the run time per time step increases linearly with the number of subcompartments $n_Z$. In conjunction with the linear growth in the number of steps, it is a logical hypothesis that the run time for the adaptive procedure increases quadratically in the number of subcompartments. As illustrated in~\cref{fig:runtime}~(right), the curve for the run time indeed has a shape that is consistent with that of a quadratic function.

\MK{From~\cref{fig:runtime} (left), we see that 2000 equally-sized time steps corresponding to 20 simulation days with step size $10^{-2}$ of a model with more than 500 subcompartments (in total, 100 for compartments $E$, $C$, $I$, $H$, and $U$, 1 for $S$, $R$, and $D$) can be computed in approximately $0.03$ seconds. Using a more realistic, adaptive step size solver, we can even reduce this time by a factor of 3 to 4 and to approximately $0.008$ seconds. This means that, even in a sequential context, more than 7500 simulations can be run in less than one minute on a rather old Xeon Skylake processor, making the model thus suitable to be run and fitted on any consumer laptop. Furthermore, for smaller numbers of subcompartments, as often used in the literature, the runtime is several magnitudes smaller and almost 5000 executions of the LCT3 model can be run within one second on the mentioned processor.}

\MK{Eventually, we demonstrate the potential speedup through distributed memory parallelism that we provide with the novel MEmilio model. Therefore, we use the parameters from the prior section and introduce two contact change points throughout a simulation period of 30 days where we vary the initial parameters in a range of $\pm10~\%$. As this section is purely compute performance oriented, we refrain from providing more details on the epidemiological setting here; the simulation, however, is open source and can be found with our submission. We use a taylored random number generator (RNG) to ensure the correct generation of random numbers on all parallel ranks ensuring no dependencies of the individual simulations through the RNG. We provide percentiles of an ensemble run with $16\,384$ simulations in~\cref{fig:scaling} (left) and the strong scaling behavior in~\cref{fig:scaling} (right). While in the ensemble run settings, the particular simulations can be run in parallel, a nonnegligible overhead comes from the collection and computation of the percentiles and statistics at the end of all simulations. From~\cref{fig:scaling} (right), we see that we obtain very good speedup with three nodes of the Intel Xeon "Skylake" Gold $6132$, providing, in total, 168 compute cores. We speed up the process by a factor of approximately $90.7$.}

\section{Discussion}
ODE-based models are a popular approach for modeling the spread of infectious diseases. However, simple ODE-based models implicitly assume that the stay time in each compartment is exponentially distributed, which is unrealistic from an epidemiological point of view. We used the LCT to set up a \MK{SECIR-type} model that allows for Erlang distributed stay times in the compartments and thus generalizes simple ODE models. This allows for simulations with more realistic model behavior without the need to formulate complex models based on integro-differential equations. The resulting system is still formulated as an ODE system, which allows the use of already existing and efficient ODE solvers. 

To choose an appropriate number of subcompartments, we only need the mean and variance of the stay time distribution. This can be advantageous, especially in the beginning of the spread of a disease, when the realistic stay time distributions might not be known yet. 
Even if the variance is unknown, using a number of subcompartments higher than one already leads to a better prediction~\cite{krylova_effects_2013}. While we can include more realistic model assumptions by using an LCT model instead of a simple ODE model and do not require exact knowledge of the stay time distribution as in the case of IDE models, it should be encouraged to also report variances from population cohort studies. 

\MK{One goal of this study was to investigate the complex dependence and relationship between epidemic timings and peaks and the chosen number of subcompartments in an LCT model and to show that a general statement might be impossible to obtain. Here, we considered an advanced but fixed SECIR-type model and our findings might not directly apply to other models or model types. However, the key finding to thoroughly study these relationships with particular models and parameters before expectations on epidemic peaks and timings can be formulated, should be taken into account when developing LCT models.}

To further relax assumptions on the stay time distribution while maintaining the use of ODEs, the Generalized Linear Chain Trick as presented in~\cite{hurtado_generalizations_2019} can be applied. With this, it is possible to include phase-type distributions, which lie dense in the set of positive-valued distributions and thus allows for an even more flexible choice of stay time distributions.

The assumptions used in the COVID-19 inspired scenario could be enhanced. Firstly, the contact matrix was scaled globally, i.e., all age groups were scaled in the same way, at the beginning of the simulation as well as during the simulation when modeling the implementation of a NPI.
The contacts for the age groups could be scaled independently so that the simulation results match the extrapolated \LP{reported} data in each age group. Moreover, one could think of introducing age specific detection ratios to include different testing strategies, for example, testing in schools where the younger age groups are tested more often than average.

However, the focus of this paper was to consider the implications of the LCT in a broader context than the application to a particular disease. A review of the literature revealed that the statements made in other publications are not universally valid.

\section{Conclusion}
In this paper, we used the LCT to propose an age-resolved model that includes detailed infection states and allows for Erlang distributed stay time distributions. The proposed LCT model is a generalization of a simple ODE model, wherein the assumption of an exponentially distributed stay time is replaced with a more realistic one. We furthermore analyzed several properties related to the LCT model. To demonstrate the importance of the distribution assumption, a collection of numerical experiments was conducted. 

Our analyses indicate that the LCT model naturally incorporates a \MK{delay or} lag time between a change in the contact rate and a corresponding change in the number of daily new transmissions, as well as in the Carrier and Infected compartment sizes. We observed that an increase in the number of subcompartments leads to a longer \MK{delay}. One notable outcome is that the low variance of the stay times in models employing a high number of subcompartments may result in the emergence of wave patterns. Moreover, \MK{for our particular SECIR-type model,} we found that the comparison of the timing and size of epidemic peaks for varying numbers of subcompartments is significantly influenced by the effective reproduction number and, consequently, by the selected parameters. While we numerically showed that the final size of an epidemic is not affected by the assumption of different numbers of subcompartments, simple ODE-based models can lead to overly optimistic or pessimistic predictions of the epidemic peaks as well as predict the true peak time too early or too late such that no general statement is possible. \MK{However, let us note that the particular influences of parameters and effective reproduction number depend on the particular model and model type and that the magnitudes of the influence might differ for other models.}

We observed that the inclusion of age resolution enhances the accuracy of the simulations, enabling a more precise representation of the observed dynamics due to the incorporation of age-dependent parameters and initialization. In a scenario inspired by COVID-19, \LP{we showed how different LCT models can be used to make predictions for realistic applications.} It was demonstrated that the run time per step increases linearly with the number of subcompartments. Hence, the application of LCT models does not result in extensive additional costs. In our numerical results, we furthermore found that time-to-solution for an adaptive Runge-Kutta scheme scales quadratically with the number of subcompartments. 

All in all, we have seen that applying the LCT to obtain Erlang distributed stay times \MK{can lead} to widely different simulation results. \MK{In this paper, we demonstrated the complex relationship for a SECIR-type model. While it is unclear to which extent these findings directly apply to other models and model types, it might be overly optimistic to expect a simple relation between the epidemic peak size and timings and the number of subcompartments.} Therefore, when using mathematical modeling, one should pay careful attention to the underlying model assumptions as well as to the choice of parameters. 

\MK{In addition, a key contribution to the submitted paper is the modular software code for simple ODE and flexible LCT models that we provide. The modular structure of the code allows to extend non-age-resolved models with any age stratification with just two additional lines or parameters. Similar to age stratification, the number of subcompartments can be chosen flexible for any compartment and all choices made by the user are evaluated on compile-time such that the resulting C++ code runs highly efficient on consumer laptops and cluster infrastructure. Here, we simulated $16\,384$ runs and computed summary statistics in only $16$ seconds with 168 compute cores. Using the three particular compute nodes, we obtained a speedup of approximately $90.7$.}

The findings can be easily applied to epidemiological models for other infectious diseases such as Ebola, see~\cite{wang_evaluations_2017}. The concept of the LCT can be applied to other models based on ODEs with other applications, such as population dynamics~\cite{macdonald_time_1978}. Therefore, the results presented herein are not limited to our infectious disease model, but have broader applicability. 

\section*{Acknowledgements}
This work was supported by the Initiative and Networking Fund of the Helmholtz Association (grant agreement number KA1-Co-08, Project LOKI-Pandemics) and by the German Federal Ministry for Digital and Transport under grant agreement FKZ19F2211A (Project PANDEMOS). It was furthermore supported by the German Federal Ministry of Education and Research under grant agreement 031L0297B (Project INSIDe) and the Deutsche Forschungsgemeinschaft (DFG, German Research Foundation) (grant agreement 528702961).

\section*{Competing interests}
\noindent The authors declare to not have any competing interests.

\section*{Data availability}
\noindent The MEmilio repository is publicly available under \url{https://github.com/SciCompMod/memilio}. All model functionality is available with MEmilio v1.3.0~\url{https://zenodo.org/records/14237545}\LP{. A detailed description on how to reproduce the simulation results presented in this paper, in addition to the complete set of plot files, is accessible at~\url{https://github.com/SciCompMod/memilio-simulations}}.

\section*{Author Contributions}
    \noindent\textbf{Conceptualization:}  Lena Plötzke, Martin Kühn\\
    \noindent\textbf{Data Curation:}  Lena Plötzke, Anna Wendler\\
    \noindent\textbf{Formal Analysis:}  Lena Plötzke, Anna Wendler, Martin Kühn\\
    \noindent\textbf{Funding Acquisition:} Martin Kühn\\
    \noindent\textbf{Investigation:} Lena Plötzke, Anna Wendler, Martin Kühn\\
    \noindent\textbf{Methodology:} Lena Plötzke, Martin Kühn\\
    \noindent\textbf{Project Administration:} Martin Kühn\\
    \noindent\textbf{Resources:} Martin Kühn\\
    \noindent\textbf{Software:}  Lena Plötzke, Ren\'{e} Schmieding, Anna Wendler\\
    \noindent\textbf{Supervision:} Martin Kühn \\
    \noindent\textbf{Validation:} All authors \\
    \noindent\textbf{Visualization:} Lena Plötzke, Anna Wendler\\
    \noindent\textbf{Writing – Original Draft:} Lena Plötzke, Anna Wendler\\
    \noindent\textbf{Writing – Review \& Editing:} All authors
\bibliographystyle{elsarticle-num}
\biboptions{sort&compress}

\end{document}